\numberwithin{equation}{section}
\newcommand{\M}{\operatorname{M}}
\newcommand{\rar}{\longrightarrow}
\newcommand{\hrar}{\hookrightarrow}
\newcommand{\rarab}[1]{\overset{#1}{\longrightarrow}}
\newcommand{\al}{\alpha}
\newcommand{\be}{\beta}
\newcommand{\ga}{\gamma}
\newcommand{\Ga}{\Gamma}
\newcommand{\de}{\delta}
\newcommand{\la}{\lambda}
\newcommand{\La}{\Lambda}
\newcommand{\sg}{\sigma}
\newcommand{\om}{\omega}
\newcommand{\Sg}{\Sigma}
\newcommand{\bC}{{\mathbb C}}
\newcommand{\bN}{{\mathbb N}}
\newcommand{\bQ}{{\mathbb Q}}
\newcommand{\bR}{{\mathbb R}}
\newcommand{\bT}{{\mathbb T}}
\newcommand{\bZ}{{\mathbb Z}}
\newcommand{\cO}{{\mathcal O}}
\newcommand{\cP}{{\mathcal P}}
\newcommand{\caR}{{\mathcal R}}
\newcommand{\cS}{{\mathcal S}}
\newcommand{\cW}{{\mathcal W}}
\newcommand{\fA}{{\mathfrak A}}
\newcommand{\fp}{{\mathfrak p}}
\newcommand{\Span}{\operatorname{Span}}
\newcommand{\diag}{\operatorname{diag}}
\newcommand{\rank}{\operatorname{rank}}
\newcommand{\Gal}{\operatorname{Gal}}
\newcommand{\GL}{\operatorname{GL}}
\newcommand{\tr}{\operatorname{Tr}}
\newcommand{\sbr}{\smallbreak}
\newcommand{\indlim}[1]{\lim\limits_{\underset{N}{\longrightarrow}}}
\newtheorem{thm}{Theorem}[section]
\newtheorem{cor}[thm]{Corollary}
\newtheorem{lem}[thm]{Lemma}
\newtheorem{prop}[thm]{Proposition}
\newtheorem*{t:mainth0}{Theorem}
\theoremstyle{remark}
\newtheorem{rem}[thm]{Remark}
\newtheorem{defin}[thm]{Definition}
\newtheorem{example}{Example}
\newcommand{\bbe}{\begin{equation}}
\newcommand{\ee}{\end{equation}}
\title[A number theoretic classification of toroidal solenoids]{A number theoretic classification of toroidal solenoids}
\author{Maria Sabitova}
\date{\today}
\begin{document}


\begin{abstract}  
We classify toroidal solenoids defined by non-singular $n\times n$-matrices $A$ with integer coefficients by studying associated first \^Cech cohomology groups. 
In a previous work, we classified the groups in the case $n=2$ using generalized ideal classes in the splitting field of the characteristic polynomial of $A$. In this paper we explore the classification problem for an arbitrary $n$.
 \end{abstract}

\maketitle


\section{Introduction}\label{s:intro}
The goal of this paper is to classify toroidal solenoids defined by non-singular matrices with integer coefficients as introduced by M.~C.~McCord in 1965 \cite{m}. More precisely, let $\bT^n$ denote a torus considered as a quotient of  
$\bR^n$ by its subgroup $\bZ^n$. A matrix $A\in\operatorname{M}_n(\bZ)$ induces a map $A:\bT^n\rar \bT^n$,
$A\left([{\bf x}]\right)=[A{\bf x}]$, $[{\bf x}]\in\bT^n$, ${\bf x}\in\bR^n$. Consider the inverse system $(M_j,f_j)_{j\in\bN}$, where $f_j:M_{j+1}\rar M_{j}$,  
$M_j=\bT^n$ and $f_j=A$ for all $j\in\bN$.
The inverse limit $\mathcal{S}_A$ of the system is called a  ({\em toroidal}) {\em solenoid}. As a set, $\cS_A$ is a subset of 
$\prod_{j=1}^{\infty} M_j$ consisting of points $(z_j)\in\prod_{j=1}^{\infty} M_j$ such that $z_j\in M_j$ and $f_j(z_{j+1})=z_{j}$ for $\forall j\in\bN$, {\it i.e.},
$$
\cS_A=\left\{(z_j)\in\prod_{j=1}^{\infty}\bT^n\,\,\Big\vert\,\,z_j\in\bT^n,\,\, A(z_{j+1})=z_j,\,\,j\in\bN \right\}.
$$
Endowed with the natural group structure and the induced topology from the Tychonoff (product) topology on $\prod_{j=1}^{\infty} \bT^n$, $\cS_A$ is an  
$n$-dimensional topological abelian group. It is compact, metrizable, and connected, but not locally connected and not path connected. Toroidal solenoids are examples of inverse limit dynamical systems. When $n=1$ and $A=d$, $d\in\bZ$, solenoids  are called $d$-{\it adic solenoids} or {\it Vietoris solenoids}. The first examples were studied by L. Vietoris in 1927 for $d=2$
\cite{v} and later in 1930 by van Dantzig for an arbitrary $d$ \cite{d}. The problem of classifying  
toroidal solenoids (up to homeomorphisms) has been studied extensively based on their topological invariants and holonomy pseudogroup actions (see {\it e.g.}, \cite{c}  
and \cite{bl}). In \cite{s} and the present work, we employ a number-theoretic approach to solving the problem.

\sbr

It is known that the first \^Cech cohomology group $H^1(\cS_A,\bZ)$ of $\cS_A$ is isomorphic to a subgroup $G_{A^t}$ of $\bQ^n$ defined by the transpose $A^t$ of $A$ as follows:
$$
G_{A^t}=\left.\left\{(A^t)^{-k}{\bf x}\,\,\right\vert\, {\bf x}\in\bZ^n,\,k\in\bZ \right\}.
$$
On the other hand, since $\cS_A$ is a compact connected abelian group, $H^1(\cS_A,\bZ)$ is isomorphic to the character group $\widehat{\cS_A}$ of $\cS_A$. Thus, for a non-singular $B\in\M_n(\bZ)$, using Pontryagin duality theorem, we see that $\cS_A$, $\cS_B$ are isomorphic as topological groups if and only if 
$G_{A^t}$, $G_{B^t}$ are isomorphic as abstract groups. Therefore, we study isomorphism classes of groups of the form $G_A$, where 
$A\in\M_n(\bZ)$ is non-singular. 

\sbr

If $n=1$, we have $A,B\in\bZ$ and 
$G_{A}$, $G_{B}$ are isomorphic if and only if $A$, $B$ have the same prime divisors. Note that if $A$, $B$ are conjugate by a matrix in $\GL_n(\bZ)$, then clearly $G_A$, $G_B$ are isomorphic (notationally, $G_A\cong G_B$). However, the converse is not true. In general, the class of matrices $A,B\in\M_n(\bZ)$ with isomorphic groups $G_A$, $G_B$ is much larger than the class of $\GL_n(\bZ)$-conjugate matrices. We have an example, where given an irreducible polynomial $h\in\bZ[x]$, there are three $\GL_2(\bZ)$-conjugacy classes of matrices with integer coefficients and characteristic polynomial $h$, 
but all three classes constitute just one class of isomorphic groups of the form $G_A$ \cite[Example 4]{s}.  It might also happen that $G_A\cong G_B$, but $A$, $B$ do not even share the same characteristic polynomial, so that $A$, $B$ are not conjugate by a matrix in 
$\GL_n(\bQ)$ (see {\it e.g.}, \cite[Example 2]{s}). In \cite{s} we classified groups $G_A$ in the case $n=2$. In the generic case, {\em i.e.}, when the characteristic polynomial of $A$ is irreducible, we linked $G_A$ to 
a generalized ideal class generated by an eigenvector of $A$ in the splitting field of the characteristic polynomial of $A$. We showed that if $G_A\cong G_B$, then the characteristic polynomials of $A$, $B$ share the same splitting field and, essentially, $G_A$ and $G_B$ are isomorphic if and only if the corresponding ideal classes are multiples of each other. 
It turns out that this is no longer true when $n>2$. In this paper, we finish the classification of groups $G_A$ (and hence, the associated toroidal solenoids 
$\cS_A$) for an arbitrary $n$. We provide necessary and sufficient conditions for 
$G_A\cong G_B$ for any $A,B\in\M_n(\bZ)$ and consider special cases as well. In particular, we formulate sufficient conditions under which $G_A\cong G_B$ if and only if the corresponding ideal classes are multiples of each other. We give examples illustrating how our theorems can be used to check whether $G_A\cong G_B$ for given $A,B\in\M_n(\bZ)$ in practice. 
We also consider applications of the obtained results to the  class of $\bZ^n$-odometers defined by matrices $A\in\M_n(\bZ)$. 

\noindent

\textbf{Acknowledgements.} The author thanks Mario Bonk for suggesting the problem and useful discussions. Support for this project was provided by PSC-CUNY Awards TRADA-51-133,  TRADB-53-92 jointly funded by The Professional Staff Congress and The City University of New York.

\section{Localization}
For a non-singular $n\times n$-matrix $A$ with integer coefficients, $A\in\operatorname{M}_n(\bZ)$, define
\bbe\label{eq:opr}
G_A=\left.\left\{A^{-k}{\bf x}\,\right\vert\, {\bf x}\in\bZ^n,\,k\in\bZ \right\},\quad
\bZ^n\subseteq G_A\subseteq \bQ^n.
\ee
One can readily check that $G_A$ is a subgroup of $\bQ^n$.

\sbr

For a prime $p\in\bN$ denote
$$
\bZ_{(p)}=\left.\left\{\frac{m}{n}\in\bQ \,\, \right\vert \, m,n\in\bZ,\,\,n\ne 0,\,\,(p,n)=1\right\},
$$
a subring of $\bQ$. (Here $(p,n)$ denotes the greatest common divisor of $p$ and $n$.) Let $\bQ_p$ denote the field of $p$-adic numbers with the subgring of $p$-adic integers $\bZ_p$. 
For $N=\det A$, $N\in \bZ$, $N\ne 0$, let
\bbe\label{eq:rrr}
\caR=\bZ\left[\frac{1}{N} \right]=\left\{\frac{m}{N^k}\,\,\Big\vert\,\,m,k\in\bZ  \right\}
\ee
be the ring of $N$-adic rationals.

\begin{rem}\label{r:1}
Note that $G_A$ is a (additive)  subgroup of 
$\caR^n$,
since $A^{-k}=\frac{1}{(\det A)^k}\tilde A$, $k\in\bN$, with $\tilde A\in\M_n(\bZ)$. 
However, $G_A\ne\caR^n$ in general.
\end{rem}

\begin{lem}\label{lem:f1} For a prime $p\in\bN$ denote
$G_{A,p}=G_A\otimes_{\bZ}\bZ_{(p)}$. Then
$$
G_A=\bigcap_{p}G_{A,p}=\caR^n\bigcap_{p\,\vert\det A}G_{A,p}.
$$
Here $G_{A,p}$ is considered as a subset of $\bQ^n$.
\end{lem}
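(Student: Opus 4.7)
The plan is to handle the two equalities separately. For the first equality $G_A=\bigcap_p G_{A,p}$, the inclusion $\subseteq$ is immediate from the fact that the natural map $G_A\to G_A\otimes_{\bZ}\bZ_{(p)}=G_{A,p}$ is injective (it embeds into $\bQ^n$ since $G_A$ is torsion-free and $\bZ_{(p)}\subseteq\bQ$). Everything interesting is in the reverse inclusion, which I would split into two clean steps.

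First I would show that every $x\in\bigcap_p G_{A,p}$ already lies in $\caR^n$. Since $G_A\subseteq\caR^n=\bZ[1/N]^n$ by Remark~\ref{r:1} and $\bZ[1/N]\subseteq\bZ_{(p)}$ whenever $p\nmid N$, tensoring gives $G_{A,p}\subseteq\bZ_{(p)}^n$ for every $p\nmid N$. Intersecting these inclusions over all $p\nmid N$ yields $x\in\bigcap_{p\nmid N}\bZ_{(p)}^n=\caR^n$, because $\bigcap_{p\nmid N}\bZ_{(p)}=\bZ[1/N]=\caR$ inside $\bQ$.

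Next I would exploit the primary decomposition of the torsion group $\caR^n/\bZ^n=\bigoplus_{p\mid N}(\bZ[1/p]/\bZ)^n$ and its subgroup $G_A/\bZ^n=\bigoplus_{p\mid N}(G_A/\bZ^n)_p$. Given $x\in\bigcap_p G_{A,p}$ and a prime $p\mid N$, the hypothesis furnishes $m_p\in\bZ$ with $\gcd(m_p,p)=1$ and $m_p x\in G_A$. Projecting modulo $\bZ^n$ to the $p$-primary component and using that multiplication by $m_p$ is an automorphism of the $p$-group $(\bZ[1/p]/\bZ)^n$, I obtain that the $p$-primary piece $\bar{x}_p$ of $\bar{x}\in\caR^n/\bZ^n$ lies in $(G_A/\bZ^n)_p$. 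Summing over $p\mid N$ places $\bar{x}$ in $G_A/\bZ^n$, and since $\bZ^n\subseteq G_A$, this gives $x\in G_A$, completing the first equality.

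For the second equality, the inclusion $\subseteq$ is immediate from the first equality, since $\bigcap_p G_{A,p}=G_A\subseteq\caR^n$. For $\supseteq$, let $x\in\caR^n\cap\bigcap_{p\mid N}G_{A,p}$; it suffices to verify $x\in G_{A,p}$ for $p\nmid N$. Writing $x=y/N^k$ with $y\in\bZ^n\subseteq G_A$ and $k\ge 0$, the denominator $N^k$ is coprime to $p$, so $x\in G_A\otimes\bZ_{(p)}=G_{A,p}$, as desired.

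The main obstacle, such as it is, is the bookkeeping in the primary-decomposition step: one has to keep track of which primes can contribute denominators (only those dividing $N$) and verify that multiplication by an integer prime to $p$ really is invertible on the $p$-primary summand. None of the individual steps is deep; the local-to-global argument for subgroups of $\bQ^n$ containing $\bZ^n$ is standard, and the role of $\caR$ is simply to record that only primes dividing $N$ can appear in denominators.
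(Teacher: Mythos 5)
Your proof is correct, and it takes a genuinely different route from the paper on both counts. For the first equality $G_A=\bigcap_p G_{A,p}$, the paper simply cites Fuchs (\cite[Lemma 93.1]{f2}) for the general statement about torsion-free abelian groups of countable rank, whereas you reprove it from scratch by first pinning $x$ down in $\caR^n$ via the primes $p\nmid N$ (using $\bigcap_{p\nmid N}\bZ_{(p)}=\caR$) and then invoking the primary decomposition of the torsion group $\caR^n/\bZ^n=\bigoplus_{p\mid N}(\bZ[1/p]/\bZ)^n$ together with the fact that multiplication by an integer prime to $p$ is an automorphism of any subgroup of a $p$-group. For the nontrivial inclusion of the second equality, the paper runs a B\'ezout-style argument: it produces $N^k$ with $N^kx\in\bZ^n$ and, for each $p_i\mid N$, an $s_i$ prime to $p_i$ with $s_ix\in G_A$, then uses $\gcd(N^k,s_1,\ldots,s_l)=1$ to write $x$ as a $\bZ$-combination of elements of $G_A$. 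You instead deduce $\supseteq$ by reducing to the already-established first equality: for $p\nmid N$ the denominator $N^k$ of $x\in\caR^n$ is a unit in $\bZ_{(p)}$, so $x\in G_{A,p}$ automatically. Your version is more self-contained and makes the role of the primary decomposition transparent; the paper's version is shorter by outsourcing the first equality and by avoiding the explicit primary-component bookkeeping.

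One small point worth spelling out in your Step 2: when you say multiplication by $m_p$ is an automorphism of the $p$-group and conclude $\bar x_p\in(G_A/\bZ^n)_p$, the chain is that $m_p\bar x_p\in(G_A/\bZ^n)_p$, that multiplication by $m_p$ is already surjective on the subgroup $(G_A/\bZ^n)_p$ (choose a local inverse of $m_p$ modulo the order of each element), and that it is injective on the ambient $p$-group, so $\bar x_p$ is the unique preimage and hence lies in $(G_A/\bZ^n)_p$. You gesture at this, but it deserves a sentence since surjectivity of an endomorphism on a subgroup is not automatic from surjectivity on the ambient group.
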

\begin{proof}
See \cite[p. 183, Lemma 93.1]{f2} for the first equality, which holds for any abelian subgroup of $\bQ^n$ and, more generally, for an abelian torsion free group of at most countable rank. Hence, taking into account Remark \ref{r:1}, we have $G_A\subseteq\caR^n\bigcap_{p\,\vert\det A}G_{A,p}$. The opposite inclusion 
is proved as in {\em loc.cit}. Namely, let $x\in\caR^n\bigcap_{p\,\vert\det A}G_{A,p}$. Then
$$
x=\sum x_ia_i,\quad x_i\in \bZ_{(p)},\,a_i\in G_A,
$$
and there exists $s\in\bZ$ coprime with $p$ such that $sx\in G_A.$ Since $x\in\caR^n$, there exists a power of $N$ such that $N^kx\in\bZ^n$, $k\in\bN$, $N=\det A$. Let $p_1,p_2,\ldots,p_l\in\bN$ be all the prime divisors of $N$. Since $x\in \bigcap_{p\,\vert\det A}G_{A,p}$, by above, for each $p_i$ there exists $s_i\in\bZ$ coprime with $p_i$ such that $s_ix\in G_A$. Since 
$N^k,s_1,s_2,\ldots,s_l$ are coprime and $\bZ^n\subset G_A$, we have $x\in G_A$. 
\end{proof}

For a prime $p\in\bN$ denote
$\overline{G}_{A,p}=G_A\otimes_{\bZ}\bZ_p$. Naturally, $\bZ_p^n\subseteq \overline{G}_{A,p}\subseteq \bQ_p^n$.

\begin{lem}\label{lem:f2} Let
$\overline{G}_{A,p}=G_A\otimes_{\bZ}\bZ_p$, 
$G_{A,p}=G_A\otimes_{\bZ}\bZ_{(p)}$. Then
$$
\bQ^n\cap \overline{G}_{A,p}=G_{A,p},
$$
where $\bQ^n\hrar\bQ_p^n$, and the intersection is in $\bQ_p^n$.
\end{lem}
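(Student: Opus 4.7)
The easy direction, $G_{A,p}\subseteq \bQ^n\cap\overline{G}_{A,p}$, is immediate from the inclusions $\bZ_{(p)}\hookrightarrow\bZ_p$ and $G_{A,p}\hookrightarrow\bQ^n$ (the latter valid because $G_A$ is torsion free, by Lemma~\ref{lem:f1}); the real content lies in the reverse inclusion.

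My plan is first to isolate two elementary facts about the embedding $\bQ\hookrightarrow\bQ_p$: (i) $\bZ_p\cap\bQ=\bZ_{(p)}$, since a rational is a $p$-adic integer if and only if its denominator in lowest terms is coprime to $p$; and (ii) $\bZ_p+\bQ=\bQ_p$, since truncating the $p$-adic expansion of any $\alpha\in\bQ_p$ at degree $0$ produces an element of $\bZ[1/p]\subset\bQ$. Together these yield a canonical isomorphism $\bQ^n/\bZ_{(p)}^n\cong\bQ_p^n/\bZ_p^n$ induced by $\bQ^n\hookrightarrow\bQ_p^n$.

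Next I would apply flatness of $\bZ_{(p)}$ and $\bZ_p$ over $\bZ$ to the short exact sequence $0\to\bZ^n\to G_A\to G_A/\bZ^n\to 0$, obtaining
\[
0\to \bZ_{(p)}^n\to G_{A,p}\to (G_A/\bZ^n)\otimes_\bZ\bZ_{(p)}\to 0, \qquad 0\to \bZ_p^n\to \overline{G}_{A,p}\to (G_A/\bZ^n)\otimes_\bZ\bZ_p\to 0.
\]
Since $G_A/\bZ^n\subseteq (\bQ/\bZ)^n$ is a torsion group, and tensoring with either $\bZ_{(p)}$ or $\bZ_p$ annihilates the $\ell$-primary components for $\ell\ne p$ while preserving the $p$-primary part, both right-hand terms are canonically identified with the $p$-primary subgroup $(G_A/\bZ^n)_p$, regarded as a subgroup of $(\bQ/\bZ_{(p)})^n\cong(\bQ_p/\bZ_p)^n$ via the isomorphism from the previous paragraph.

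Given $x\in\bQ^n\cap\overline{G}_{A,p}$, I would then chase $x$ through the resulting commutative diagram: its class in $\bQ_p^n/\bZ_p^n$ lies in $(G_A/\bZ^n)_p$ by the second sequence, and because $x\in\bQ^n$ this class matches the image of $x$ in $\bQ^n/\bZ_{(p)}^n$ under the identification $\bQ^n/\bZ_{(p)}^n\cong\bQ_p^n/\bZ_p^n$. Lifting through the first sequence produces $y\in G_{A,p}$ with $x-y\in\bZ_{(p)}^n\subseteq G_{A,p}$, so $x\in G_{A,p}$. The step I expect to be the main obstacle is precisely this compatibility: verifying that the two identifications of the quotients with $(G_A/\bZ^n)_p$ are natural with respect to $\bQ^n/\bZ_{(p)}^n\cong\bQ_p^n/\bZ_p^n$. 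Once that naturality is established, the lift of $y$ and the conclusion are automatic.
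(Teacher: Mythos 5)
Your proof is correct, but its character is quite different from the paper's, which gives no argument at all: the paper simply invokes \cite[p.~183, Lemma~93.2]{f2} (and \cite{d}) for the general fact that $(G\otimes_\bZ\bQ)\cap(G\otimes_\bZ\bZ_p)=G\otimes_\bZ\bZ_{(p)}$ for any torsion-free abelian $G$ of at most countable rank, then specializes to $G=G_A$. You instead construct a self-contained argument: flatness of $\bZ_{(p)}$ and $\bZ_p$ over $\bZ$ applied to $0\to\bZ^n\to G_A\to G_A/\bZ^n\to 0$, the isomorphism $\bQ^n/\bZ_{(p)}^n\cong\bQ_p^n/\bZ_p^n$ furnished by $\bZ_p\cap\bQ=\bZ_{(p)}$ and $\bZ_p+\bQ=\bQ_p$, and the identification of both $(G_A/\bZ^n)\otimes\bZ_{(p)}$ and $(G_A/\bZ^n)\otimes\bZ_p$ with the $p$-primary component $(G_A/\bZ^n)_p$. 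The naturality you flag at the end is indeed the one thing that deserves a sentence, but it is routine: both identifications arise from the canonical maps $t\mapsto t\otimes1$ restricted to $(G_A/\bZ^n)_p$, and $\bQ^n/\bZ_{(p)}^n\cong\bQ_p^n/\bZ_p^n$ is precisely the map induced by $\bQ^n\hookrightarrow\bQ_p^n$, so the relevant square commutes because every arrow in it is induced by $\bZ_{(p)}\hookrightarrow\bZ_p$ and $G_A\hookrightarrow\bQ^n$. (One small inaccuracy: torsion-freeness of $G_A$ follows from $G_A\subseteq\bQ^n$, not from Lemma~\ref{lem:f1}, which merely uses it.) What your route buys is transparency---it exhibits the role of $p$-primary decomposition explicitly---whereas the paper buys brevity by treating the statement as a known fact about countable-rank torsion-free groups.
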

\begin{proof}
See \cite[p. 183, Lemma 93.2]{f2}, \cite{d}. It is proved there that if $G$ is an abelian torsion free group of at most countable rank, then 
$$
(G\otimes_{\bZ}\bQ)\cap (G\otimes_{\bZ}\bZ_p)=G\otimes_{\bZ}\bZ_{(p)}.
$$
Apply the result to $G=G_A$ and note that $G_A\otimes_{\bZ}\bQ=\bQ^n$.
\end{proof}

\begin{cor}\label{cor:loc} 
$$
G_A=\bigcap_{p}\,(\bQ^n\cap\overline{G}_{A,p})=\bigcap_{p\,\vert\det A}(\caR^n\cap\overline{G}_{A,p}).
$$
\end{cor}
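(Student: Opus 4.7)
The plan is to obtain Corollary \ref{cor:loc} as an immediate combination of Lemmas \ref{lem:f1} and \ref{lem:f2}, with one small bookkeeping step to reconcile the intersection with $\caR^n$.

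First I would use Lemma \ref{lem:f2} to rewrite $G_{A,p}$ as $\bQ^n\cap\overline{G}_{A,p}$ (the intersection taken inside $\bQ_p^n$ via the natural embedding $\bQ^n\hookrightarrow\bQ_p^n$). Substituting this identity into the first equality of Lemma \ref{lem:f1} yields
$$
G_A=\bigcap_{p}G_{A,p}=\bigcap_{p}\,(\bQ^n\cap\overline{G}_{A,p}),
$$
which is the first equality in the corollary.

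For the second equality I would start from the other form provided by Lemma \ref{lem:f1}, namely $G_A=\caR^n\cap\bigcap_{p\,\vert\det A}G_{A,p}$. Replacing each $G_{A,p}$ by $\bQ^n\cap\overline{G}_{A,p}$ and then using that $\caR^n\subseteq\bQ^n$ (so $\caR^n\cap\bQ^n=\caR^n$), I would collapse
$$
\caR^n\cap\bigcap_{p\,\vert\det A}\bigl(\bQ^n\cap\overline{G}_{A,p}\bigr)=\bigcap_{p\,\vert\det A}\bigl(\caR^n\cap\overline{G}_{A,p}\bigr),
$$
which gives precisely the desired expression.

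There is no real obstacle here: the content of the corollary is entirely contained in the two preceding lemmas, and the only thing to verify is the trivial set-theoretic identity allowing $\caR^n$ to absorb the factor $\bQ^n$. I would therefore keep the write-up to a few lines, simply invoking Lemmas \ref{lem:f1} and \ref{lem:f2} in that order.
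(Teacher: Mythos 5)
Your proposal is correct and matches the paper's proof, which simply cites Lemmas \ref{lem:f1} and \ref{lem:f2}; you have just spelled out the routine substitution and the $\caR^n\cap\bQ^n=\caR^n$ absorption step.
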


\begin{proof}
Follows from Lemma \ref{lem:f1} and Lemma \ref{lem:f2}.
\end{proof}

\begin{prop}\cite[Prop. 3.8]{s}\label{prop:1} 
Let $A\in\M_n(\bZ)$ be non-singular, let $h_A\in\bZ[x]$ be the characteristic polynomial of $A$, and let $p\in\bZ$ be prime. 
Let $t_p=t_p(A)$ denote the multiplicity of zero in the reduction of $h_A$ modulo $p$, $0\leq t_p\leq n$. Then, as $\bZ_p$-modules,
$$\overline{G}_{A,p}\cong \bQ^{t_p}_p\oplus\bZ^{n-t_p}_p.$$
In particular,
\begin{enumerate}[$(1)$]
\item $p$ does not divide $\det A$ if and only if $\overline{G}_{A,p}= \bZ_p^n;$

\item $h_A\equiv x^n\,(\text{mod }p)$ if and only if $\overline{G}_{A,p}= \bQ_p^n$. 
 
\end{enumerate}
\end{prop}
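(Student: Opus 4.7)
\medskip

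The plan is to decompose $\bZ_p^n$, viewed as a $\bZ_p[x]$-module via $A$, into an $A$-invariant direct sum that separates the ``singular'' and ``non-singular'' behavior modulo $p$. Reducing the characteristic polynomial modulo $p$ gives $\bar h_A = x^{t_p}\bar g(x)$ in $\bF_p[x]$ with $\bar g(0)\neq 0$, so the factors $x^{t_p}$ and $\bar g$ are coprime. By Hensel's lemma, this lifts to a factorization $h_A = f\cdot g$ in $\bZ_p[x]$ with $f\equiv x^{t_p}\,(\bmod\,p)$, $g\equiv \bar g\,(\bmod\,p)$, and $f,g$ coprime in $\bZ_p[x]$. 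Writing $uf+vg=1$ in $\bZ_p[x]$ and using Cayley--Hamilton together with the relation $f(A)g(A)=0$, one obtains an $A$-invariant decomposition
$$
\bZ_p^n = M_1\oplus M_2,\qquad M_1=\ker f(A),\quad M_2=\ker g(A),
$$
where $M_1,M_2$ are free $\bZ_p$-modules of ranks $t_p$ and $n-t_p$, and the restrictions of $A$ have characteristic polynomials $f$ and $g$ respectively.

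Now observe that $\overline{G}_{A,p} = G_A\otimes_\bZ\bZ_p = \sum_{k\geq 0} A^{-k}\bZ_p^n$ inside $\bQ_p^n$, and the decomposition above is preserved under powers of $A^{\pm 1}$, so
$$
\overline{G}_{A,p} = \Bigl(\sum_{k\geq 0} A^{-k}M_1\Bigr)\;\oplus\;\Bigl(\sum_{k\geq 0}A^{-k}M_2\Bigr).
$$
On $M_2$, the determinant of $A$ equals $\pm g(0)$, which is a $p$-adic unit, so $A$ acts as a $\bZ_p$-linear automorphism of $M_2$; hence $A^{-k}M_2 = M_2$ for every $k$, contributing the free summand $\bZ_p^{n-t_p}$. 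On $M_1$, the relation $f(A)|_{M_1}=0$ with $f(x)\equiv x^{t_p}\,(\bmod\,p)$ yields $A^{t_p}|_{M_1}\in p\cdot\End_{\bZ_p}(M_1)$, so by iteration $A^{kt_p}(M_1)\subseteq p^k M_1$. Given any $v\in M_1\otimes_{\bZ_p}\bQ_p$, write $v = p^{-m}u$ with $u\in M_1$; then $A^{mt_p}v\in M_1$, so $v\in A^{-mt_p}M_1$. This shows $\sum_{k\geq 0}A^{-k}M_1 = M_1\otimes_{\bZ_p}\bQ_p\cong\bQ_p^{t_p}$, which combined with the $M_2$-part yields the desired isomorphism $\overline{G}_{A,p}\cong\bQ_p^{t_p}\oplus\bZ_p^{n-t_p}$. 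The two special cases then fall out: $p\nmid\det A$ forces $f=1$, $t_p=0$, giving $\overline{G}_{A,p}=\bZ_p^n$; and $h_A\equiv x^n\,(\bmod\,p)$ forces $g=1$, $t_p=n$, giving $\overline{G}_{A,p}=\bQ_p^n$.

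The main technical hinge is the Hensel lift and the resulting $A$-invariant $\bZ_p$-module decomposition; once that is in place, the nilpotency-mod-$p$ of $A|_{M_1}$ together with invertibility of $A|_{M_2}$ makes the identification of each summand essentially a direct computation. The most likely place for a subtle error is verifying that the factorization is actually $A$-invariant at the level of $\bZ_p^n$ rather than only $\bZ_p^n\otimes\bQ_p$; this requires the coprimality $uf+vg=1$ to hold in $\bZ_p[x]$, not just $\bQ_p[x]$, which is why Hensel's lemma (with strong coprimality of the reductions) is invoked rather than simply factoring over $\bQ_p$.
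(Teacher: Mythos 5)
Your proof is correct. The route you take — Hensel-lifting $\bar h_A = x^{t_p}\bar g$ to a factorization $h_A = fg$ over $\bZ_p$ with $f,g$ comaximal, and then deducing an $A$-stable direct-sum decomposition $\bZ_p^n = M_1\oplus M_2$ from the induced idempotents in $\bZ_p[A]$ — is cleaner than the machinery this paper uses internally for the same conclusion. The proposition itself is cited from \cite{s} without proof, but the closely related Lemma~\ref{l:repr} obtains the decomposition by conjugating $A$ over $\bZ_p$ into a block \emph{upper}-triangular form via Theorem~\ref{th:trig} (Smith normal form applied to a matrix built from an eigenvector), with $A_1$ satisfying $h_{A_1}\equiv x^{t_p}\pmod p$ and $A_2\in\GL_{n-t_p}(\bZ_p)$; that approach leaves a nonzero off-diagonal block, and one must separately check it does not disturb the identification of $\overline G_{A,p}$, a step your block-diagonal splitting avoids entirely. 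Two small remarks. First, the comaximality $uf+vg=1$ over $\bZ_p[x]$, which you rightly flag as the hinge, deserves its one-line justification: since $f$ is monic, $M=\bZ_p[x]/(f,g)$ is a finitely generated $\bZ_p$-module with $M/pM=\bF_p[x]/(\bar f,\bar g)=0$, so $M=0$ by Nakayama. Second, you only state the forward direction of each equivalence in (1)--(2); the converses are immediate because, by the main isomorphism, the rank of the divisible part of $\overline G_{A,p}$ recovers $t_p$, so $\overline G_{A,p}=\bZ_p^n$ forces $t_p=0$ (hence $p\nmid h_A(0)=\pm\det A$) and $\overline G_{A,p}=\bQ_p^n$ forces $t_p=n$.
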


Thus,
\bbe\label{eq:decom}
\overline{G}_{A,p}=D_p(A)\oplus R_p(A),
\ee
where $D_p(A)\cong \bQ_p^{t_p}$ denotes a divisible part of $\overline{G}_{A,p}$
and $R_p(A)\cong \bZ_p^{n-t_p}$ denotes a reduced $\bZ_p$-submodule of
$\overline{G}_{A,p}$.
Let
$$
\det A=ap_1^{s_1}p_2^{s_2}\cdots p_l^{s_l}
$$
be the prime-power factorization of $\det A$, where $p_1,p_2,\ldots,p_l\in\bN$ are distinct primes, $a=\pm 1$, and $s_1,s_2,\ldots,s_l\in\bN$. Let 
$$
\cP=\cP(A)=\{p_1,p_2,\ldots,p_l\}.
$$
The case $\cP=\emptyset$, equivalently, $A\in\GL_n(\bZ)$, has been settled as follows:
\begin{lem}\cite[Lemma 3.2]{s}\label{lem:det} Let $A,B\in\M_n(\bZ)$ be non-singular.
\begin{itemize}

\item[$(i)$] Assume $A\in\GL_n(\bZ)$. 
Then $G_A\cong G_B$ if and only if $B\in\GL_n(\bZ)$  if and only if $G_A= G_B=\bZ^n$.

\item[$(ii)$] Let $G_A\cong G_B$ and $A\not\in\GL_n(\bZ)$, {\it i.e.}, $\det A\ne\pm 1$. Then $\det B\ne\pm 1$ and $\det A,$ $\det B$ have the same prime divisors $($in $\bZ$$)$. 
\end{itemize}
\end{lem}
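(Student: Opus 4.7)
The plan is to reduce the lemma to a single mod-$p$ calculation, namely the computation of the finite group $G_A/pG_A$, using Proposition \ref{prop:1}. The key observation I would establish first is that for every prime $p$,
\[
G_A/pG_A \;=\; G_A\otimes_\bZ \bF_p \;=\; (G_A\otimes_\bZ\bZ_p)\otimes_{\bZ_p}\bF_p \;=\; \overline{G}_{A,p}\otimes_{\bZ_p}\bF_p,
\]
and since $\bQ_p\otimes_{\bZ_p}\bF_p = 0$ while $\bZ_p\otimes_{\bZ_p}\bF_p = \bF_p$, Proposition \ref{prop:1} then yields $G_A/pG_A\cong \bF_p^{\,n-t_p(A)}$. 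The cardinal $|G_A/pG_A|=p^{\,n-t_p(A)}$ depends only on the abelian group isomorphism type of $G_A$, and by Proposition \ref{prop:1}(1) it equals $p^n$ exactly when $t_p(A)=0$, i.e., exactly when $p\nmid\det A$.

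With this invariant in hand, I would first dispatch part (i). If $A\in\GL_n(\bZ)$, then $\det A=\pm 1$ has no prime divisors, so Proposition \ref{prop:1}(1) gives $\overline{G}_{A,p}=\bZ_p^n$ for every prime $p$, and Corollary \ref{cor:loc} collapses to $G_A=\bigcap_p(\bQ^n\cap\bZ_p^n)=\bigcap_p\bZ_{(p)}^n=\bZ^n$. Conversely, if $G_A\cong G_B$ while $A\in\GL_n(\bZ)$, then $|G_B/pG_B|=|G_A/pG_A|=p^n$ for every prime $p$; the computation above forces $t_p(B)=0$ for all $p$, so $\det B=\pm 1$, whence $B\in\GL_n(\bZ)$ and $G_B=\bZ^n$. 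All three conditions in (i) are thus equivalent.

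Part (ii) will then fall out painlessly. Applying (i) with the roles of $A$ and $B$ swapped immediately gives $B\notin\GL_n(\bZ)$, i.e., $\det B\neq\pm 1$. For the coincidence of prime divisors, the abelian group isomorphism $G_A\cong G_B$ descends to $G_A/pG_A\cong G_B/pG_B$, so
\[
p^{\,n-t_p(A)} = |G_A/pG_A| = |G_B/pG_B| = p^{\,n-t_p(B)},
\]
whence $t_p(A)=t_p(B)$ for every prime $p$, and in particular $p\mid\det A\iff t_p(A)\ge 1\iff t_p(B)\ge 1\iff p\mid\det B$.

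I do not foresee any real obstacle. The only subtle point is the opening tensor-product identification $G_A/pG_A\cong\overline{G}_{A,p}\otimes_{\bZ_p}\bF_p$, which is routine associativity of tensor products together with the fact that $p$ is invertible in $\bQ_p$. After that, both parts reduce to comparing cardinalities of the finite abelian groups $G_A/pG_A$ and $G_B/pG_B$, which is pure bookkeeping.
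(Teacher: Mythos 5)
Your proof is correct, and I could find no gap in the argument. A point of context: the paper does not actually prove this lemma, but cites it as \cite[Lemma 3.2]{s}, so there is no internal proof to compare against line by line; what follows is a comparison of your approach to the surrounding machinery.

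Your route is to package the determinant information into the finite quotient $G_A/pG_A$, observing via base change that $G_A/pG_A\cong \overline{G}_{A,p}\otimes_{\bZ_p}\bF_p\cong\bF_p^{\,n-t_p(A)}$ by Proposition \ref{prop:1}. This is clean and has the virtue of turning the statement ``$\det A$ and $\det B$ have the same prime divisors'' into the isomorphism-invariant assertion $|G_A/pG_A|=|G_B/pG_B|$ for all $p$, from which both (i) and (ii) fall out with no further work. The identification $\bQ^n\cap\bZ_p^n=\bZ_{(p)}^n$ and $\bigcap_p\bZ_{(p)}^n=\bZ^n$ for the forward direction of (i) is also the natural thing to do given Corollary \ref{cor:loc}. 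One logical caveat worth keeping in mind if you were to insert this proof directly into the cited source: Proposition \ref{prop:1} is itself \cite[Prop.\ 3.8]{s}, which appears \emph{after} \cite[Lemma 3.2]{s} in that paper, so in the original numbering your argument would need a check against circularity; within the exposition of the present paper, where both results are imported as standing facts, this is not an issue. A more elementary proof that avoids the local structure theorem entirely is available — one shows directly from the definition $G_A=\{A^{-k}\mathbf{x}\}$ that $p\nmid\det A$ iff multiplication by $p$ is surjective on $G_A$ — but your argument is tighter given the tools already on the table.
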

Therefore, for the rest of the paper we assume $\cP\ne\emptyset$.
 Denote
\begin{eqnarray*}
\cP'=\cP'(A)&=&\left\{p\in\cP,\,\,h_A\not\equiv x^n\,(\text{mod }p)\right\},
\end{eqnarray*}
where $h_A\in\bZ[x]$ denotes the characteristic polynomial of $A$. The case 
$\cP'=\emptyset$ has been settled as well.
\begin{lem}\cite[Lemma 3.10]{s}
\label{lem:tt}
Let $A,B\in\M_n(\bZ)$ be non-singular and let $h_A,h_B\in\bZ[x]$ be their respective characteristic polynomials. 
Assume that for any prime $p\in\bN$ that divides $\det A$ we have 
$$
h_A\equiv x^n\,(\text{mod }p).
$$
Then $G_A\cong G_B$ $($with $T=I_n$$)$ if and only if $\det A$, $\det B$ have the same prime divisors and for any prime $p\in\bZ$ that divides $\det B$ we have 
$
h_B\equiv x^n\,(\text{mod }p).
$
\end{lem}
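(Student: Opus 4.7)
The plan is to apply Corollary \ref{cor:loc} together with Proposition \ref{prop:1}(2) to identify $G_A$ explicitly as $\caR^n$ under the hypothesis on $h_A$; once this identification is in hand, both directions of the equivalence follow almost immediately and essentially symmetrically.

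First I would unpack the hypothesis. For every prime $p$ dividing $\det A$ the assumption $h_A\equiv x^n\pmod{p}$ combined with Proposition \ref{prop:1}(2) gives $\overline{G}_{A,p}=\bQ_p^n$. Since $\caR^n\subset\bQ^n$ is already contained in $\bQ_p^n$ via the diagonal embedding $\bQ\hookrightarrow\bQ_p$, the intersection $\caR^n\cap\overline{G}_{A,p}$ taken inside $\bQ_p^n$ is simply $\caR^n$. Feeding this into Corollary \ref{cor:loc} then yields
$$
G_A \;=\; \bigcap_{p\mid\det A}\bigl(\caR^n\cap\overline{G}_{A,p}\bigr) \;=\; \caR^n,
$$
where $\caR=\bZ[1/\det A]$.

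For the forward implication, interpret $G_A\cong G_B$ with $T=I_n$ as the equality $G_A=G_B$ inside $\bQ^n$. Lemma \ref{lem:det}(ii) ensures $\det A$ and $\det B$ share the same prime divisors, so $\bZ[1/\det A]=\bZ[1/\det B]=\caR$, and consequently $G_B=\caR^n$ as well. For any prime $p\mid\det B$ we then compute $\overline{G}_{B,p}=G_B\otimes_{\bZ}\bZ_p=\caR^n\otimes_{\bZ}\bZ_p=\bQ_p^n$ since $p$ is invertible in $\caR$, and the reverse direction of Proposition \ref{prop:1}(2) forces $h_B\equiv x^n\pmod{p}$. For the backward implication, the identical argument applied to $B$ (using the common set of primes so that the localization ring $\caR$ is the same) yields $G_B=\caR^n=G_A$, and the identity map is the required isomorphism.

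The real content lies in the preceding local-global machinery (Corollary \ref{cor:loc}) and the local structure theorem (Proposition \ref{prop:1}); the lemma itself is a clean corollary with no genuine obstacle. The only point worth flagging is the interpretation of the parenthetical ``$T=I_n$'' as asserting literal equality $G_A=G_B$ inside $\bQ^n$, which keeps the argument entirely at the level of explicit subgroups and avoids having to manipulate an abstract isomorphism.
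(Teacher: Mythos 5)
Your proof is correct; the paper does not re-prove this lemma here but only cites it from \cite{s}, and your argument — using Proposition \ref{prop:1}(2) together with Corollary \ref{cor:loc} to identify $G_A=\caR^n$, then tensoring $G_B=\caR^n$ with $\bZ_p$ and reading Proposition \ref{prop:1}(2) in the reverse direction — is the natural derivation within the localization framework developed in Section 2. One small point worth making explicit: the invocation of Lemma \ref{lem:det}(ii) is licensed by the standing assumption $\cP(A)\ne\emptyset$ (imposed just before this lemma in the paper), which guarantees $A\notin\GL_n(\bZ)$; alternatively one can avoid Lemma \ref{lem:det} altogether by noting that $G_B=\caR^n$ forces $\overline{G}_{B,p}=\caR^n\otimes_{\bZ}\bZ_p$ to be $\bQ_p^n$ or $\bZ_p^n$ according as $p$ does or does not divide $\det A$, and then Proposition \ref{prop:1}(1) pins down the prime divisors of $\det B$ directly.
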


Therefore, for the rest of the paper we assume $\cP'\ne\emptyset$. 

\begin{rem}\label{r:p}
By Proposition \ref{prop:1},
for non-singular $A,B\in\M_n(\bZ)$, if $G_A\cong G_B$, then 
$\cP(A)=\cP(B)$, 
$\cP'(A)=\cP'(B)$, and $t_p(A)=t_p(B)$ for any prime $p\in\bN$. The converse is not true (see {\em e.g.}, \cite[Example 1]{s}, where non-singular $A,B\in\M_2(\bZ)$ share the same characteristic polynomial, but $G_A$ is not isomorphic to $G_B$).
\end{rem} 

\begin{cor}\label{cor:loc'}
$$
G_A=\bigcap_{p\,\in\,\cP'}(\caR^n\cap\overline{G}_{A,p}).
$$
\end{cor}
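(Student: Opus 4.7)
The plan is to deduce this from Corollary \ref{cor:loc} by showing that the terms in the intersection indexed by primes in $\cP \setminus \cP'$ contribute nothing new. By Corollary \ref{cor:loc} we already have
$$G_A = \bigcap_{p \,\vert\, \det A}(\caR^n \cap \overline{G}_{A,p}) = \bigcap_{p \,\in\, \cP}(\caR^n \cap \overline{G}_{A,p}),$$
so the only thing to check is that the factors indexed by primes $p \in \cP \setminus \cP'$ are redundant.

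First I would take an arbitrary prime $p \in \cP \setminus \cP'$. By definition of $\cP'$, such $p$ satisfies $p \,\vert\, \det A$ and $h_A \equiv x^n \pmod{p}$. Proposition \ref{prop:1}(2) then gives $\overline{G}_{A,p} = \bQ_p^n$. Since $\caR \subseteq \bQ \subseteq \bQ_p$ componentwise, it follows that
$$\caR^n \cap \overline{G}_{A,p} = \caR^n \cap \bQ_p^n = \caR^n.$$

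Finally, since the ambient intersection on the right-hand side of Corollary \ref{cor:loc} is already contained in $\caR^n$ (each $\caR^n \cap \overline{G}_{A,p}$ is a subset of $\caR^n$), removing the terms for $p \in \cP \setminus \cP'$, each of which equals $\caR^n$, does not change the intersection. Hence
$$G_A = \bigcap_{p \,\in\, \cP}(\caR^n \cap \overline{G}_{A,p}) = \bigcap_{p \,\in\, \cP'}(\caR^n \cap \overline{G}_{A,p}),$$
as required. There is no real obstacle here; the result is essentially a bookkeeping corollary once Proposition \ref{prop:1}(2) is invoked to eliminate the primes at which $\overline{G}_{A,p}$ is already maximal.
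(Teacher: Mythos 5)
Your proof is correct and takes essentially the same approach as the paper: both invoke Corollary \ref{cor:loc} and then use Proposition \ref{prop:1}(2) to observe that $\overline{G}_{A,p}=\bQ_p^n$ for $p\in\cP\setminus\cP'$, so that those factors reduce to $\caR^n$ and are redundant. (Both you and the paper implicitly rely on the standing assumption $\cP'\neq\emptyset$, without which the empty intersection on the right would be all of $\caR^n$ rather than $G_A$.)
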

\begin{proof}
Follows from Corollary \ref{cor:loc}, since 
$$
\overline{G}_{A,p}=\bQ_p^n\,\,\text{ for any }p\in\cP\backslash \cP'
$$
by Proposition \ref{prop:1}.
\end{proof}

The following lemma provides an explicit basis for the decomposition of
$\overline{G}_{A,p}$ as in \eqref{eq:decom}. Let 
$t_p=t_p(A)$ denote the multiplicity of zero in the reduction of the characteristic polynomial of $A$ modulo $p$, $0\leq t_p\leq n$. Let
$$
\bZ(p^{\infty})=\bQ_p/\bZ_p
$$
denote the Pr\"ufer $p$-group.

\begin{lem}\label{l:repr}
Let $A\in\M_n(\bZ)$ be non-singular. For any $p\in \cP$ 
there exists $W_p\in\GL_n(\bZ_p)$ such that
\bbe\label{eq:1up}
W_p^{-1}AW_p=\left(\begin{matrix}
A_1 & * \\
0 & A_2
\end{matrix}
\right),
\ee
where $A_1\in\M_{t_p}(\bZ_p)$, $A_2\in\GL_{n-t_p}(\bZ_p)$, and $A_1$ has characteristic polynomial $h_1\in\bZ_p[x]$ with 
\bbe\label{eq:2up}
h_1\equiv x^{t_p}\,(\text{mod }p).
\ee
Let $W_p=\left(\begin{matrix}
{\bf w}_{p1}  & \ldots & {\bf w}_{pn}
\end{matrix}
\right)$, where ${\bf w}_{p1},\ldots, {\bf w}_{pn}\in\bZ_p^n$. Then
\begin{eqnarray}
D_p(A)&=&\Span_{\bQ_p}({\bf w}_{p1},\ldots,{\bf w}_{pt_p})\cong\bQ_p^{t_p},\label{eq:rand1}\\
R_p(A)&=&\Span_{\bZ_p}({\bf w}_{pt_p+1},\ldots,{\bf w}_{pn})\cong\bZ_p^{n-t_p}.\label{eq:rand2}
\end{eqnarray}
In particular,
\bbe\label{eq:minloc}
\overline{G}_{A,p}/\bZ_p^n\cong \bZ(p^{\infty})^{t_p}.
\ee
\end{lem}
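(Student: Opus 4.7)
The plan is to factor the characteristic polynomial $h_A$ over $\bZ_p$, use the resulting decomposition of $\bZ_p^n$ as a $\bZ_p[A]$-module, and then analyze $\overline{G}_{A,p}$ in a basis adapted to this decomposition. Since $h_A\pmod p=x^{t_p}\,\bar{h}_2$ with $\bar{h}_2(0)\ne 0$, the factors $x^{t_p}$ and $\bar{h}_2$ are coprime in $\bF_p[x]$. Hensel's lemma lifts this to a factorization $h_A=h_1 h_2$ in $\bZ_p[x]$ with both factors monic, $\deg h_1=t_p$, $h_1\equiv x^{t_p}\pmod p$, and $h_2(0)\in\bZ_p^\times$. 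A Nakayama argument applied to the finitely generated $\bZ_p$-module $\bZ_p[x]/(h_1,h_2)$, which is a quotient of $\bZ_p[x]/(h_A)$ vanishing modulo $p$, yields $(h_1,h_2)=\bZ_p[x]$, so the Chinese remainder theorem gives $\bZ_p[x]/(h_A)\cong \bZ_p[x]/(h_1)\times \bZ_p[x]/(h_2)$.

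Next I would transfer this splitting to $\bZ_p^n$, viewed as a $\bZ_p[A]$-module. The idempotents of the product decomposition lift to orthogonal projectors $e_1,e_2\in\bZ_p[A]\subseteq \End(\bZ_p^n)$ with $e_1+e_2=I_n$, producing $\bZ_p^n=M_1\oplus M_2$ with $M_i=e_i\bZ_p^n$. Each $M_i$ is a direct summand of a free $\bZ_p$-module, hence itself free; a mod-$p$ computation identifies $M_1\otimes_{\bZ_p}\bF_p$ with the generalized $0$-eigenspace of $\bar{A}$, whose $\bF_p$-dimension equals the multiplicity of $0$ in $\bar h_A$, namely $t_p$. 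Hence $M_1$ has rank $t_p$ and $M_2$ has rank $n-t_p$. Since $h_1(A)$ kills $M_1$, $h_2(A)$ kills $M_2$, and $h_1,h_2$ share no irreducible factors in $\bZ_p[x]$, the unique factorization of $h_A$ as a product of the characteristic polynomials of $A|_{M_i}$ forces $h_i$ to be the characteristic polynomial of $A|_{M_i}$. In particular $\det(A|_{M_2})=\pm h_2(0)\in\bZ_p^\times$, so $A|_{M_2}\in \GL_{n-t_p}(\bZ_p)$. Juxtaposing ordered $\bZ_p$-bases of $M_1$ and then $M_2$ as columns yields a matrix $W_p\in\GL_n(\bZ_p)$ with $W_p^{-1}AW_p=\diag(A_1,A_2)$, which is in particular of the required block upper triangular shape.

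For the spanning claims, I would pass to the basis $w_{p1},\ldots,w_{pn}$, in which $\bZ_p^n=\bZ_p^{t_p}\oplus\bZ_p^{n-t_p}$ and $\overline{G}_{A,p}=\bigcup_{k\geq 0}A^{-k}\bZ_p^n$ becomes $\bigl(\bigcup_k A_1^{-k}\bZ_p^{t_p}\bigr)\oplus\bigl(\bigcup_k A_2^{-k}\bZ_p^{n-t_p}\bigr)$. Since $A_2\in\GL_{n-t_p}(\bZ_p)$, the second summand collapses to $\bZ_p^{n-t_p}$. For the first, Cayley--Hamilton applied to $A_1$ together with $h_1\equiv x^{t_p}\pmod p$ gives $A_1^{t_p}=pB$ for some $B\in\M_{t_p}(\bZ_p)$, hence $A_1^{kt_p}=p^kB^k\in p^k\M_{t_p}(\bZ_p)$ for every $k\geq 0$. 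Any $v\in\bQ_p^{t_p}$ lies in $p^{-N}\bZ_p^{t_p}$ for some $N\geq 0$, and then $A_1^{Nt_p}v\in\bZ_p^{t_p}$, giving $v\in A_1^{-Nt_p}\bZ_p^{t_p}$. Thus $\bigcup_k A_1^{-k}\bZ_p^{t_p}=\bQ_p^{t_p}$, and pulling back through $W_p$ delivers the claimed formulas for $D_p(A)$ and $R_p(A)$. The isomorphism $\overline{G}_{A,p}/\bZ_p^n\cong\bZ(p^\infty)^{t_p}$ then follows immediately from $(\bQ_p^{t_p}\oplus\bZ_p^{n-t_p})/(\bZ_p^{t_p}\oplus\bZ_p^{n-t_p})\cong(\bQ_p/\bZ_p)^{t_p}$.

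The main technical step I expect to matter most is the topological-nilpotence observation $A_1^{kt_p}\in p^k\M_{t_p}(\bZ_p)$, since this is what allows arbitrarily $p$-divisible vectors of $\bQ_p^{t_p}$ to be absorbed into $\overline{G}_{A,p}$; without it the equality $D_p(A)=\Span_{\bQ_p}(w_{p1},\ldots,w_{pt_p})$ would not follow from $\bZ_p^n=M_1\oplus M_2$. Everything else reduces to the standard lifting-of-idempotents and Chinese remainder theorem machinery over the complete local ring $\bZ_p$.
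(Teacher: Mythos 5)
Your proof is correct, but it takes a genuinely different route from the paper's. The paper invokes Theorem~\ref{th:trig}, a block upper-triangularization theorem over a PID proved via eigenvectors and the Smith normal form, to produce $W_p$ with $W_p^{-1}AW_p$ block upper-triangular and $h_1\equiv x^{t_p}\pmod p$; it then identifies $G_{\tilde A}$ with $\bQ_p^{t_p}\oplus\bZ_p^{n-t_p}$ by citing Proposition~\ref{prop:1} (imported from \cite{s}) applied to the diagonal blocks $A_1$, $A_2$. You instead Hensel-lift the coprime factorization $\bar h_A = x^{t_p}\bar h_2$ over $\bF_p$ to $h_A=h_1h_2$ over $\bZ_p$, lift idempotents via the Chinese remainder theorem, and split $\bZ_p^n$ as a $\bZ_p[A]$-module direct sum $M_1\oplus M_2$, yielding the strictly stronger conclusion that $W_p^{-1}AW_p$ is block \emph{diagonal}; you then identify $\bigcup_k A_1^{-k}\bZ_p^{t_p}=\bQ_p^{t_p}$ directly by the topological-nilpotence estimate $A_1^{kt_p}\in p^k\M_{t_p}(\bZ_p)$, which replaces the paper's appeal to Proposition~\ref{prop:1}. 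Your route is therefore more self-contained, while the paper's route is deliberately algorithmic: the Smith-normal-form mechanics in the proof of Theorem~\ref{th:trig} are reused in Remark~\ref{rem:base} and Example~\ref{ex:aaa} to compute $W_p$ and the resulting characteristic of $G_A$ explicitly, which is why the lemma is stated with a block upper-triangular rather than diagonal conclusion. Both arguments are sound and establish the same claims; yours in fact gives slightly more than what is asserted.
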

\begin{proof}
One can show that for an irreducible polynomial $\chi\in\bZ_p[x]$ of degree 
$n$, either $p$ does not divide 
$\chi(0)$ or $\chi\equiv x^n\,(\text{mod }p)$ (see, {\em e.g.}, the proof of \cite[Prop. 3.8]{s}).  Therefore,
the existence of $W_p\in\GL_n(\bZ_p)$ satisfying \eqref{eq:1up} and 
\eqref{eq:2up} follows from Theorem \ref{th:trig} below. Moreover, the proof of 
Theorem \ref{th:trig} gives an algorithm to construct $W_p$.
Let $\tilde A=W_p^{-1}AW_p$, $\tilde A\in\M_n(\bZ_p)$, and
$$
G_{\tilde A}=\left.\left\{\tilde A^{-k}{\bf x}\,\right\vert\, {\bf x}\in \bZ_p^n,\,k\in\bZ \right\}= \bQ_p{\bf e}_1\oplus\cdots\oplus \bQ_p{\bf e}_{t_p}\oplus \bZ_p{\bf e}_{t_p+1}\oplus\cdots\oplus \bZ_p{\bf e}_{n},
$$
{\it i.e.}, with respect to the standard basis $\{{\bf e}_1,\ldots,{\bf e}_n \}$ of 
$\bQ_p^n$,
$$
G_{\tilde A}=\bQ_p^{t_p}\oplus \bZ_p^{n-t_p}
$$
(this follows, {\em e.g.}, from Proposition \ref{prop:1} applied to $A_1$ and $A_2$).
Since $W_p\in\GL_n(\bZ_p)$, we have $\overline{G}_{A,p}=W_p(G_{\tilde A})$ and
 $\{W_p{\bf e}_1,\ldots,W_p{\bf e}_n \}$ is a free $\bZ_p$-basis of $\bZ_p^n$, ${\bf w}_{pi}=W_p{\bf e}_i$, $1\leq i\leq n$. Hence, \eqref{eq:rand1} -- \eqref{eq:minloc} follow.
\end{proof}

\section{Minimax groups}

\begin{defin}\cite{gm}
A torsion-free abelian group $G$ of rank $n$ is called a {\em minimax} group if there exists a free subgroup $H$ of $G$ of rank $n$ such that
$$
G/H\cong \bigoplus_{i=1}^l\bZ(p_i^{\infty})^{t_i},
$$
where $p_1,p_2,\ldots,p_l\in\bN$ are distinct primes and $t_1,t_2,\ldots,t_l\in\bN$.
\end{defin}

Let $A\in\operatorname{M}_n(\bZ)$ be non-singular.
We show that $G_A$ defined by \eqref{eq:opr} is a minimax group in the lemma below. Let $h_A\in\bZ[x]$ denote the characteristic polynomial of $A$.

\begin{lem}\label{l:minmax}
$G_A$ is a minimax group. Namely,
$$
G_A/\bZ^n\cong \bigoplus_{i=1}^l\bZ(p_i^{\infty})^{t_i},
$$
where $p_1,p_2,\ldots,p_l\in\bN$ are all distinct prime divisors of $\det A$, and $t_i$ is the multiplicity of zero in the reduction of $h_A$ modulo $p_i$, $0< t_i\leq n$, $1\leq i\leq l$.
\end{lem}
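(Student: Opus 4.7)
The plan is to realize $G_A/\bZ^n$ as a torsion abelian group and then decompose it into its $p$-primary pieces, each of which has already been computed locally in Lemma \ref{l:repr}, equation \eqref{eq:minloc}.

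First I would observe that since $\bZ^n\subseteq G_A\subseteq\bQ^n$, the quotient $T:=G_A/\bZ^n$ is a torsion abelian group. Hence $T$ decomposes as the direct sum of its $p$-primary components $T_p=T\otimes_\bZ\bZ_{(p)}$ over all primes $p$. The key identification I would then establish is
$$
T_p\;\cong\;G_{A,p}/\bZ_{(p)}^n\;\cong\;\overline{G}_{A,p}/\bZ_p^n.
$$
The first isomorphism follows by tensoring the short exact sequence $0\to\bZ^n\to G_A\to T\to 0$ with the flat $\bZ$-module $\bZ_{(p)}$, giving $G_{A,p}/\bZ_{(p)}^n\cong T\otimes\bZ_{(p)}=T_p$; equivalently, one can invoke Lemma \ref{lem:f2} directly. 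The second isomorphism comes from tensoring the same sequence with $\bZ_p$ (flat over $\bZ$) and noting that since $T$ is torsion and $\bZ_{(p)}\hookrightarrow\bZ_p$ induces isomorphisms on $p$-power torsion, one has $T_p\otimes\bZ_p\cong T_p$.

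Now I would apply \eqref{eq:minloc} to conclude $T_p\cong\bZ(p^\infty)^{t_p}$. Since $t_p=0$ precisely when $p\nmid\det A$ (by item $(1)$ of Proposition \ref{prop:1}), only the primes $p_1,\dots,p_l\in\cP$ contribute, yielding
$$
G_A/\bZ^n\;=\;\bigoplus_{p}T_p\;\cong\;\bigoplus_{i=1}^l\bZ(p_i^\infty)^{t_i},
$$
where $t_i=t_{p_i}(A)$ is the multiplicity of zero in the reduction of $h_A$ modulo $p_i$. Finally, $\bZ^n\subset G_A$ is a free subgroup of rank $n$ equal to the rank of $G_A$, so this exhibits $G_A$ as a minimax group in the sense of the definition, completing the proof.

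I do not anticipate a serious obstacle: the statement is essentially a global-to-local assembly of the already-established local computation \eqref{eq:minloc}. The only subtlety worth being careful about is the passage between the $\bZ_{(p)}$-localization and the $\bZ_p$-completion on the quotient $T$; this is harmless because $T$ is torsion, but it is the one place where the argument must be written down cleanly rather than waved through.
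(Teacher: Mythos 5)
Your proof is correct, and it reaches the same intermediate identification $G_A/\bZ^n\cong\bigoplus_p\overline{G}_{A,p}/\bZ_p^n$ as the paper, but by a genuinely different route. The paper constructs this isomorphism explicitly: it starts from the classical ``fractional part'' maps $\phi_p:\bQ_p/\bZ_p\hookrightarrow\bQ/\bZ$, assembles them into $\psi:\bigoplus_p\bQ_p^n/\bZ_p^n\rarab{\sim}\bQ^n/\bZ^n$, and then checks directly (using that multiplication by $A$ commutes with $\psi$ and that membership in $G_A$, resp.\ $\overline{G}_{A,p}$, is detected by $A^k{\bf v}\in\bZ^n$, resp.\ $\bZ_p^n$) that $\psi$ restricts to an isomorphism $\psi_A$ on the relevant subgroups. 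You instead observe that $T=G_A/\bZ^n$ is torsion, invoke the primary decomposition $T=\bigoplus_p T_p$, and identify $T_p\cong\overline{G}_{A,p}/\bZ_p^n$ by tensoring the short exact sequence $0\to\bZ^n\to G_A\to T\to 0$ with the flat rings $\bZ_{(p)}$ and $\bZ_p$ and using that $T\otimes\bZ_p\cong T_p$ for torsion $T$. Your version is more abstract and shorter; the paper's version is more explicit, and the concretely defined $\psi_A$ is put to further use immediately afterwards (in the proof of Lemma \ref{l:repr'} the generators of $G_A$ are read off by applying $\psi_A$ to local generators), which is presumably why the author chose that formulation. One small caveat: your aside that the first isomorphism can ``equivalently'' be obtained from Lemma \ref{lem:f2} is not quite right --- that lemma gives $\bQ^n\cap\overline{G}_{A,p}=G_{A,p}$, which is a different statement; the flatness argument you give is the correct one.
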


\begin{proof}
Let $p\in\bN$ be prime, and let $x=x_0+x_1\in\bQ_p$, where $x_1\in\bZ_p$ and $x_0\in\bQ$ is a ``fractional" part of $x$. It is well-known that the correspondence 
$\phi_p(x)=x_0$ induces a well-defined injective homomorphism $\phi_p:\bQ_p/\bZ_p\hrar\bQ/\bZ$ and that $\phi=\bigoplus_p\phi_p$ is a group isomorphism
$$
\phi=\bigoplus_p\phi_p:\bigoplus_p\bQ_p/\bZ_p\rarab{\sim}\bQ/\bZ.
$$
Let
$$
\psi:\bigoplus_p\bQ_p^n/\bZ_p^n\rarab{\sim}\bQ^n/\bZ^n
$$
be the natural isomorphism induced by $\phi$. It restricts to an isomorphism
$$
\psi_A:\bigoplus_p \overline{G}_{A,p}/\bZ_{p}^n\rarab{\sim} G_A/\bZ^n.
$$
Indeed, recall that $A$ has integer entries and therefore, multiplication by $A^i$ commutes with $\psi$ for any non-negative integer $i$. 
Furthermore, ${\bf u}\in\overline{G}_{A,p}$ (resp., ${\bf v}\in G_A$) if and only if $A^k{\bf u}\in\bZ_p^n$ (resp., $A^k{\bf v}\in\bZ^n$) for some $k\in\bN\cup\{0\}$. 
Finally, $\overline{G}_{A,p}/\bZ_{p}^n$ is trivial for any $p$ that does not divide $\det A$ by Proposition \ref{prop:1}. Therefore, $\psi_A$ is an isomorphism between the following groups
\bbe\label{eq:isom}
\psi_A:\bigoplus_{i=1}^l \overline{G}_{A,p_i}/\bZ_{p_i}^n\rarab{\sim} G_A/\bZ^n.
\ee
Combined with \eqref{eq:minloc}, this proves the lemma.
\end{proof}

Using 
Lemma \ref{l:repr} and isomorphism $\psi_A$ in  \eqref{eq:isom}, one can now write down (infinitely many) group generators of $G_A$ ({\em c.f.}, \cite{gm}). 

\begin{lem}\label{l:repr'}
Let $A\in\M_n(\bZ)$ be non-singular. 
For each $p\in \cP'$, let 
$$
W_p=\left(\begin{matrix}
{\bf w}_{p1}  & \ldots & {\bf w}_{pn}
\end{matrix}
\right)
$$ 
be as in Lemma $\ref{l:repr}$, ${\bf w}_{pj}\in\bZ_p^n$, $1\leq j\leq n$. Then
\bbe\label{eq:gen-n}
G_A=<{\bf e}_1,\ldots,{\bf e}_n, q^{-\infty}{\bf e}_1, \ldots, q^{-\infty}{\bf e}_n,
p^{-\infty}{\bf w}_{p1},\ldots,p^{-\infty}{\bf w}_{pt_p}>,
\ee
i.e., 
$G_A$ is generated over $\bZ$ by ${\bf e}_1,\ldots,{\bf e}_n$, 
$q^{-s}{\bf e}_1,\ldots,q^{-s}{\bf e}_n$, and $p^{-k}{\bf w}^{(k)}_{pi}$, where ${\bf w}^{(k)}_{pi}$ is the $(k-1)$-st partial sum of the standard $p$-adic expansion of ${\bf w}_{pi}$, $k,s\in\bN$, $1\leq i\leq t_p$, $q\in\cP\backslash\cP'$, $p\in\cP'$.
\end{lem}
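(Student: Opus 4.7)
The plan is to reduce the computation of $\bZ$-generators of $G_A$ to the local picture via the isomorphism
$$
\psi_A:\bigoplus_{p\in\cP}\overline{G}_{A,p}/\bZ_p^n\rarab{\sim} G_A/\bZ^n
$$
established in the proof of Lemma \ref{l:minmax} (only primes in $\cP$ contribute, since $\overline{G}_{A,p}=\bZ_p^n$ for $p\nmid\det A$ by Proposition \ref{prop:1}(1)), to read off a $\bZ$-generating set of each local summand, and then to transport it into $\bQ^n/\bZ^n$ by the coordinatewise ``fractional part'' embedding $\phi_p$ used to define $\psi_A$.

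First, split the direct sum as $\cP=(\cP\backslash\cP')\sqcup\cP'$. For $q\in\cP\backslash\cP'$ the relation $h_A\equiv x^n\pmod q$ forces $t_q=n$, so Proposition \ref{prop:1}(2) gives $\overline{G}_{A,q}=\bQ_q^n$. Thus $\overline{G}_{A,q}/\bZ_q^n\cong(\bQ_q/\bZ_q)^n$ is generated over $\bZ$ by the classes of $q^{-s}{\bf e}_i$ for $s\in\bN$ and $1\le i\le n$. Because $q^{-s}\in\bQ$ already, $\phi_q$ leaves these representatives unchanged, so $\psi_A$ sends the class of $q^{-s}{\bf e}_i$ to the class of $q^{-s}{\bf e}_i$ in $\bQ^n/\bZ^n$.

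Next, for $p\in\cP'$ invoke Lemma \ref{l:repr}. Since $W_p\in\GL_n(\bZ_p)$, the vectors ${\bf w}_{p1},\ldots,{\bf w}_{pn}$ form a $\bZ_p$-basis of $\bZ_p^n$, and combining \eqref{eq:rand1}--\eqref{eq:rand2} with this basis decomposition gives
$$
\overline{G}_{A,p}/\bZ_p^n\;=\;\bigoplus_{i=1}^{t_p}\bQ_p{\bf w}_{pi}\big/\bZ_p{\bf w}_{pi}\;\cong\;(\bQ_p/\bZ_p)^{t_p},
$$
generated over $\bZ$ by the classes of $p^{-k}{\bf w}_{pi}$ for $k\in\bN$ and $1\le i\le t_p$. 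Writing the coordinatewise standard $p$-adic expansion ${\bf w}_{pi}=\sum_{j\ge 0}a_{pij}p^j$ with digit-vectors $a_{pij}\in\{0,\ldots,p-1\}^n$, and setting ${\bf w}^{(k)}_{pi}=\sum_{j=0}^{k-1}a_{pij}p^j\in\bZ^n$, one has
$$
p^{-k}{\bf w}_{pi}\;=\;p^{-k}{\bf w}^{(k)}_{pi}\;+\;\sum_{j\ge k}a_{pij}p^{j-k},
$$
whose second summand lies in $\bZ_p^n$ and whose first summand has each coordinate of the form $m/p^k$ with $0\le m<p^k$. That first summand is therefore exactly the coordinatewise image of the fractional-part map, so by the very construction of $\phi=\bigoplus_p\phi_p$ the class of $p^{-k}{\bf w}_{pi}$ is carried by $\psi_A$ to the class of $p^{-k}{\bf w}^{(k)}_{pi}$ in $\bQ^n/\bZ^n$.

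Assembling the two cases, $G_A/\bZ^n$ is generated over $\bZ$ by the classes of $q^{-s}{\bf e}_i$ for $q\in\cP\backslash\cP'$ and of $p^{-k}{\bf w}^{(k)}_{pi}$ for $p\in\cP'$; lifting these to $\bQ^n$ and adjoining the $\bZ$-basis ${\bf e}_1,\ldots,{\bf e}_n$ of $\bZ^n$ yields the generating set \eqref{eq:gen-n}. The only non-routine ingredient is the coordinatewise identification of $\phi_p$ with truncation of the $p$-adic expansion, i.e.\ the statement that the fractional part of $p^{-k}\sum_{j\ge 0}a_jp^j$ is $p^{-k}\sum_{j=0}^{k-1}a_jp^j$; this is the single computation I would carry out in detail, everything else being bookkeeping.
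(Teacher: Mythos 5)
Your proposal is correct and follows the same route as the paper's proof: reduce to the local quotients $\overline{G}_{A,p}/\bZ_p^n$ via the isomorphism $\psi_A$, read off generators for $q\in\cP\backslash\cP'$ from Proposition \ref{prop:1}(2) and for $p\in\cP'$ from Lemma \ref{l:repr}, and transport them back. The only difference is that you spell out the fractional-part computation identifying $\phi_p\bigl(p^{-k}{\bf w}_{pi}\bigr)$ with $p^{-k}{\bf w}^{(k)}_{pi}$, which the paper leaves implicit in the final sentence ``Applying isomorphism $\psi_A$ \ldots we get \eqref{eq:gen-n}''; this is a harmless and correct elaboration, not a different argument.
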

\begin{proof}
Let $h_A\in\bZ[x]$ denote the characteristic polynomial of $A$.
Let $q\in\cP\backslash\cP'$, {\it i.e.}, $q\in\bN$ is a prime such that 
$$h_A\equiv x^{n}\,(\text{mod }q),\quad t_q=n.$$
By Proposition \ref{prop:1}, we have $\overline{G}_{A,q}=\bQ_q^n$. 
Then $\overline{G}_{A,q}$ is generated over $\bZ_q$ by
${\bf e}_1,\ldots, {\bf e}_n$,
$q^{-s}{\bf e}_1,\ldots, q^{-s}{\bf e}_n$, where $s\in\bN$, {\it i.e.}, in our notation,
\bbe\label{eq:local1'}
\overline{G}_{A,q}=\Span_{\bZ_q}({\bf e}_1,\ldots, {\bf e}_n, q^{-\infty}{\bf e}_1,\ldots,q^{-\infty}{\bf e}_n).
\ee
For $p\in\cP'$, by \eqref{eq:rand1}, \eqref{eq:rand2},
$\overline{G}_{A,p}$ is generated over $\bZ_p$ by ${\bf e}_1,\ldots,{\bf e}_n$, $p^{-k}{\bf w}_{pi}$, where $k\in\bN$, {\it i.e.},
\bbe\label{eq:local2'}
\overline{G}_{A,p}=\Span_{\bZ_p}({\bf e}_1,\ldots,{\bf e}_n, p^{-\infty}{\bf w}_{p1},\ldots,p^{-\infty}{\bf w}_{pt_p}).
\ee
Applying isomorphism $\psi_A$ in \eqref{eq:isom} to the generators of $\overline{G}_{A,p}$ in \eqref{eq:local1'} and \eqref{eq:local2'}, we get \eqref{eq:gen-n}.
\end{proof}

Generators of $G_A$ in \eqref{eq:gen-n} are written in terms of the standard basis $\{{\bf e}_1,\ldots, {\bf e}_n\}$ and 
vectors $\{{\bf w}_{p1},\ldots, {\bf w}_{pn}\}$, $p\in\cP'$. In what follows, we show the existence of a free basis $\{{\bf f}_1,\ldots, {\bf f}_n\}$ of $\bZ^n$ (that does not depend on $p$) and $p$-adic integers $\al_{pij}\in\bZ_p$ with 
$1\leq i\leq t_p$, $t_p+1\leq j\leq n$, $p\in\cP'$, that determine generators of $G_A$. It is often useful to extend constants from $\bQ$ to a number field $K$, a  finite extension of $\bQ$, {\it i.e.}, to consider $G_A\otimes_\bZ{\cO_K}$,
where $\cO_K$ denotes the ring of integers of $K$ (see Remark \ref{r:char} below). Therefore, we start with a preliminary result, which holds over $K$.

\begin{lem}\label{l:vect}
Let $\cS$ be a finite set of primes in $\bN$, let $K$ be a number field, and $n\in\bN$. For each $p\in\cS$ let $\fp$ be a prime ideal of $\cO_K$ above $p$ and let $V_{\fp}$ denote a non-zero proper subspace of $K_{\fp}^n$, where $K_{\fp}$ is the completion of $K$ with respect to $\fp$, $\dim V_{\fp}=t_p$, $0<t_p< n$. There exists a basis 
$\{{\bf f}_1,\ldots, {\bf f}_n\}$  of $\bZ^n$ such that for any $p\in\cS$ there are $\al_{pij}\in\cO_{\fp}$, 
$1\leq i\leq t_p<j\leq n$, such that
\bbe\label{eq:form}
{\bf x}_{pi}={\bf f}_i+\sum_{j=t_p+1}^n\al_{pij}{\bf f}_j,\quad 1\leq i\leq t_p,
\ee
is a $K_{\fp}$-basis of $V_{\fp}$. $($Here, $\cO_{\fp}$ denotes the ring of integers of $K_{\fp}.)$  
\end{lem}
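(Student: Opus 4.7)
The plan is to reformulate the conclusion as a mod-$\fp$ complementarity condition, build local data via linear algebra over $\bF_p$, and glue globally by strong approximation for $\SL_n$.

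First I set $L_\fp:=V_\fp\cap\cO_\fp^n$. Since $\cO_\fp^n/L_\fp$ embeds into $K_\fp^n/V_\fp$, it is torsion-free, so $L_\fp$ is a rank-$t_p$ direct summand of $\cO_\fp^n$; its reduction $\bar L_\fp\subset k_\fp^n$ (with $k_\fp=\cO_\fp/\fp$) is a $t_p$-dimensional $k_\fp$-subspace. I claim the desired basis exists if and only if, for every $p\in\cS$, the $\cO_\fp$-submodule $N_p:=\Span_{\cO_\fp}({\bf f}_{t_p+1},\ldots,{\bf f}_n)$ is a direct complement of $L_\fp$ in $\cO_\fp^n$. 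Indeed, under such a splitting the projection of each ${\bf f}_i$ ($i\le t_p$) onto $L_\fp$ along $N_p$ has the form ${\bf x}_{pi}={\bf f}_i+\sum_{j>t_p}\al_{pij}{\bf f}_j$ with $\al_{pij}\in\cO_\fp$, and these ${\bf x}_{pi}$ form an $\cO_\fp$-basis of $L_\fp$, hence a $K_\fp$-basis of $V_\fp$. By Nakayama's lemma, $L_\fp\oplus N_p=\cO_\fp^n$ is in turn equivalent to the reductions $\overline{{\bf f}}_{t_p+1},\ldots,\overline{{\bf f}}_n\in\bF_p^n\subset k_\fp^n$ $k_\fp$-spanning a complement of $\bar L_\fp$ in $k_\fp^n$.

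For each $p\in\cS$ I then construct a matrix $M_p\in\SL_n(\bF_p)$ whose last $n-t_p$ columns realize this spanning condition. Since $\bF_p^n$ already $k_\fp$-spans $k_\fp^n$, the composition $\bF_p^n\hookrightarrow k_\fp^n\twoheadrightarrow k_\fp^n/\bar L_\fp$ has $k_\fp$-spanning image; I therefore select $n-t_p$ vectors in $\bF_p^n$ whose images in $k_\fp^n/\bar L_\fp$ form a $k_\fp$-basis, which are automatically $\bF_p$-linearly independent and extend to an $\bF_p$-basis of $\bF_p^n$. Placing them as the last $n-t_p$ columns and completing the first $t_p$ columns arbitrarily yields a matrix in $\GL_n(\bF_p)$; since $t_p\ge 1$, rescaling its first column by $(\det M_p)^{-1}\in\bF_p^{\times}$ lands it in $\SL_n(\bF_p)$ without disturbing the last $n-t_p$ columns.

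Finally, with $P:=\prod_{p\in\cS}p$, strong approximation for $\SL_n$ (valid for $n\ge 2$, since $\SL_n(\bZ)$ is generated by elementary transvections $I_n+ae_{ij}$ which surject onto the analogous generators of $\SL_n(\bZ/P\bZ)$) gives surjectivity of the reduction $\SL_n(\bZ)\twoheadrightarrow\prod_{p\in\cS}\SL_n(\bF_p)$, so there exists $F\in\SL_n(\bZ)$ with $F\equiv M_p\pmod{p}$ for every $p\in\cS$. The columns of $F$ form the desired $\bZ$-basis $\{{\bf f}_1,\ldots,{\bf f}_n\}$ of $\bZ^n$, and by the reformulation above they satisfy the complementarity condition at every $p$; the cases $n=1$ or $\cS=\emptyset$ are vacuous, since $0<t_p<n$ is then empty or imposes nothing. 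The key technical input is strong approximation for $\SL_n(\bZ)$, and the main subtlety I anticipate is ensuring the local data lies in $\SL_n$ rather than merely $\GL_n$—handled by the determinant scaling above—so that $F$ is automatically unimodular and its columns genuinely form a $\bZ$-basis of $\bZ^n$.
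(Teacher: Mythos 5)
Your proof is correct, and it takes a genuinely different route from the paper's. The paper follows G\"utling--Mutzbauer's argument directly: starting from an arbitrary $K_\fp$-basis of each $V_\fp$, normalize coefficients to lie in $\cO_\fp$ with a unit entry, permute coordinates so one prime has a unit leading coefficient, then iteratively replace ${\bf f}_j \mapsto {\bf f}_j - t{\bf f}_1$ (with $t$ a product of the already-handled primes) to force a unit leading coefficient at every remaining prime, divide off, and induct on $n$. This is explicitly algorithmic, which the paper exploits later to compute characteristics in examples. Your argument instead reformulates the desired condition structurally: setting $L_\fp = V_\fp\cap\cO_\fp^n$ (a rank-$t_p$ direct summand of $\cO_\fp^n$, since $\cO_\fp^n/L_\fp$ is torsion-free hence free over the DVR $\cO_\fp$), the conclusion is equivalent to requiring $\Span_{\cO_\fp}({\bf f}_{t_p+1},\ldots,{\bf f}_n)$ to be a complement of $L_\fp$ for all $p$; Nakayama reduces this to a transversality condition mod $\fp$ in $k_\fp^n$; and since integer vectors reduce into the prime subfield $\bF_p\subset k_\fp$, which $k_\fp$-spans $k_\fp^n$, suitable local matrices $M_p$ can be chosen in $\SL_n(\bF_p)$ (the rescaling of the first column works because $t_p\geq 1$), then glued to a single $F\in\SL_n(\bZ)$ via surjectivity of $\SL_n(\bZ)\rar\SL_n(\bZ/P\bZ)$. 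Both proofs are valid for $n\geq 2$ (the case $n=1$ is vacuous here, as you note); the paper's buys an explicit basis-modification procedure useful in computations, while yours buys conceptual clarity by isolating the real content as a mod-$p$ complementarity statement plus strong approximation.

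One small point worth making explicit in your Step 2: to see that the $n-t_p$ vectors you extract from $\bF_p^n$ are $\bF_p$-linearly independent, observe that their images form a $k_\fp$-basis of $k_\fp^n/\bar L_\fp$, hence the vectors themselves are $k_\fp$-linearly independent in $k_\fp^n$, hence a fortiori $\bF_p$-linearly independent in $\bF_p^n$. You gesture at this but stating it cleanly avoids any appearance of conflating $\bF_p$- and $k_\fp$-linear independence, which are not equivalent in general for subsets of $\bF_p^n$ viewed inside $k_\fp^n$ when $k_\fp\neq\bF_p$.
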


\begin{proof}
It is a straightforward generalization of \cite[p. 194, Lemma 1]{gm} from $\bQ$ to a number field. 
We repeat their argument in order to use later in specific examples.
The argument does not depend on the choice of prime ideals $\fp$. Therefore, for simplicity, we denote 
$\cO_p=\cO_{\fp}$, $K_p=K_{\fp}$, $V_p=V_{\fp}$, and so on. 

\sbr

For a fixed $p\in\cS$ let ${\bf y}_{p1},\ldots,{\bf y}_{pt_p}$ be a $K_{p}$-basis of $V_{p}$. Let $(\pi)$ be the maximal ideal of $\cO_p$. Let 
$$
{\bf y}_{pi}=\sum_{k=1}^n\ga_{pi}^k{\bf e}_k,
$$
where $\forall\ga_{pi}^k\in K_p$. By multiplying or dividing by positive powers of $\pi$ if necessary, without loss of generality, we can assume that $\forall\ga_{pi}^k\in \cO_p$ and for $\forall i$ there is a unit among 
$\ga_{pi}^1,\ldots, \ga_{pi}^n$. Let $\{{\bf f}_1,\ldots,{\bf f}_n\}$ be an ordered basis of $\bZ^n$ obtained by permuting elements in the standard basis $\{{\bf e}_1,\ldots,{\bf e}_n\}$, so that 
\bbe\label{eq:p}
{\bf y}_{p1}=\sum_{k=1}^n\de_{p1}^k{\bf f}_k,\quad \de_{p1}^1\in\cO_p^{\times}.
\ee
Here $\cO_p^{\times}$ denotes the set of all units in $\cO_p$. Now we show that, without loss of generality, we can assume $\de_{q1}^1\in\cO_q^{\times}$ for any $q\in\cS$ other than $p$. Indeed,
denote by $\Ga$ the set of all primes $q\in\cS$ such that $\de_{q1}^1\in\cO_q^{\times}$. By \eqref{eq:p}, $\Ga\ne\emptyset$ and let
\bbe\label{eq:t}
t=\prod_{p\in\Ga} p.
\ee
Let $s\in\cS\backslash\Ga$, {\it i.e.}, $\de_{s1}^1\in \cO_s$ is not a unit. 
By assumption, there is $j\in\{2,\ldots,n\}$ such that $\de_{s1}^j\in\cO_s^{\times}$. Consider ${\bf f}'_j={\bf f}_j-t{\bf f}_1$ and ${\bf f}_i'={\bf f}_i$ for any $i\ne j$, $1\leq i\leq n$. Then, with respect to the new basis $\{{\bf f}'_1,\ldots,{\bf f}'_n\}$ of $\bZ^n$, we have
$$
{\bf y}_{p1}=\sum_{k=1}^n\tilde\de_{p1}^k{\bf f}'_k,\quad \tilde\de_{p1}^1\in\cO_p^{\times}
$$
for any $p\in\Ga$ and $p=s$. We now  add $s$ to $\Ga$ and change $t$ 
in \eqref{eq:t} to $ts$. Repeating the process for the remaining elements in 
$\cS\backslash\Ga$, we obtain a basis 
$\{{\bf f}''_1,\ldots,{\bf f}''_n\}$ of $\bZ^n$ such that for any $p\in\cS$ we have
$$
{\bf y}_{pi}=\sum_{k=1}^n\epsilon_{pi}^k{\bf f}''_k,\quad \epsilon_{p1}^1\in\cO_p^{\times}, \quad 
\epsilon_{pi}^2,\ldots,\epsilon_{pi}^{n}\in\cO_p,\quad 1\leq i\leq t_p.
$$
By dividing ${\bf y}_{p1}$ by $\epsilon_{p1}^1$, without loss of generality, 
$\epsilon_{p1}^1=1$. Let $\cS'=\{p\in\cS,\,\,t_p\geq 2\}$.
For any $p\in\cS'$ and $2\leq i\leq t_p$, let
$\tilde{\bf y}_{pi}={\bf y}_{pi}-\epsilon_{pi}^1{\bf y}_{p1}$. Then  
$$
\tilde{\bf y}_{pi}\in\Span_{\cO_p}({\bf f}''_2,\ldots,{\bf f}''_n), \quad
2\leq i\leq t_p,\,\, p\in\cS'.  
$$
Applying induction to vectors $\tilde{\bf y}_{pi}$, 
$2\leq i\leq t_p$, $p\in\cS'$, we get a free $\bZ$-basis $\{{\bf g}_2,\ldots,{\bf g}_n\}$ of 
$\Span_{\bZ}({\bf f}''_2,\ldots,{\bf f}''_n)$ such that
\begin{eqnarray*}
&& \hat{\bf y}_{pi}={\bf g}_i+\sum_{j=t_p+1}^n\mu_{pi}^j{\bf g}_j,\quad
\mu_{pi}^{t_p+1},\ldots,\mu_{pi}^n\in\cO_p,\,\,
2\leq i\leq t_p, \\
&& \Span_{\cO_p}(\tilde{\bf y}_{p2},\ldots,\tilde{\bf y}_{pt_p})
=\Span_{\cO_p}(\hat{\bf y}_{p2},\ldots,\hat{\bf y}_{pt_p}),\,\,p\in\cS'.
\end{eqnarray*}
Finally, for any $p\in\cS$ let 
\begin{eqnarray*}
&& \tilde{\bf y}_{p1}={\bf f}''_1+\sum_{k=2}^n\mu_{p1}^k{\bf g}_k,\quad 
\mu_{p1}^2,\ldots,\mu_{p1}^n\in\cO_p,\\
&& \hat{\bf y}_{p1}=\tilde{\bf y}_{p1}-\sum_{k=2}^{t_p}\mu_{p1}^k\hat{\bf y}_{pk}=
{\bf f}''_1+\sum_{j=t_p+1}^n\tilde\mu_{p1}^j{\bf g}_j.
\end{eqnarray*}
Hence, with respect to the $\bZ$-basis $\{{\bf f}''_1,{\bf g}_2,\ldots,{\bf g}_n \}$, ${\bf x}_{pi}=\hat{\bf y}_{pi}$, $1\leq i\leq t_p$, $p\in\cS$, have the form  \eqref{eq:form}. 
\end{proof}

In the next lemma we apply Lemma \ref{l:vect} to divisible parts of $\overline{G}_{A,p}$ and more generally, to the divisible parts of $\overline{G}_{A,p}\otimes_{\bZ_p}\cO_{\fp}$. The result is a free basis $\{{\bf f}_1,\ldots, {\bf f}_n\}$ of $\bZ^n$ and numbers $\al_{pij}\in\bZ_p$, $p\in\cP'(A)$, that produce generators of $G_A$ over $\bZ$.

\begin{lem}\label{lem:gen-n}
Let $A\in\M_n(\bZ)$ be non-singular and let $K$ be a number field. 
There exists a basis 
$\{{\bf f}_1,\ldots, {\bf f}_n\}$ of $\bZ^n$ such that for any $p\in\cP'(A)$ and a prime ideal $\fp$ of $\cO_K$ above $p$ there are $\al_{pij}\in\cO_{\fp}$, 
$i\in\{1,\ldots,t_p\}$, $j\in\{t_p+1,\ldots,n\}$, such that
\begin{eqnarray}
&& \overline{G}_{A,p}\otimes_{\bZ_p}\cO_{\fp}=\Span_{K_{\fp}}({\bf x}_{p1},\ldots,{\bf x}_{pt_p})\oplus\Span_{\cO_{\fp}}({\bf f}_{t_p+1},\ldots, 
{\bf f}_{n}),\label{eq:ppp} \\
&& {\bf x}_{pi}={\bf f}_i+\sum_{j=t_p+1}^n\al_{pij}{\bf f}_j, \quad 1\leq i\leq t_p. \label{eq:ppp1}
\end{eqnarray}
Moreover, all $\al_{pij}$ belong to $\bZ_p$, they do not depend on $K$, $\fp$ above $p$, and are uniquely defined for a fixed ordered basis 
$\{{\bf f}_1,\ldots, {\bf f}_n\}$. Furthermore,
\bbe\label{eq:gener-n0}
G_A=<{\bf f}_1,\ldots,{\bf f}_n, q^{-\infty}{\bf f}_1, \ldots, q^{-\infty}{\bf f}_n, 
p^{-\infty}{\bf x}_{pi}>,\quad q\in\cP\backslash\cP',\quad 1\leq i\leq t_p.
\ee
\end{lem}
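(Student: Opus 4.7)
The plan is to apply Lemma~\ref{l:vect} to the divisible parts of the localizations of $G_A$. For each $p\in\cP'(A)$, the hypothesis $h_A\not\equiv x^n\pmod p$ together with $p\,\vert\,\det A$ forces $0<t_p<n$, so by Proposition~\ref{prop:1} the divisible part $D_p(A)\subseteq\bQ_p^n$ is a nonzero proper $\bQ_p$-subspace of dimension $t_p$. I first apply Lemma~\ref{l:vect} with $\cS=\cP'(A)$, $K=\bQ$, $\fp=(p)$, and $V_p=D_p(A)$ to obtain a $\bZ$-basis $\{{\bf f}_1,\ldots,{\bf f}_n\}$ of $\bZ^n$ and elements $\al_{pij}\in\bZ_p$ such that the vectors ${\bf x}_{pi}={\bf f}_i+\sum_{j>t_p}\al_{pij}{\bf f}_j$ form a $\bQ_p$-basis of $D_p(A)$. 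For any number field $K$ and any prime $\fp$ of $\cO_K$ above $p$, setting $V_{\fp}:=D_p(A)\otimes_{\bQ_p}K_{\fp}\subseteq K_{\fp}^n$, the same vectors ${\bf x}_{pi}$ remain $K_{\fp}$-linearly independent under the flat scalar extension $K_{\fp}/\bQ_p$ and therefore form a $K_{\fp}$-basis of $V_{\fp}$, yielding the required form \eqref{eq:ppp1} with $\al_{pij}\in\bZ_p\subseteq\cO_{\fp}$.

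To establish \eqref{eq:ppp}, I note that the change of basis from $\{{\bf f}_1,\ldots,{\bf f}_n\}$ to $\{{\bf x}_{p1},\ldots,{\bf x}_{pt_p},{\bf f}_{t_p+1},\ldots,{\bf f}_n\}$ is unipotent upper triangular with entries in $\cO_{\fp}$, hence the latter is also an $\cO_{\fp}$-basis of $\cO_{\fp}^n$. From this, $V_{\fp}\cap\cO_{\fp}^n=\Span_{\cO_{\fp}}({\bf x}_{p1},\ldots,{\bf x}_{pt_p})$, which intersects $\Span_{\cO_{\fp}}({\bf f}_{t_p+1},\ldots,{\bf f}_n)$ trivially, so the sum in \eqref{eq:ppp} is direct. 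To see that it exhausts $\overline{G}_{A,p}\otimes_{\bZ_p}\cO_{\fp}$, tensoring \eqref{eq:decom} with $\cO_{\fp}$ and using $R_p(A)\subseteq\bZ_p^n$ lets one write any element as $v+\sum_i r_i{\bf f}_i$ with $v\in V_{\fp}$ and $r_i\in\cO_{\fp}$; substituting ${\bf f}_i={\bf x}_{pi}-\sum_{j>t_p}\al_{pij}{\bf f}_j$ for $i\leq t_p$ places the result in $V_{\fp}+\Span_{\cO_{\fp}}({\bf f}_{t_p+1},\ldots,{\bf f}_n)$. Uniqueness of the $\al_{pij}$ for a fixed ordered basis (and hence their independence from $K$ and $\fp$) is now immediate: any competing $\tilde{{\bf x}}_{pi}$ of the form \eqref{eq:ppp1} lying in $V_{\fp}$ would satisfy ${\bf x}_{pi}-\tilde{{\bf x}}_{pi}\in V_{\fp}\cap\Span_{\cO_{\fp}}({\bf f}_{t_p+1},\ldots,{\bf f}_n)=0$.

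Finally, the generator formula \eqref{eq:gener-n0} follows by repeating the proof of Lemma~\ref{l:repr'} with ${\bf e}_i,{\bf w}_{pi}$ replaced by ${\bf f}_i,{\bf x}_{pi}$. Specializing \eqref{eq:ppp} to $K=\bQ$ gives $\overline{G}_{A,p}=D_p(A)\oplus\Span_{\bZ_p}({\bf f}_{t_p+1},\ldots,{\bf f}_n)$, so $\overline{G}_{A,p}$ is $\bZ_p$-generated by ${\bf f}_1,\ldots,{\bf f}_n$ together with $p^{-\infty}{\bf x}_{p1},\ldots,p^{-\infty}{\bf x}_{pt_p}$; for $q\in\cP\setminus\cP'$, Proposition~\ref{prop:1} gives $\overline{G}_{A,q}=\bQ_q^n$, which is $\bZ_q$-generated by ${\bf f}_1,\ldots,{\bf f}_n$ and $q^{-\infty}{\bf f}_1,\ldots,q^{-\infty}{\bf f}_n$. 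The isomorphism $\psi_A$ from \eqref{eq:isom} then transports these local generators into the asserted global generators of $G_A$. The main subtlety is packaging \eqref{eq:ppp} carefully enough to extract simultaneously the uniqueness of the $\al_{pij}$ and their integrality over $\bZ_p$ independently of the auxiliary field $K$.
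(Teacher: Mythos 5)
Your proof is correct, and it takes a genuinely different (and arguably cleaner) route from the paper's in establishing the key assertion that $\al_{pij}\in\bZ_p$ and is independent of $K$ and $\fp$. The paper applies Lemma~\ref{l:vect} \emph{over the given number field $K$}, obtaining $\al_{pij}\in\cO_{\fp}$, and then descends: enlarging $K$ to a Galois extension, it uses the $\Gal(K_{\fp}/\bQ_p)$-stability of $\overline{G}_{\fp}$ and the uniqueness of the divisible part to show $\sg(\al_{pij})=\al_{pij}$, hence $\al_{pij}\in\bZ_p$, and a second uniqueness argument identifies these with the $K=\bQ$ coefficients. You instead apply Lemma~\ref{l:vect} \emph{only over $\bQ$} to get $\al_{pij}\in\bZ_p$ outright, then extend scalars to an arbitrary $K_{\fp}$, observing that the $\bQ_p$-basis $\{{\bf x}_{pi}\}$ of $D_p(A)$ remains a $K_{\fp}$-basis of $D_p(A)\otimes_{\bQ_p}K_{\fp}$ (which is indeed $D_{\fp}$ because the decomposition $\overline{G}_{A,p}=D_p(A)\oplus R_p(A)$ tensors to $\overline{G}_{\fp}=K_{\fp}^{t_p}\oplus\cO_{\fp}^{n-t_p}$), and deducing uniqueness from the direct sum $V_{\fp}\cap\Span_{\cO_{\fp}}({\bf f}_{t_p+1},\dots,{\bf f}_n)=0$. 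The upshot: your ascent-by-flat-base-change avoids passing to a Galois closure entirely, at the cost of having to explicitly verify that $D_p(A)\otimes_{\bQ_p}K_{\fp}$ is the divisible part of $\overline{G}_{\fp}$ (which you do correctly via the tensored decomposition). The paper's Galois-descent version is more intrinsic but requires the enlargement step. The rest of your argument---the exhaustion computation, the uniqueness of the $\al_{pij}$, and transporting local generators through $\psi_A$ as in Lemma~\ref{l:repr'}---matches the paper's.
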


\begin{proof} By Lemma \ref{eq:decom}, $\overline{G}_{A,p}=D_p(A)\oplus R_p(A)$, where as $\bZ_p$-modules, $D_p(A)\cong \bQ_p^{t_p}$, 
$R_p(A)\cong \bZ_p^{n-t_p}$. Denote 
\begin{eqnarray*}
&& \overline{G}_{\fp}=\overline{G}_{A,p}\otimes_{\bZ_p}\cO_{\fp}, \\
&& D_{\fp}=D_{p}(A)\otimes_{\bZ_p}\cO_{\fp}, \\
&& R_{\fp}=R_{p}(A)\otimes_{\bZ_p}\cO_{\fp}. 
\end{eqnarray*}
Then
$\overline{G}_{\fp}=D_{\fp}\oplus R_{\fp}$, where as $\cO_{\fp}$-modules, $D_{\fp}\cong K_{\fp}^{t_p}$, $R_{\fp}\cong \cO_{\fp}^{n-t_p}$. We apply Lemma \ref{l:vect} to $\cS=\cP'(A)$, $K$, and $V_{\fp}=D_{\fp}$. Then there exists a basis 
$\{{\bf f}_1,\ldots, {\bf f}_n\}$ of $\bZ^n$ such that for any $p\in\cP'(A)$, 
$D_{\fp}=\Span_{K_{\fp}}({\bf x}_{p1},\ldots,{\bf x}_{pt_p})$, and ${\bf x}_{pi}$'s are given by \eqref{eq:form}. 
 We only need to show 
$\overline{G}_{\fp}\subseteq D_{\fp}\oplus\Span_{\cO_{\fp}}({\bf f}_{t_p+1},\ldots, 
{\bf f}_{n})$. Indeed, by \eqref{eq:rand2}, for any ${\bf u}\in R_{\fp}$, 
$$
{\bf u}=\sum_{i=t_p+1}^n\al_i{\bf w}_{pi}=\sum_{i=1}^n\be_i{\bf f}_i=
\sum_{i=1}^{t_p}\ga_i{\bf x}_{pi}+\sum_{i=t_p+1}^n\ga_i{\bf f}_i,
$$
where all $\al_i\in\cO_{\fp}$ by definition of $R_{\fp}$, and all $\be_i\in\cO_{\fp}$, since all ${\bf w}_{pi}\in\bZ_p^n$.
Finally, all $\ga_i\in\cO_{\fp}$ by definition of ${\bf x}_{pi}$. This proves \eqref{eq:ppp}.

%

\sbr

We now show that for any $K$, all $\al_{pij}\in\bZ_p$. 
By enlarging $K$ if necessary, without loss of generality, we assume $K$ is Galois over $\bQ$. Let $p\in\cP'(A)$ be arbitrary. By above, \eqref{eq:ppp}, \eqref{eq:ppp1} hold. For any $\sg\in\Gal(K_{\fp}/\bQ_p)$, we have 
$\sg(\overline{G}_{\fp})=\overline{G}_{\fp}$, $\sg(R_{\fp})=R_{\fp}$, and 
$\sg(D_{\fp})=\Span_{K_{\fp}}(\sg({\bf x}_{pi}))$, where
$$
\sg({\bf x}_{pi})={\bf f}_i+\sum_{j=t_p+1}^n\sg(\al_{pij}){\bf f}_j, \quad 1\leq i\leq t_p,
$$
since $A$, ${\bf f}_1,\ldots, {\bf f}_n$ are defined over $\bZ$. By the uniqueness of the divisible part, we have
$\Span_{K_{\fp}}(\sg({\bf x}_{pi}))=\Span_{K_{\fp}}({\bf x}_{pi})$ and hence $\sg(\al_{pij})=\al_{pij}$ for any $i,j$. Since $\al_{pij}\in\cO_{\fp}$, this implies $\al_{pij}\in\bZ_p$ and hence ${\bf x}_{pij}\in\bZ_p^n$ for all $p,i,j$. Furthermore, 
$\overline{G}_{A,p}$ consists of elements in $G_{\fp}$ invariant under the action of $\Gal(K_{\fp}/\bQ_p)$. Hence,
$$
\overline{G}_{A,p}=\Span_{\bQ_{\fp}}({\bf x}_{p1},\ldots,{\bf x}_{pt_p})\oplus\Span_{\bZ_p}({\bf f}_{t_p+1},\ldots,{\bf f}_{n}).
$$
On the other hand, if \eqref{eq:ppp}, \eqref{eq:ppp1} hold for $K=\bQ_p$ and the same basis ${\bf f}_1,\ldots, {\bf f}_n$, then
\begin{eqnarray*}
&&\overline{G}_{A,p}=\Span_{\bQ_{\fp}}({\bf x}'_{p1},\ldots,{\bf x}'_{pt_p})\oplus\Span_{\bZ_p}({\bf f}_{t_p+1},\ldots,{\bf f}_{n}), \\
&& {\bf x}'_{pi}={\bf f}_i+\sum_{j=t_p+1}^n\al'_{pij}{\bf f}_j, \quad 1\leq i\leq t_p,
\end{eqnarray*}
for some $\al'_{pij}\in\bZ_p$, a priori, different from $\al_{pij}\in\bZ_p$.
As above, by the uniqueness of the divisible part, we have $\al_{pij}=\al'_{pij}$ for all $p,i,j$. 
This shows that $\al_{pij}$'s do not depend on $K$ and $\fp$'s.

\sbr

For each $p\in\cP'(A)$ let ${\bf w}_{p1},\ldots,{\bf w}_{pt_p}$ be as in Lemma \ref{l:repr}. By \eqref{eq:rand1},
$\{{\bf w}_{p1},\ldots,{\bf w}_{pt_p}\}$ is a $\bQ_p$-basis of $D_p(A)$. By Lemma \ref{l:vect} applied to $\cS=\cP'(A)$,
$K=\bQ$, and $V_p=D_p(A)$, we get 
$\Span_{\bQ_p}({\bf w}_{p1},\ldots,{\bf w}_{pt_p})=\Span_{\bQ_p}({\bf x}_{p1},\ldots,{\bf x}_{pt_p})$. 
Thus, \eqref{eq:gener-n0} follows from \eqref{eq:gen-n}.
\end{proof}

\begin{defin}\cite{gm}
Let $\{{\bf f}_1,\ldots, {\bf f}_n\}$ and $\al_{pij}\in\bZ_p$ be as in  Lemma \ref{lem:gen-n}.
The set
$$
M(A;{\bf f}_1,\ldots, {\bf f}_n)=\{\al_{pij}\in\bZ_p\,\vert\,p\in\cP',\,1\leq i\leq t_p<j\leq n\}
$$
is called the {\it characteristic} of $G_A$ relative to the ordered basis $\{{\bf f}_1,\ldots, {\bf f}_n\}$. 
\end{defin}

\begin{rem}\label{rem:base}
To calculate a characteristic of $G_A$ in practice, one can start with a basis $\cW_p=\{{\bf w}_{p1},\ldots,{\bf w}_{pt_p}\}$ of the divisible part $D_p(A)$, and then apply the procedure in the proof of Lemma \ref{l:vect} for $\cS=\cP'(A)$, 
$K=\bQ$, $V_{\fp}=D_p(A)$ 
(see Lemma \ref{l:repr} for the definition of $\cW_p$). In turn, to find $\cW_p$, one can use the procedure described in the proof of Theorem \ref{th:trig} below.  
\end{rem}

Our ultimate goal is to characterize when $G_A\cong G_B$ for non-singular $A,B\in\M_n(\bZ)$. In the next lemma we show that by conjugating $A$ by a matrix in 
$\GL_n(\bZ)$ corresponding to $\{{\bf f}_1,\ldots, {\bf f}_n\}$, without loss of generality, we can assume that the
 characteristics of both $G_A$, $G_B$ are given with respect to the standard basis $\{{\bf e}_1,\ldots, {\bf e}_n\}$.

\begin{lem}\label{l:stand}
Let $A\in\M_n(\bZ)$ be non-singular and let
$M(A;{\bf f}_1,\ldots, {\bf f}_n)$
be the characteristic of $G_A$ relative to a free $\bZ$-basis $\{{\bf f}_1,\ldots, {\bf f}_n\}$ of $\bZ^n$. Let 
$\{{\bf g}_1,\ldots, {\bf g}_n\}$ be another free $\bZ$-basis of $\bZ^n$ and let
$S\in\GL_n(\bZ)$ be a change-of-basis matrix: $S{\bf f}_i={\bf g}_i$, 
$1\leq i\leq n$. Then $S(G_A)=G_{SAS^{-1}}$, $\cP'(A)=\cP'(SAS^{-1})$, $t_p(A)=t_p(SAS^{-1})$, and
$$
M(SAS^{-1};{\bf g}_1,\ldots, {\bf g}_n)=M(A;{\bf f}_1,\ldots, {\bf f}_n).
$$
\end{lem}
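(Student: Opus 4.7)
My overall plan is to transport the decompositions provided by Lemma \ref{lem:gen-n} through the invertible integer change of basis $S$ and then invoke the uniqueness clause of that lemma to conclude that the characteristic numbers coincide.

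First I would establish the three easier equalities. The identity $S(G_A)=G_{SAS^{-1}}$ is immediate from the definition \eqref{eq:opr}: since $S\in\GL_n(\bZ)$ we have $S(\bZ^n)=\bZ^n$, and writing $A':=SAS^{-1}$ one has $SA^{-k}{\bf x}=(A')^{-k}S{\bf x}$ for any ${\bf x}\in\bZ^n$, $k\in\bZ$. The equalities $\cP'(A)=\cP'(A')$ and $t_p(A)=t_p(A')$ for every prime $p$ then follow because $A$ and $A'$ share the same characteristic polynomial $h_A\in\bZ[x]$, hence the same reduction modulo $p$ and in particular the same multiplicity $t_p$ of zero.

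For the last assertion, I would fix a number field $K$, a prime $p\in\cP'(A)$ and a prime $\fp$ of $\cO_K$ above $p$, and take $\al_{pij}\in\bZ_p$ together with
$$
{\bf x}_{pi}={\bf f}_i+\sum_{j=t_p+1}^n\al_{pij}{\bf f}_j,\quad 1\leq i\leq t_p,
$$
as supplied by Lemma \ref{lem:gen-n} for $A$ and the basis $\{{\bf f}_1,\ldots,{\bf f}_n\}$. Since $S\in\GL_n(\bZ)\subset\GL_n(\cO_{\fp})$, multiplication by $S$ preserves both $\bZ_p^n$ and $\cO_{\fp}^n$, and the identity $S\bigl(\overline{G}_{A,p}\otimes_{\bZ_p}\cO_{\fp}\bigr)=\overline{G}_{A',p}\otimes_{\bZ_p}\cO_{\fp}$ transports the decomposition \eqref{eq:ppp} to
$$
\overline{G}_{A',p}\otimes_{\bZ_p}\cO_{\fp}=\Span_{K_{\fp}}(S{\bf x}_{p1},\ldots,S{\bf x}_{pt_p})\oplus\Span_{\cO_{\fp}}({\bf g}_{t_p+1},\ldots,{\bf g}_n),
$$
with $S{\bf x}_{pi}={\bf g}_i+\sum_{j=t_p+1}^n\al_{pij}{\bf g}_j$ for $1\leq i\leq t_p$. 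These are precisely the data \eqref{eq:ppp}--\eqref{eq:ppp1} for $A'$ relative to the ordered basis $\{{\bf g}_1,\ldots,{\bf g}_n\}$, so the uniqueness clause of Lemma \ref{lem:gen-n} will force the characteristic of $G_{A'}$ relative to $\{{\bf g}_1,\ldots,{\bf g}_n\}$ to consist of exactly the same scalars $\al_{pij}$.

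There will be no serious obstacle; the only subtlety is ensuring that $S\in\GL_n(\bZ)$ acts compatibly at every relevant level, i.e.\ on $\bZ^n$, $\bZ_p^n$, and $\cO_{\fp}^n$, so that the splitting of $\overline{G}_{A,p}\otimes_{\bZ_p}\cO_{\fp}$ into its divisible $K_{\fp}$-part and reduced $\cO_{\fp}$-part is preserved intact under $S$. Once that observation is in place, the proof is essentially a direct transport of structure followed by an appeal to uniqueness.
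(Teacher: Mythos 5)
Your proof is correct and matches the paper's approach: the paper simply states that the lemma ``Follows easily from the definition \eqref{eq:opr} of $G_A$ and Lemma \ref{lem:gen-n},'' and your argument supplies exactly that derivation, transporting the decomposition \eqref{eq:ppp}--\eqref{eq:ppp1} through $S\in\GL_n(\bZ)$ and invoking the uniqueness clause of Lemma \ref{lem:gen-n}.
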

\begin{proof}
Follows easily from the definition \eqref{eq:opr} of $G_A$ and Lemma \ref{lem:gen-n}.
\end{proof}

\begin{lem}\label{lem:gen-n-bel}
Let $A\in\M_n(\bZ)$ be non-singular and let 
$$
M(A;{\bf f}_1,\ldots, {\bf f}_n)=\{\al_{pij}\,\vert\,p\in\cP',\,1\leq i\leq t_p<j\leq n\}
$$ 
be the characteristic of $G_A$ relative to a free basis $\{{\bf f}_1,\ldots, {\bf f}_n\}$ of $\bZ^n$. For ${\bf b}\in\bQ^n$ 
let
${\bf b}=\sum_{k=1}^nb_k{\bf f}_k$, $b_1,\ldots,b_n\in\bQ$. Then 
${\bf b}\in\overline{G}_{A,p}$ for $p\in\cP'$  if and only if 
\bbe\label{eq:b}
b_j-\sum_{i=1}^{t_p}b_i\al_{pij}\in\bZ_p,\,\,t_p+1\leq j\leq n.
\ee
Moreover, 
${\bf b}\in G_A$ if and only if $b_1,\ldots,b_n\in\caR$ and $\eqref{eq:b}$ holds 
for any $p\in\cP'$.
\end{lem}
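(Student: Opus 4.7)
The plan is to unpack both claims directly from the decomposition in Lemma \ref{lem:gen-n} together with the localization formula in Corollary \ref{cor:loc'}. The calculation is elementary linear algebra once the right basis is in hand; the only subtlety is keeping track of which scalars must lie in $\bZ_p$ versus $\bQ_p$.

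\textbf{Step 1 (local criterion, $\Leftarrow$ direction).} Fix $p\in\cP'$ and suppose $b_j-\sum_{i=1}^{t_p}b_i\al_{pij}\in\bZ_p$ for all $t_p+1\le j\le n$. Define $c_i:=b_i\in\bQ\subset\bQ_p$ for $1\le i\le t_p$ and $d_j:=b_j-\sum_{i=1}^{t_p}b_i\al_{pij}\in\bZ_p$ for $t_p+1\le j\le n$. A direct substitution using \eqref{eq:ppp1} shows
$$\sum_{i=1}^{t_p}c_i{\bf x}_{pi}+\sum_{j=t_p+1}^n d_j{\bf f}_j=\sum_{k=1}^n b_k{\bf f}_k={\bf b},$$
which lies in $\Span_{\bQ_p}({\bf x}_{p1},\ldots,{\bf x}_{pt_p})\oplus\Span_{\bZ_p}({\bf f}_{t_p+1},\ldots,{\bf f}_n)=\overline{G}_{A,p}$ by \eqref{eq:ppp} (applied with $K=\bQ$).

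\textbf{Step 2 (local criterion, $\Rightarrow$ direction).} Conversely, if ${\bf b}\in\overline{G}_{A,p}$, write ${\bf b}=\sum_{i=1}^{t_p}c_i{\bf x}_{pi}+\sum_{j=t_p+1}^n d_j{\bf f}_j$ with $c_i\in\bQ_p$ and $d_j\in\bZ_p$. Expanding via \eqref{eq:ppp1} and comparing coefficients in the $\bQ_p$-basis $\{{\bf f}_1,\ldots,{\bf f}_n\}$ (which is a basis because $\{{\bf f}_1,\ldots,{\bf f}_n\}$ is a $\bZ$-basis of $\bZ^n$, hence a $\bZ_p$-basis of $\bZ_p^n$, hence a $\bQ_p$-basis of $\bQ_p^n$), we get $c_i=b_i$ for $1\le i\le t_p$ and $d_j=b_j-\sum_{i=1}^{t_p}b_i\al_{pij}$ for $t_p+1\le j\le n$. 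Since $d_j\in\bZ_p$, condition \eqref{eq:b} holds.

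\textbf{Step 3 (globalization).} By Corollary \ref{cor:loc'}, $G_A=\bigcap_{p\in\cP'}(\caR^n\cap\overline{G}_{A,p})$. Since $\{{\bf f}_1,\ldots,{\bf f}_n\}$ is a $\bZ$-basis of $\bZ^n$, and hence a $\caR$-basis of $\caR^n$, we have ${\bf b}\in\caR^n$ iff every $b_k\in\caR$. Combining this with the local criterion from Steps 1--2 applied at every $p\in\cP'$ yields the second assertion.

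The potential obstacle is purely bookkeeping: one has to be careful that the coefficients $b_i$ of the ``divisible directions'' $1\le i\le t_p$ are genuinely free in $\bQ_p$ while only the corrected coefficients $b_j-\sum_i b_i\al_{pij}$ must be $p$-adic integers. Once the unique decomposition \eqref{eq:ppp} is invoked, both directions are formal, so I expect no real difficulty.
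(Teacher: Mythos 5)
Your proposal is correct and follows essentially the same route as the paper's proof: both use the decomposition $\overline{G}_{A,p}=\Span_{\bQ_p}({\bf x}_{p1},\ldots,{\bf x}_{pt_p})\oplus\Span_{\bZ_p}({\bf f}_{t_p+1},\ldots,{\bf f}_n)$ from Lemma \ref{lem:gen-n} (with $K=\bQ$), identify the coefficients by comparing in the $\bQ_p$-basis $\{{\bf f}_k\}$, and then globalize via Corollary \ref{cor:loc'}. The only stylistic difference is that you spell out the two directions of the local criterion separately, whereas the paper compresses them into a single chain of equivalences.
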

\begin{proof}
It follows easily from \cite[p. 195, Lemma 2]{gm}. 
We repeat the argument adapted to our case. Since $\{{\bf f}_1,\ldots, {\bf f}_n\}$ is a free $\bZ$-basis of $\bZ^n$,
it follows from Corollary \ref{cor:loc'} that ${\bf b}\in G_A$ if and only if 
$b_1,\ldots,b_n\in\caR$ and ${\bf b}\in \overline{G}_{A,p}$ for any $p\in\cP'$.
Since 
$\bZ_p^n\subseteq \overline{G}_{A,p}$,
by Lemma \ref{lem:gen-n}, 
$\{ {\bf x}_{p1},\ldots,{\bf x}_{pt_p},
{\bf f}_{t_p+1},\ldots,{\bf f}_n \}$ is  a basis of $\bQ_p^n$ as a $\bQ_p$-vector space.
Thus,
\bbe\label{eq:loc}
{\bf b}=\sum_{k=1}^nb_k{\bf f}_k=\sum_{i=1}^{t_p}y_i{\bf x}_{pi}+
\sum_{j=t_p+1}^ny_j{\bf f}_j,\quad y_1,\ldots,y_n\in\bQ_p.
\ee
Hence, by Lemma \ref{lem:gen-n} applied to $K=\bQ$, ${\bf b}\in \overline{G}_{A,p}$ if and only if $y_{t_p+1},\ldots,y_n\in\bZ_p$. 
 Comparing coefficients in \eqref{eq:loc} and using \eqref{eq:form},
each $y_i=b_i$ and each $y_j=b_j-\sum_{i=1}^{t_p}b_i\al_{pij}$, hence \eqref{eq:b}.
\end{proof}
We are interested in studying isomorphism classes of groups $G_A$, {\it i.e.,}
when $G_A\cong G_B$ for non-singular $A,B\in\M_n(\bZ)$. If $n=1$, we have $A,B\in\bZ$ and $G_A\cong G_B$ if and only if $A$, $B$ have the same prime divisors in $\bZ$. Therefore, for the rest of the paper we assume $n\geq 2$.

\sbr 

The next result is a criterion for $G_A$, $G_B$ to be isomorphic. It is based on the facts that any isomorphism $\phi$ between $G_A$ and $G_B$ is induced by a matrix $T\in\GL_n(\bQ)$ (\cite[Lemma 3.1]{s}), 
$\phi$ induces a 
$\bZ_p$-module isomorphism between $\overline{G}_{A,p}$ and 
$\overline{G}_{B,p}$ for any prime $p\in\bN$, and, therefore, $\phi$ restricts to an isomorphism between the divisible parts $D_p(A)$, $D_p(B)$ (see \eqref{eq:rand1} for the definition). 

\sbr

Let $A,B\in\M_n(\bZ)$ be non-singular.
Define 
$$
\caR(A)=\bZ\left[\frac{1}{N} \right]=\left\{\frac{x}{N^k}\,\,\Big\vert\,\,x,k\in\bZ  \right\},\quad N=\det A.
$$
By Lemma \ref{l:stand}, without loss of generality, we can assume that we have the characteristics of $G_A$, $G_B$ with respect to the same standard basis $\{{\bf e}_1,\ldots, {\bf e}_n\}$, {\it i.e.},
\begin{eqnarray}\label{eq:char}
&&M(A;{\bf e}_1,\ldots, {\bf e}_n)=\{\al_{pij}(A)\,\vert\,p\in\cP'(A),\,1\leq i\leq t_p(A)<j\leq n\}, \label{eq:char10}\\
&&M(B;{\bf e}_1,\ldots, {\bf e}_n)=\{\al_{pij}(B)\,\vert\,p\in\cP'(B),\,1\leq i\leq t_p(B)<j\leq n\}.\label{eq:char20}
\end{eqnarray}
We say that $T\in\GL_n(\bQ)$ satisfies the condition $(A,B,p)$, $p\in\cP'(B)$, if 
\begin{eqnarray*}
&& j-\text{th column }
\left(\begin{matrix} \ga_{1j} & \cdots & \ga_{nj} \end{matrix}\right)\text{ of }T \text{ satisfies } \\ && \ga_{kj}-\sum_{i=1}^{t_p}\ga_{ij}\al_{pik}(B)\in\bZ_p  \text{ for any } k,j\in\{t_p+1,n\}.
\end{eqnarray*}

\begin{thm}\label{th:main1}
Let $A,B\in\M_n(\bZ)$ be non-singular and let $G_A,G_B$ have characteristics \eqref{eq:char10}, \eqref{eq:char20}, respectively. For $T\in\GL_n(\bQ)$ we have $T(G_A)=G_B$ if and only if 
\begin{eqnarray*}
\cP=\cP(A)&=&\cP(B),\\ 
\cP'=\cP'(A)&=&\cP'(B), \\ 
\caR=\caR(A)&=&\caR(B),\\ 
t_p(A)&=&t_p(B), \quad\forall p\in\cP,
\end{eqnarray*}  
$T\in\GL_n(\caR)$, $T(D_p(A))=D_p(B)$, and $T$ $($resp., $T^{-1})$ satisfies the condition $(A,B,p)$  
$($resp., $(B,A,p))$ for any $p\in\cP'$.
\end{thm}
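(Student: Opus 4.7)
The proof splits into necessity and sufficiency. For necessity, suppose $T(G_A)=G_B$. Then $G_A\cong G_B$ as abstract groups, so Remark \ref{r:p} immediately gives $\cP(A)=\cP(B)$, $\cP'(A)=\cP'(B)$, and $t_p(A)=t_p(B)$ for every prime $p$; since $\caR=\bZ[1/N]$ depends only on the prime divisors of $N=\det A$, the equality $\caR(A)=\caR(B)$ follows. Because $T({\bf e}_j)\in T(G_A)=G_B\subseteq\caR^n$, every column of $T$ has entries in $\caR$; applying the same argument to $T^{-1}$ shows $T\in\GL_n(\caR)$. Tensoring the $\bZ$-linear bijection $T\colon G_A\to G_B$ with $\bZ_p$ yields an isomorphism of $\bZ_p$-modules $\overline{G}_{A,p}\to \overline{G}_{B,p}$, and since the maximal divisible submodule is canonically determined, $T(D_p(A))=D_p(B)$. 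Finally, for each $p\in\cP'$ and each $j\in\{t_p+1,\ldots,n\}$, the $j$-th column $T({\bf e}_j)$ lies in $G_B\subseteq\overline{G}_{B,p}$, and Lemma \ref{lem:gen-n-bel} applied to $B$ with the standard basis produces exactly condition $(A,B,p)$. Running the same argument with $T^{-1}$ in place of $T$ gives $(B,A,p)$.

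For sufficiency, assume all the listed conditions hold. By Corollary \ref{cor:loc'}, $G_A=\bigcap_{p\in\cP'}(\caR^n\cap\overline{G}_{A,p})$ and likewise for $G_B$, so it suffices to show $T(\caR^n)=\caR^n$ and $T(\overline{G}_{A,p})=\overline{G}_{B,p}$ for each $p\in\cP'$. The first identity is immediate from $T\in\GL_n(\caR)$. For the second, I would use Lemma \ref{lem:gen-n} with $K=\bQ$ to decompose
$$
\overline{G}_{A,p}=D_p(A)\oplus\Span_{\bZ_p}({\bf e}_{t_p+1},\ldots,{\bf e}_n).
$$
The divisible summand satisfies $T(D_p(A))=D_p(B)\subseteq\overline{G}_{B,p}$ by hypothesis; for the reduced summand, condition $(A,B,p)$ combined with Lemma \ref{lem:gen-n-bel} applied to $B$ exactly says $T({\bf e}_j)\in\overline{G}_{B,p}$ for $j>t_p$. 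Together these yield $T(\overline{G}_{A,p})\subseteq\overline{G}_{B,p}$, and the symmetric argument for $T^{-1}$ using $(B,A,p)$ gives the reverse inclusion.

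The main obstacle, in my view, is reconciling the apparent asymmetry in the condition $(A,B,p)$, which constrains only the last $n-t_p$ columns of $T$, with the fact that the inclusion $T(\overline{G}_{A,p})\subseteq\overline{G}_{B,p}$ seems a priori to require $T({\bf e}_j)\in\overline{G}_{B,p}$ for every $j$. The resolution is to note that, using Lemma \ref{lem:gen-n} with the standard basis, for $j\leq t_p$ one has
$$
{\bf e}_j={\bf x}_{pj}(A)-\sum_{k=t_p+1}^n\al_{pjk}(A){\bf e}_k,
$$
so that $T({\bf e}_j)$ can be expressed as a $\bZ_p$-linear combination of $T({\bf x}_{pj}(A))\in D_p(B)\subseteq\overline{G}_{B,p}$ and the columns $T({\bf e}_k)$ for $k>t_p$. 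Thus the first $t_p$ columns are automatically controlled once $T(D_p(A))=D_p(B)$ holds and condition $(A,B,p)$ is imposed on the remaining columns, which is precisely why the listed conditions are both necessary and sufficient.
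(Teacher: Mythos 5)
Your proof is correct and follows essentially the same route as the paper: reduce via Corollary \ref{cor:loc'} to the local statements $T(\overline{G}_{A,p})=\overline{G}_{B,p}$, use the decomposition of Lemma \ref{lem:gen-n}, and translate the reduced-part condition through Lemma \ref{lem:gen-n-bel} into $(A,B,p)$ and $(B,A,p)$. The paper packages the reduced-part step as a block upper-triangular form of $T$ with $T_2\in\GL_{n-t}(\bZ_p)$, while you argue the two inclusions directly and then explain why the first $t_p$ columns need no separate constraint; the content is the same.
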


\begin{proof}
By Corollary \ref{cor:loc}, $T(G_A)=G_B$ if and only if for any prime $p\in\bN$
$$
T(\overline{G}_{A,p})=\overline{G}_{B,p}.
$$
In particular, using Proposition \ref{prop:1}, if $T(G_A)=G_B$, then 
$\cP(A)=\cP(B)$, $\cP'(A)=\cP'(B)$, $t_p(A)=t_p(B)$, and hence $\caR(A)=\caR(B)$. Also, $T\in\GL_n(\caR)$ by \cite[Lemma 3.4]{s}.

\sbr

By Lemma \ref{lem:gen-n} applied to $K=\bQ$,
\begin{eqnarray}
\overline{G}_{A,p}&=&D_p(A)\oplus\Span_{\bZ_p}({\bf e}_{t_1+1},\ldots,{\bf e}_n),\label{eq:dec1}
\\
\overline{G}_{B,p}&=&D_p(B)\oplus\Span_{\bZ_p}({\bf e}_{t_2+1},\ldots,{\bf e}_n),\label{eq:dec2}
\end{eqnarray}
where $t_1=t_p(A)$, $t_2=t_p(B)$, and $D_p(A)\cong\bQ_p^{t_1}$, $D_p(B)\cong\bQ_p^{t_2}$ as $\bZ_p$-modules.
Therefore, $T$ defines a $\bZ_p$-module
isomorphism from 
$\overline{G}_{A,p}$ to $\overline{G}_{B,p}$ if and only if $t=t_1=t_2$, 
$T(D_p(A))=D_p(B)$, and with respect to the decompositions \eqref{eq:dec1} and \eqref{eq:dec2},  $T$ has the form
$$
\tilde T=\left(\begin{matrix}
T_1 & * \\
0 & T_2 
\end{matrix}
\right),\quad T_1\in\GL_{t}(\bQ_p),\,\,T_2\in\GL_{n-t}(\bZ_p).
$$
Note that $T_2\in\GL_{n-t}(\bZ_p)$ if and only if 
$T{\bf e}_j\in\overline{G}_{B,p}$ and $T^{-1}{\bf e}_j\in\overline{G}_{A,p}$ for any $j\in\{t+1,\ldots, n\}$, which is equivalent to the conditions $(A,B,p)$, $(B,A,p)$ for columns of $T$, $T^{-1}$, respectively, by Lemma \ref{lem:gen-n-bel}.
\end{proof}

\section{Generalized eigenvectors}\label{s:eigen}
Let $A,B\in\M_n(\bZ)$ be non-singular. 
Using Theorem \ref{th:main1}, one can already check whether $G_A\cong G_B$ and also find such isomorphisms if they exist. 
In this section, we make Theorem \ref{th:main1} even more practical  by 
describing the $\bZ_p$-divisible part $D_p(A)$ of $G_A\otimes_{\bZ}\bZ_p$ in terms of generalized eigenvectors of $A$.

\sbr 

Throughout the text, $\overline{\bQ}$ denotes a fixed algebraic closure of $\bQ$. 
Let $K\subset \overline{\bQ}$ be a finite extension of $\bQ$ that contains all the eigenvalues of $A$. Let $\cO_K$ denote the ring of integers of $K$.  Throughout the paper,  
$\la_1,\ldots,\la_n\in\cO_K$ denote (not necessarily distinct) eigenvalues of $A$ and 
$\{{\bf u}_1,\ldots,{\bf u}_n\}$ denotes a Jordan canonical basis of $A$. 
Without loss of generality, we can assume that each
${\bf u}_i\in(\cO_K)^n$, $i=1,\ldots,n$. 
 For a prime $p\in\bN$ let $\fp$ be a prime ideal of $\cO_K$ above $p$ and let $X_{A,\fp}$ 
denote the span over $K$ of vectors in $\{{\bf u}_1,\ldots,{\bf u}_n\}$ 
corresponding to eigenvalues divisible by $\fp$. 
Note that 
$$
\dim_K X_{A,\fp}=t_p(A),
$$
where $t_p=t_p(A)$ denotes the multiplicity of zero in the reduction 
$\bar{h}_A$ modulo $p$ of the characteristic polynomial $h_A$ of $A$, $0\leq t_p\leq n$. Indeed, 
$\dim_K X_{A,\fp}$ is the number of 
eigenvalues (with multiplicities) of $A$ divisible by $\fp$. One can write $h_A=(x-\la_1)\cdots (x-\la_n)$ over 
$\cO_K$. Considering the reduction $\bar{h}_A$ of $h_A$ modulo $\fp$, we see that the number of eigenvalues of $A$ divisible by $\fp$ is equal to the multiplicity 
$t_p$ of zero in $\bar{h}_A$. Equivalently, $X_{A,\fp}$ is generated over $K$ by generalized 
$\la$-eigenvectors of $A$ for any eigenvalue $\la$ of $A$ divisible by $\fp$.

\begin{lem}\label{l:div}
Let $A\in\M_n(\bZ)$ be non-singular.
Let $p\in\bN$ be prime and let $\fp$ be a prime ideal of 
$\cO_K$ above $p$. Let $\cO_{\fp}$ denote the ring of integers of $K_{\fp}$, the completion of $K$ with respect to $\fp$. Then, considered as subsets of $K_{\fp}^n$,
$$
D_p(A)\otimes_{\bZ_p}\cO_{\fp}=X_{A,\fp}\otimes_K K_{\fp},
$$
{\it i.e.}, upon the extension of constants from $\bZ_p$ to $\cO_{\fp}$, the divisible part of
$\overline{G}_{A,p}$ is generated over $K_{\fp}$ by generalized eigenvectors of $A$ $($considered as elements of $K_{\fp}^n$ via the embedding $K\hrar K_{\fp})$ corresponding to eigenvalues divisible by $\fp$.
\end{lem}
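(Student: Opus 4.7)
The plan is to prove both sides equal $\Span_{K_{\fp}}({\bf w}_{p1},\ldots,{\bf w}_{pt_p})$, where the ${\bf w}_{pi}$ are the first $t_p$ columns of the matrix $W_p \in \GL_n(\bZ_p)$ provided by Lemma \ref{l:repr}. The left-hand side is easy: Lemma \ref{l:repr} gives $D_p(A) = \Span_{\bQ_p}({\bf w}_{p1},\ldots,{\bf w}_{pt_p})$, and since $\cO_{\fp}$ is free over $\bZ_p$ with $\bQ_p \tens_{\bZ_p} \cO_{\fp} = \cO_{\fp}[1/p] = K_{\fp}$ (because $p$ differs from a power of a uniformizer by a unit), scalar extension yields $D_p(A) \tens_{\bZ_p} \cO_{\fp} = \Span_{K_{\fp}}({\bf w}_{p1},\ldots,{\bf w}_{pt_p})$ inside $K_{\fp}^n$.

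To match the right-hand side, I would pass to the block-triangular conjugate $\tilde A = W_p^{-1} A W_p$ with diagonal blocks $A_1 \in \M_{t_p}(\bZ_p)$ and $A_2 \in \GL_{n-t_p}(\bZ_p)$, and characteristic polynomials $h_1$ and $h_2$ respectively, so $h_A = h_1 h_2$ over $\bZ_p$. Both $h_1$ and $h_2$ split over $K_{\fp}$ (as $h_A$ does), so all eigenvalues of $A_1$ and $A_2$ lie in $K_{\fp}$ and, counted with multiplicity, they together exhaust the eigenvalues of $A$. Now $h_1 \equiv x^{t_p} \pmod p$ forces every eigenvalue of $A_1$ to reduce to $0$ modulo the maximal ideal of $\cO_{\fp}$, hence to be divisible by $\fp$; conversely, $A_2 \in \GL_{n-t_p}(\bZ_p)$ has $\det A_2 \in \bZ_p^{\times}$, so every eigenvalue of $A_2$ is a $\fp$-unit. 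This simultaneously identifies the set of eigenvalues of $A$ divisible by $\fp$ with the spectrum of $A_1$, and shows that the spectra of $A_1$ and $A_2$ are disjoint.

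With disjoint spectra in hand, if $\la$ is an eigenvalue of $A_1$ and $v = (v_1,v_2)^t \in K_{\fp}^{t_p} \oplus K_{\fp}^{n-t_p}$ satisfies $(\tilde A - \la I)^k v = 0$, then the block-upper-triangular form of $(\tilde A - \la I)^k$ forces $(A_2 - \la I)^k v_2 = 0$, and since $A_2 - \la I$ is invertible this gives $v_2 = 0$. Hence every generalized $\la$-eigenvector of $\tilde A$ lies in $\Span_{K_{\fp}}({\bf e}_1,\ldots,{\bf e}_{t_p})$; conversely this span is $\tilde A$-stable with characteristic polynomial $h_1$, so it coincides with the direct sum of the generalized eigenspaces of $\tilde A$ attached to eigenvalues of $A_1$. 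Since $v \mapsto W_p v$ carries generalized $\la$-eigenvectors of $\tilde A$ bijectively onto those of $A$, applying $W_p$ yields $X_{A,\fp} \tens_K K_{\fp} = \Span_{K_{\fp}}({\bf w}_{p1},\ldots,{\bf w}_{pt_p})$, matching the left-hand side. The main technical step is the disjoint-spectra argument that localizes the generalized eigenspace to the top-left block; everything else is a straightforward combination of Lemma \ref{l:repr}, Proposition \ref{prop:1}, and base change from $\bZ_p$ to $\cO_{\fp}$.
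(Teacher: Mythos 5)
Your proof is correct, but it takes a genuinely different route than the paper's. The paper argues by a direct containment $X_{A,\fp}\otimes_K K_{\fp}\subseteq \overline{G}_{A,p}\otimes_{\bZ_p}\cO_{\fp}$: it shows, by a double induction on the rank of a generalized eigenvector ${\bf u}$ and on the exponent $k$, that $\la_i^{-k}{\bf u}$ lands in $\overline{G}_{A,p}\otimes_{\bZ_p}\cO_{\fp}$ (using the recursion $\la_i^{-k}A^{-1}{\bf v}=\la_i^{-k}{\bf u}-\la_i^{-k+1}A^{-1}{\bf u}$ for ${\bf v}=(A-\la_i\operatorname{Id}){\bf u}$), then observes that since the left side is a divisible $\cO_{\fp}$-module it must sit inside $D_p(A)\otimes_{\bZ_p}\cO_{\fp}$, and finally concludes equality by a dimension count. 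You instead identify both sides with $\Span_{K_{\fp}}({\bf w}_{p1},\ldots,{\bf w}_{pt_p})$: the left side by base change from $\bZ_p$ to $\cO_{\fp}$ applied to \eqref{eq:rand1}, and the right side by exploiting the block-triangular conjugate $W_p^{-1}AW_p=\left(\begin{smallmatrix}A_1 & * \\ 0 & A_2\end{smallmatrix}\right)$ from Lemma \ref{l:repr}, showing the spectra of $A_1$ (all $\fp$-divisible, since $h_1\equiv x^{t_p}\pmod p$) and of $A_2$ (all $\fp$-units, since $\det A_2\in\bZ_p^\times$) are disjoint, so that the generalized eigenspaces for $\fp$-divisible eigenvalues are precisely $\Span_{K_{\fp}}({\bf e}_1,\ldots,{\bf e}_{t_p})$ in the conjugated coordinates. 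Your argument buys a more conceptual, coordinate-explicit identification and avoids the two nested inductions, at the cost of relying more heavily on the precise block structure furnished by Lemma \ref{l:repr} (the paper only uses the dimension of the divisible part). Both are sound; neither yields anything the other does not, since the paper proves and uses the disjoint-spectra fact elsewhere anyway.
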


\begin{proof}
Let $(\pi)\subset\cO_{\fp}$ denote the prime ideal of $\cO_{\fp}$. Via $K\hrar K_{\fp}$, we have 
$\la_1,\ldots,\la_n\in\cO_{\fp}$ and without loss of generality, we can assume 
$\la_1,\ldots,\la_{t_p}\in (\pi)$,  
$\la_{t_p+1},\ldots,\la_n\in (\cO_{\fp})^{\times}$. Thus,
$Y=X_{A,\fp}\otimes_K K_{\fp}$ is generated over $K_{\fp}$ by generalized eigenvectors of $A$ corresponding to $\la_i$, $i=1,\ldots,t_p$. Let
$$
Z=\overline{G}_{A,p}\otimes_{\bZ_p}\cO_{\fp}=
\left.\left\{A^{-k}{\bf x}\,\right\vert\, {\bf x}\in\cO_{\fp}^n,\,k\in\bZ \right\},\quad
\cO_{\fp}^n\subseteq Z\subseteq K_{\fp}^n.
$$
Using Lemma \ref{l:repr}, we have
$$
Z=\overline{G}_{A,p}\otimes_{\bZ_p}\cO_{\fp}=
\left(D_p(A)\otimes_{\bZ_p}\cO_{\fp}\right)\oplus 
\left(R_p(A)\otimes_{\bZ_p}\cO_{\fp}\right),
$$
where, as $\cO_{\fp}$-modules,
$$
D_p(A)\otimes_{\bZ_p}\cO_{\fp}\cong K_{\fp}^{t_p},\quad
R_p(A)\otimes_{\bZ_p}\cO_{\fp}\cong \cO_{\fp}^{n-t_p}.
$$
We first prove $Y\subseteq Z$, by showing $\Span_{K_{\fp}}({\bf u})\subseteq Z$ for any generalized eigenvector ${\bf u}$ corresponding to $\la_i$, $i=1,\ldots,t_p$. The proof is by induction on the rank of ${\bf u}$. Without loss of generality, we can assume 
${\bf u}\in \cO_{\fp}^{n}$. If $\rank {\bf u}=1$, then ${\bf u}$ is an eigenvector of $A$ corresponding to $\la_i$ and hence $\la_i^{-k}{\bf u}=A^{-k}{\bf u}\in Z$ for any $k\in\bZ$. Since $\la_i=\pi^{\alpha}\beta$ for $\alpha\in\bN$, $\be\in(\cO_{\fp})^{\times}$, and $Z$ is an $\cO_{\fp}$-module, we have
$\Span_{K_{\fp}}({\bf u})\subseteq Z$. Assume now $\rank {\bf u}=m$, $m>1$. Then, 
$(A-\la_i\operatorname{Id})^m{\bf u}={\bf 0}$, where $\operatorname{Id}$ denotes the $n\times n$-identity matrix, and ${\bf v}=(A-\la_i\operatorname{Id}){\bf u}$ is of rank $m-1$. By induction on $m$,
$\Span_{K_{\fp}}({\bf v})\subseteq Z$. We have ${\bf v}=A{\bf u}-\la_i{\bf u}$ and hence
\bbe\label{eq:tick}
\la_i^{-k}A^{-1}{\bf v}=\la_i^{-k}{\bf u}-\la_i^{-k+1}A^{-1}{\bf u},\quad\quad k\in\bZ.
\ee
From \eqref{eq:tick}, we can show $\la_i^{-k}{\bf u}\in Z$ by induction on $k\geq 0$. Indeed, for $k=0$, we have
${\bf u}\in Z$,
since 
${\bf u}\in\cO_{\fp}^n$. Assume $\la_i^{-(k-1)}{\bf u}\in Z$. Then
$A^{-1}(\la_i^{-(k-1)}{\bf u})=\la_i^{-k+1}A^{-1}{\bf u}\in Z$, since $Z$ is $A^{-1}$-invariant.
Analogously, $\la_i^{-k}A^{-1}{\bf v}\in Z$, since $\la_i^{-k}{\bf v}\in Z$ by induction on the rank.
Thus, 
$\la_i^{-k}{\bf u}\in Z$ by \eqref{eq:tick}. As before, it shows $\Span_{K_{\fp}}({\bf u})\subseteq Z$.
Here ${\bf u}$ is a generalized eigenvector of an arbitrary rank corresponding to an eigenvalue of $A$ divisible by $\fp$ and hence $Y\subseteq Z$. Finally, since 
$Y$ is
a divisible $\cO_{\fp}$-module, it is contained inside the divisible part of $Z$, {\it i.e.}, $Y\subseteq D_p(A)\otimes_{\bZ_p}\cO_{\fp}$. 
Since both have the same dimension $t_p$ over $K_{\fp}$, they coincide and the claim follows.
\end{proof}
\begin{rem}
Note that we cannot claim that the reduced part 
$R_p(A)\otimes_{\bZ_p}\cO_{\fp}$ of 
$\overline{G}_{A,p}\otimes_{\bZ_p}\cO_{\fp}$ is generated by generalized eigenvectors of $A$ over $\cO_{\fp}$, since in general, $\{ {\bf u}_1,\ldots,{\bf u}_{n}\}$ is not 
a free basis of $\cO_{\fp}^n$. Equivalently, the matrix 
$\left(\begin{matrix}
{\bf u}_{1}  & \ldots & {\bf u}_{n}
\end{matrix}
\right)$ might not be in $\GL_n(\cO_{\fp})$.
\end{rem}


Combining Lemma \ref{l:div} with Theorem \ref{th:main1}, we get a criterion for $G_A\cong G_B$ in terms of generalized eigenvectors of $A$ and $B$. 

\begin{thm}\label{th:prelim0}
Let $A,B\in\M_n(\bZ)$ be non-singular, let $K\subset \overline{\bQ}$ be any finite  extension of $\bQ$ that contains the eigenvalues of both $A$ and $B$, and let $G_A$, $G_B$ have characteristics 
\eqref{eq:char10}, \eqref{eq:char20}, respectively. 
For $T\in\GL_n(\bQ)$ we have $T(G_A)=G_B$ if and only if 
\begin{eqnarray*}
\cP=\cP(A)&=&\cP(B),\\ 
\cP'=\cP'(A)&=&\cP'(B), \\ 
\caR=\caR(A)&=&\caR(B),\\ 
t_p(A)&=&t_p(B), \quad\forall p\in\cP,
\end{eqnarray*}  
$T\in\GL_n(\caR)$, 
for any $p\in\cP'$ and a prime ideal $\fp$ of $\cO_K$ above $p$
we have
\begin{eqnarray*}
&& T(X_{A,\fp})=X_{B,\fp},
\end{eqnarray*}
and $T$ $($resp., $T^{-1})$ satisfies the condition $(A,B,p)$  
$($resp., $(B,A,p))$ for any $p\in\cP'$.
\end{thm}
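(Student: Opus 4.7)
The plan is to deduce Theorem~\ref{th:prelim0} from Theorem~\ref{th:main1} by using Lemma~\ref{l:div} to translate the local divisible-part condition $T(D_p(A))=D_p(B)$ into the eigenvector-span condition $T(X_{A,\fp})=X_{B,\fp}$. All the other listed equalities and conditions ($\cP(A)=\cP(B)$, $\cP'(A)=\cP'(B)$, $\caR(A)=\caR(B)$, $t_p(A)=t_p(B)$, $T\in\GL_n(\caR)$, and the column conditions $(A,B,p)$ and $(B,A,p)$) are carried over verbatim from Theorem~\ref{th:main1}. Thus, assuming all these equalities, it suffices to prove that for any fixed $p\in\cP'$ and prime ideal $\fp$ of $\cO_K$ above $p$,
\[
T(D_p(A))=D_p(B)\,\,\Longleftrightarrow\,\, T(X_{A,\fp})=X_{B,\fp}.
\]

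For $(\Rightarrow)$, I would tensor both sides by $\cO_{\fp}$ over $\bZ_p$. Since $T$ has entries in $\bQ\subset K_{\fp}$, the action of $T$ commutes with base change, so Lemma~\ref{l:div} immediately yields $T(X_{A,\fp}\otimes_K K_{\fp})=X_{B,\fp}\otimes_K K_{\fp}$ as $K_{\fp}$-subspaces of $K_{\fp}^n$. For the converse, tensoring $T(X_{A,\fp})=X_{B,\fp}$ with $K_{\fp}$ over $K$ and applying Lemma~\ref{l:div} (together with the fact that $D_p(A)\cong \bQ_p^{t_p}$ is already a $\bQ_p$-vector space, so $D_p(A)\otimes_{\bZ_p}\cO_{\fp}=D_p(A)\otimes_{\bQ_p}K_{\fp}$, and likewise for $B$) gives $T(D_p(A)\otimes_{\bQ_p}K_{\fp})=D_p(B)\otimes_{\bQ_p}K_{\fp}$ inside $K_{\fp}^n$.

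In both directions, the only step that is not automatic is recovering an equality of subspaces over a base field from equality of their extensions to a larger field, for the extensions $K\subseteq K_{\fp}$ and $\bQ_p\subseteq K_{\fp}$ respectively. This will be my main (and essentially only) obstacle, but it is a routine descent fact: for subspaces $V,W\subset F^n$ of an $F$-vector space with $F\subseteq E$ a field extension, $V\otimes_F E=W\otimes_F E$ inside $E^n$ forces $V=W$, because $V=(V\otimes_F E)\cap F^n$ — equivalently, a linear system with coefficients in $F$ has an $E$-solution if and only if it has an $F$-solution. Applying this with $(F,E)=(K,K_{\fp})$ for the forward direction and $(F,E)=(\bQ_p,K_{\fp})$ for the reverse direction completes the equivalence, and Theorem~\ref{th:main1} then yields Theorem~\ref{th:prelim0}.
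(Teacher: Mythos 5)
Your proposal is correct and takes essentially the same route as the paper: reduce to Theorem~\ref{th:main1}, and for each $p\in\cP'$ convert $T(D_p(A))=D_p(B)$ into $T(X_{A,\fp})=X_{B,\fp}$ by extending scalars to $\cO_{\fp}$, invoking Lemma~\ref{l:div}, and descending back using the fact that $T$ is defined over $\bQ$. The only difference is cosmetic: you spell out the descent step (equality of $F$-subspaces follows from equality of their $E$-extensions, via $V=(V\otimes_F E)\cap F^n$) that the paper compresses into the phrase ``since $T$ is defined over $\bQ$.''
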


\begin{proof}
We have  
$T(D_p(A))=D_p(B)$ if and only if 
$T(D_p(A)\otimes_{\bZ_p}\cO_{\fp})=D_p(B)\otimes_{\bZ_p}\cO_{\fp}$, since $T$ is defined over $\bQ$. By Lemma \ref{l:div}, 
$D_p(A)\otimes_{\bZ_p}\cO_{\fp}=X_{A,\fp}\otimes_K K_{\fp}$ for any prime ideal $\fp$ of $\cO_K$ above $p$. Finally, $T(X_{A,\fp}\otimes_K K_{\fp})=X_{B,\fp}\otimes_K K_{\fp}$ if and only if $T(X_{A,\fp})=X_{B,\fp}$, since $T$ is defined over $\bQ$. Thus, the theorem follows from Theorem \ref{th:main1}.
\end{proof}

\begin{rem}\label{r:char}
We find Theorem \ref{th:prelim0} more practical than Theorem \ref{th:main1}. The difference between the two is that to find a characteristic of $G_A$ using Theorem \ref{th:main1}, for each $p$ one finds
a possibly different matrix $W_p$ and then modifies the rows according to the procedure described in Lemma \ref{l:vect} to get a basis $\{{\bf f}_1,\ldots,{\bf f}_n\}$ (see Remark \ref{rem:base}). Whereas, in 
Theorem \ref{th:prelim0}, we can start with a Jordan canonical basis of $A$ (which does not depend on $p$) and then modify it using the same procedure (see Example \ref{ex:aaat} below). 
By Lemma \ref{lem:gen-n} (and, possibly, Lemma \ref{l:stand}), up to an isomorphism of $G_A$, both ways produce the same characteristic. 

\end{rem}

\section{Reducible characteristic polynomials}
Let $A,B\in\M_n(\bZ)$ be non-singular with $G_A$, $G_B$ defined by \eqref{eq:opr}.
In this section we explore necessary conditions for $G_A\cong G_B$, when at least one of the characteristic polynomials of $A$, $B$ is reducible in $\bZ[t]$.
\subsection{Irreducible isomorphisms} We start by introducing the notion of  {\it an irreducible isomorphism} between $G_A$ and $G_B$. 
 Let $K\subset\overline{\bQ}$ denote a finite Galois extension of $\bQ$ that contains all the eigenvalues of $A$ and $B$ and let 
$G=\Gal(K/\bQ)$. For an eigenvalue $\la\in K$ of $A$ let $K(A,\la)$ denote the generalized $\la$-eigenspace of $A$. By definition, $K(A,\la)$ is generated over $K$ by all generalized eigenvectors of $A$ corresponding to $\la$ or, equivalently,   by vectors in  a Jordan canonical basis of $A$ corresponding to $\la$. Let $h_A\in\bZ[t]$ denote the characteristic polynomial of $A$. Assume 
$h_A=fg$ for non-constant $f,g\in\bZ[t]$. By Theorem \ref{th:trig} below, there exists 
$S\in\GL_n(\bZ)$ such that
$$
SAS^{-1}=\left(\begin{matrix}
A' & * \\
0 & A''
\end{matrix}
\right),
$$
where $A',A''$ are matrices with integer coefficients of appropriate sizes such that the characteristic polynomial of $A'$ (resp., $A''$) is $f$ (resp., $g$). 
We have a natural embedding $G_{A'}\hrar G_{SAS^{-1}}$ induced by ${\bf x}\mapsto \left(\begin{matrix}
{\bf x}  &  {\bf 0}\end{matrix}\right)$, where ${\bf x}\in\bQ^{n_1}$, $n_1=\deg f$, and ${\bf 0}$ is the zero vector in $\bQ^{n-n_1}$. There is an exact sequence
\bbe\label{eq:seq10}
0\rar G_{A'}\rar G_{A}\rar G_{A''}\rar 0,
\ee
since $S(G_A)=G_{SAS^{-1}}$. We denote $G_{A'}=G_f$, $G_{A''}=G_g$. 
\begin{defin}\label{def:1}
We say that an isomorphism $T:G_A\rar G_B$ is {\em reducible} if there exist $S,L\in\GL_n(\bZ)$ and non-constant $f,g,f',g'\in\bZ[t]$ such that $h_A=fg$, $h_B=f'g'$, 
$$
SAS^{-1}=\left(\begin{matrix}
A' & * \\
0 & A''
\end{matrix}
\right),\quad
LBL^{-1}=\left(\begin{matrix}
B' & * \\
0 & B''
\end{matrix}
\right),
$$
$h_{A'}=f$, $h_{B'}=f'$, $\deg f=\deg f'$, and $LTS^{-1}(G_f)=G_{f'}$.
Otherwise, we say that $T$ is {\em irreducible}.
\end{defin}

Clearly, if the characteristic polynomial of $A$ or $B$ is irreducible, then an isomorphism $T:G_A\rar G_B$ is irreducible. The converse is not true in general. For instance,
$$
{A}=\left(\begin{matrix}
2 & 1   \\
0 & 2  
\end{matrix}
\right),\quad
{B}=\left(\begin{matrix}
4 & 1   \\
0 & 4  
\end{matrix}
\right),
\quad
{T}=\left(\begin{matrix}
1 & 2   \\
1 & 4  
\end{matrix}
\right),
$$
where both characteristic polynomials $h_A$, $h_B$ are reducible, but $T:G_A\rar G_B$ is an irreducible isomorphism.
Indeed, any $S,L,T\in\GL_2(\bQ)$ satisfying the conditions in Definition \ref{def:1} have to be upper-triangular. However, 
for $\caR=\bZ\left[\frac{1}{2}\right]$ any $T\in\GL_2(\caR)$  is an isomorphism between $G_A$ and $G_B$ by Corollary \ref{cor:loc} and Proposition \ref{prop:1}.

\sbr

Note that $LTS^{-1}(G_f)=G_{f'}$ if and only if
\bbe\label{eq:jord}
T\left(\sum_{\la} K(A,\la)\right)=\sum_{\mu} K(B,\mu),
\ee
where $\la\in\overline{\bQ}$ (resp., $\mu\in\overline{\bQ}$) runs through all the roots of $f$ 
(resp., $f'$).
Also, $LTS^{-1}(G_f)=G_{f'}$ implies $LTS^{-1}(G_g)=G_{g'}$. Thus, if $T$ is reducible, then
$G_f\cong G_{f'}$, $G_g\cong G_{g'}$. In other words, if $h_A=fg$ and there is a reducible isomorphism 
$G_A\cong G_B$, 
then $G_f\cong G_{f'}$, $G_g\cong G_{g'}$ for some $f',g'\in\bZ[t]$ such that $h_B=f'g'$. The converse is not true in general. 
\begin{example}
Let
$$
{A}=\left(\begin{matrix}
2 & 0   \\
0 & 5  
\end{matrix}
\right),\quad
{B}=\left(\begin{matrix}
2 & 4   \\
0 & 5  
\end{matrix}
\right).
$$
Here, in the notation of Definition \ref{def:1}, $f(t)=f'(t)=t-2$, $g(t)=g'(t)=t-5$, 
$G_A\cong G_f\oplus G_{g}$, where $G_f=\{\frac{k}{2^n}\,\vert\, k,n\in\bZ\}$, 
$G_g=\{\frac{k}{5^n}\,\vert\, k,n\in\bZ\}$. Using Theorem \ref{th:main1} together with Lemma \ref{l:div}, one can show $G_A\not\cong G_B$, hence the sequence 
$$
0\rar G_{f'}\rar G_B\rar G_{g'}\rar 0
$$ 
does not split. This is also an example when $G_{f}\cong G_{f'}$, $G_{g}\cong G_{g'}$, but $G_A\not\cong G_B$. 
\end{example}

\subsection{Splitting sequences}
There is a case, however, when sequence \eqref{eq:seq10} splits, namely, when $\det A''=\pm 1$. Then, $G_{A''}=\bZ^{k}$ is a free $\bZ$-module, $A''\in\M_k(\bZ)$. More precisely, let $A\in\M_n(\bZ)$ be non-singular with characteristic polynomial $h_A\in\bZ[t]$. Let $h_A=fg$,
where $f,g\in\bZ[t]$ are non-constant, $f=f_1f_2\cdots f_s$, 
$f_i(0)\ne\pm 1$ for each irreducible component $f_i\in\bZ[t]$ of $f$, $1\leq i\leq s$,
$g(0)=\pm 1$. Then $G_{g}=\bZ^{k}$, $k=k(A)=\deg g$, 
and hence the sequence
$$
0\rar G_{f}\rar G_{A}\rar G_{g}\rar 0
$$
splits, {\it i.e.}, 
\bbe\label{eq:sas1}
G_{A}\cong G_{f}\oplus\bZ^{k(A)}.
\ee

\begin{lem}\label{l:split}
Let $A,B\in\M_n(\bZ)$ be non-singular with corresponding characteristic polynomials $h_A,h_B\in\bZ[t]$. Then 
$$
G_A\cong G_B\iff  k(A)=k(B),\quad G_{f}\cong G_{f'},
$$
where $h_B=f'g'$, $r(0)\ne\pm 1$ for each irreducible component $r\in\bZ[t]$ of $f'$, and 
$g'(0)=\pm 1$.
\end{lem}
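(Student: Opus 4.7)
My plan is to prove the two directions separately: the reverse is immediate from \eqref{eq:sas1}, while the forward direction requires an intrinsic characterization of both $k(A)$ and the isomorphism type of $G_f$ via a divisibility construction preserved by any abstract isomorphism. For $(\Leftarrow)$, I would simply note that $G_A\cong G_f\oplus\bZ^{k(A)}\cong G_{f'}\oplus\bZ^{k(B)}\cong G_B$ by \eqref{eq:sas1}.

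For $(\Rightarrow)$ my first step would be to observe that for every prime $p$, the characteristic subgroup $p^{\infty}G_A:=\bigcap_{n\geq 0}p^nG_A$ of elements infinitely $p$-divisible in $G_A$ equals $D_p(A)\cap G_A$ (intersection taken in $\bQ^n$). The inclusion $\subseteq$ is automatic, since an element infinitely $p$-divisible in $G_A$ is infinitely $p$-divisible in $\overline{G}_{A,p}$ and hence lies in its divisible part. For $\supseteq$, given $x\in D_p(A)\cap G_A$ and $n\geq 1$, the element $x/p^n$ lies in $D_p(A)\cap\bQ^n\subseteq\overline{G}_{A,p}\cap\bQ^n=G_{A,p}$ by Lemma \ref{lem:f2}, so some $m$ coprime to $p$ satisfies $mx/p^n\in G_A$, and the Bezout identity $1=am+bp^n$ gives $x=ap^n(mx/p^n)+bp^n x\in p^nG_A$.

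Next I would define $M(G_A):=\sum_{p\in\cP(A)}p^{\infty}G_A$, a characteristic subgroup of $G_A$, and show it equals the canonical subgroup $G_f\subseteq G_A$ from \eqref{eq:sas1} (identified as $G_A$ intersected with the $A$-invariant $\bQ$-span of generalized eigenvectors of $A$ for non-unit eigenvalues). The inclusion $M(G_A)\subseteq G_f$ follows because the splitting $G_A=G_f\oplus\bZ^{k(A)}$ induces $\overline{G}_{A,p}=(G_f\otimes\bZ_p)\oplus\bZ_p^{k(A)}$ with reduced second summand, so $D_p(A)$ sits entirely inside $G_f\otimes\bZ_p$, whence $D_p(A)\cap G_A\subseteq G_f$. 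For $G_f\subseteq M(G_A)$, the case $\cP(A)\neq\cP'(A)$ is trivial: some prime $q$ has $t_q(A)=n$, forcing $g=1$, $k(A)=0$, and $G_A=G_f=q^{\infty}G_A$. In the case $\cP(A)=\cP'(A)$, I would apply Lemma \ref{l:minmax} to a matrix $A'$ with $h_{A'}=f$ (noting $\cP(A')=\cP(A)$ and $t_p(A')=t_p(A)$ since $g(0)=\pm 1$) to obtain $G_f/\bZ^{\deg f}\cong\bigoplus_p\bZ(p^{\infty})^{t_p(A)}$, and then invoke Lemma \ref{l:div} together with the explicit generators of Lemma \ref{l:repr'} to express each element of $G_f$ as a sum of pieces drawn from the various $p^{\infty}G_A$.

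Once $M(G_A)=G_f$ and $M(G_B)=G_{f'}$ are established, any isomorphism $\phi:G_A\to G_B$ sends the characteristic subgroup $M(G_A)$ onto $M(G_B)$, yielding $G_f\cong G_{f'}$, and induces an isomorphism on quotients $\bZ^{k(A)}\cong G_A/M(G_A)\cong G_B/M(G_B)\cong\bZ^{k(B)}$, whence $k(A)=k(B)$. The main obstacle is the reverse inclusion $G_f\subseteq M(G_A)$ when $\cP(A)=\cP'(A)$: the generators $p^{-k}{\bf x}_{pi}^{(k)}$ produced by Lemma \ref{l:repr'} are $p$-divisible in $G_A$ only modulo $\bZ^{\deg f}$, so the bookkeeping argument must show that the resulting integer-vector corrections can themselves be absorbed as sums of infinitely-divisible elements coming from other primes in $\cP(A)$.
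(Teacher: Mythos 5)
Your reverse implication $(\Leftarrow)$ is correct and coincides with the paper's. The forward direction, however, rests on a false identity. The observation $p^\infty G_A=D_p(A)\cap G_A$ is fine (the Bezout step works), but the key claim $M(G_A):=\sum_{p\in\cP(A)}p^\infty G_A=G_f$ is not a delicate bookkeeping issue — it is simply false. The intersection $D_p(A)\cap G_A$ is taken inside $\bQ_p^n$, where $D_p(A)$ is a $\bQ_p$-subspace that in general meets $\bQ^n$ only in $\{0\}$. Concretely, take $h_A=(t^2+t+4)(t-1)$ with $A$ block-diagonal, so $\cP(A)=\cP'(A)=\{2\}$, $t_2=1$, $k(A)=1$, $\deg f=2$. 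Over $\bQ_2$ the factor $t^2+t+4$ splits as $(t-\la)(t-\la')$ with $\la\in 4\bZ_2\setminus\bQ$ and $\la'\in\bZ_2^\times$, and $D_2(A)$ is the line $\bQ_2\cdot(1+\la,\,1,\,0)$. Since $1+\la\notin\bQ$, this line meets $\bQ^3$ only at the origin, so $M(G_A)=2^\infty G_A=\{0\}$, whereas $G_f$ has rank $2$. No rearrangement of the generator expansions in Lemma \ref{l:repr'} can repair this: $G_A$ itself contains no nonzero infinitely $p$-divisible element. The $p$-adic divisibility structure of $G_A$ lives in the quotient $G_A/\bZ^n\cong\bigoplus_p\bZ(p^\infty)^{t_p}$, not in any subgroup of $G_A$.

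The paper instead argues archimedeanly. By Lemma \ref{l:dense}, $G_f$ is dense in $\bQ^{n-k(A)}$ in the usual topology, so the closure of $G_A$ in $\bQ^n$ is $\bQ^{n-k(A)}\oplus\bZ^{k(A)}$. Any isomorphism $G_A\to G_B$ is given by some $T\in\GL_n(\bQ)$ by \cite[Lemma 3.1]{s}; being $\bQ$-linear, $T$ carries the closure of $G_A$ to that of $G_B$ and preserves its (uniquely determined) divisible part, giving $T(\bQ^{n-k(A)})=\bQ^{n-k(B)}$. Hence $k(A)=k(B)$, and intersecting with $G_A$ yields $T(G_f)=G_{f'}$. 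The archimedean closure is the right characteristic structure here precisely because density sidesteps the rationality obstruction that makes your $p$-adic candidate trivial.
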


\begin{proof}
Clearly, the conditions are sufficient by \eqref{eq:sas1}. We now show that they are necessary.
Assume $G_A\cong G_B$. By \eqref{eq:sas1}, without loss of generality, we can assume that 
$$G_A=G_{f}\oplus\bZ^{k(A)},\quad G_B=G_{f'}\oplus\bZ^{k(B)}.$$ 
By Lemma 
\ref{l:dense} below, $G_{f}$ is dense in $\bQ^{n-k(A)}$. Therefore, the closure 
$\overline{G}_{A}$ of $G_{A}$ in $\bQ^{n}$ with its usual topology is 
$$
\overline{G}_A=\bQ^{n-k(A)}\oplus\bZ^{k(A)}
$$ 
and, analogously, for $B$
$$
\overline{G}_B=\bQ^{n-k(B)}\oplus\bZ^{k(B)}.
$$ 
An isomorphism between $G_A$ and $G_B$ is induced by a linear isomorphism 
$T\in\GL_n(\bQ)$ of
$\bQ^n$ \cite[Lemma 3.1]{s} such that $T(G_A)=G_B$. Thus, $T(\overline{G}_A)=\overline{G}_B$, hence $k(A)=k(B)$,
$T(\bQ^{n-k(A)})=\bQ^{n-k(B)}$, and therefore $T({G}_{f})={G}_{f'}$.
\end{proof}

\begin{rem}\label{rem:ones}
By Lemma \ref{l:split}, without loss of generality, for the rest of the section we can assume that
$r(0)\ne\pm 1$ for any irreducible component $r\in\bZ[t]$ of $h_A$, and the same holds for $h_B$.
\end{rem}

\subsection{Properties of irreducible isomorphisms}
We now explore necessary conditions for an isomorphism between $G_A$ and $G_B$ to be irreducible. 
For any 
$p\in\cP'(A)$ let $\tilde{h},h_{A,p}\in\bZ[t]$  be such that 
$h_A=\tilde{h}h_{A,p}$, $p$ does not divide $\tilde{h}(0)$, and $p$ divides $r(0)$ for any irreducible component $r\in\bZ[t]$ of $h_{A,p}$. Also, let $S_{A,p}$ denote the set of distinct roots of $h_{A,p}$ (not counting multiplicities). For a prime ideal $\fp$ of the ring of integers $\cO_K$ of $K$ above $p$, let
$$
X_{A,p}=\sum_{\sg\in G}\sg(X_{A,\fp})=\sum_{\sg\in G}X_{A,\sg(\fp)},
$$
where the second equality holds, since $A$ is defined over $\bQ$, $G=\Gal(K/\bQ)$, and $X_{A,\fp}$ is defined in Section \ref{s:eigen}.  Equivalently, 
$$
X_{A,p}=\sum_{\la\in S_{A,p}}K(A,\la),\quad S_{A,p}=\{\la\in\cO_K\,\vert\,h_{A,p}(\la)=0\},
$$
since $G$ acts transitively on the roots of an irreducible component $r\in\bZ[t]$ of $h_A$. Note that 
$$
\dim X_{A,p}=\deg h_{A,p},\quad \sg(X_{A,p})=X_{A,p}\text{ for any }\sg\in G.
$$
Moreover, for $p_1,\ldots,p_k\in\cP'(A)$ denote recursively
$$
X_{A,p_1\cdots p_k}=X_{A,p_1\cdots \,p_{k-1}}\cap X_{A,p_k}=
\sum_{\la\in S_{A,p_1}\cap\,\cdots\,\cap\, S_{A,p_k}}K(A,\la),
$$ 
where the second equality holds, since generalized eigenvectors corresponding to distinct eigenvalues are linearly independent. We write $h_A=h_1\cdots h_s$, where each 
$h_i=r_i^{u_i}$, $u_i\in\bN$, $r_i\in\bZ[t]$ is irreducible, and $h_i$, $h_j$ have no common roots in $\overline{\bQ}$ for $i\ne j$. In this notation,
$$
h_{A,p}=\prod_{p\vert h_i(0)}h_i,\quad h_{A,p_1\cdots \,p_k}=\prod_{p_1\cdots \,p_k\vert h_i(0)}h_i,
$$
where $p_1,\ldots,p_k$ are assumed to be distinct. Then, 
$\dim X_{A,p_1\cdots\,p_k}=\deg h_{A,p_1\cdots\,p_k}$. We now assume $B\in\M_n(\bZ)$ is non-singular and $T(G_A)=G_B$ for some $T\in\GL_n(\bQ)$. Then, 
by Theorem \ref{th:prelim0}, we have $\cP'=\cP'(A)=\cP'(B)$ and $T(X_{A,\fp})=X_{B,\fp}$. Since 
$T, A,B$ are all defined over $\bQ$, for any $\sg\in G$ we have 
$$ 
T(X_{A,\sg(\fp)})=T\sg(X_{A,\fp})=\sg(T(X_{A,\fp}))=\sg(X_{B,\fp})=X_{B,\sg(\fp)},
$$
and hence $T(X_{A,p})=X_{B,p}$. This implies the following lemma.
\begin{lem}\label{l:degrees}
Let $A,B\in\M_n(\bZ)$ be non-singular and let $T(G_A)=G_B$, $T\in\GL_n(\bQ)$. 
Then
$\cP'=\cP'(A)=\cP'(B)$ and for any $k\in\bN$ with distinct $p_1,\ldots,p_k\in\cP'$, 
$$
T(X_{A,p_1\cdots\,p_k})=X_{B,p_1\cdots\,p_k}.
$$ 
In particular, 
$$
\deg h_{A,p_1\cdots\,p_k}=\deg h_{B,p_1\cdots\,p_k}.
$$
\end{lem}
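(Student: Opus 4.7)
The plan is to observe that essentially all the work has already been organized in the paragraph preceding the lemma; the proof just needs to assemble three ingredients: Theorem \ref{th:prelim0}, Galois equivariance, and the commutation of a linear isomorphism with finite intersections.

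First, the equality $\cP'(A)=\cP'(B)$ is immediate from Theorem \ref{th:prelim0}, which also yields $T(X_{A,\fp})=X_{B,\fp}$ for every $p\in\cP'$ and every prime ideal $\fp$ of $\cO_K$ above $p$. I would then handle the case $k=1$ exactly as sketched: for any $\sg\in G=\Gal(K/\bQ)$, since $T$, $A$, $B$ have rational entries, multiplication by $T$ commutes with the Galois action on $K^n$, so
$$T(X_{A,\sg(\fp)})=T\sg(X_{A,\fp})=\sg(T(X_{A,\fp}))=\sg(X_{B,\fp})=X_{B,\sg(\fp)}.$$
Summing over $\sg\in G$ and using the definition $X_{A,p}=\sum_{\sg\in G}X_{A,\sg(\fp)}$ gives $T(X_{A,p})=X_{B,p}$.

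Next I would pass from single primes to $k$-fold intersections by induction on $k$. The key point is the identity noted in the excerpt,
$$X_{A,p_1\cdots p_k}=X_{A,p_1\cdots p_{k-1}}\cap X_{A,p_k}=\sum_{\la\in S_{A,p_1}\cap\cdots\cap S_{A,p_k}}K(A,\la),$$
which expresses $X_{A,p_1\cdots p_k}$ as an honest intersection of subspaces of $K^n$. Since $T\colon K^n\rar K^n$ (extended by scalars from $\bQ$) is a linear isomorphism, it commutes with finite intersections of subspaces. Combining the inductive hypothesis $T(X_{A,p_1\cdots p_{k-1}})=X_{B,p_1\cdots p_{k-1}}$ with the $k=1$ step applied to $p_k$ gives
$$T(X_{A,p_1\cdots p_k})=T(X_{A,p_1\cdots p_{k-1}})\cap T(X_{A,p_k})=X_{B,p_1\cdots p_{k-1}}\cap X_{B,p_k}=X_{B,p_1\cdots p_k}.$$

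Finally, the dimension statement is essentially automatic: the excerpt already records $\dim_K X_{A,p_1\cdots p_k}=\deg h_{A,p_1\cdots p_k}$ and similarly for $B$, and a linear isomorphism preserves $K$-dimension, so $\deg h_{A,p_1\cdots p_k}=\deg h_{B,p_1\cdots p_k}$. I do not foresee a genuine obstacle here; the only points requiring care are the verification that $T$ really does commute with the Galois action (which rests on $T$, $A$, $B$ being defined over $\bQ$, together with the fact that $X_{A,\fp}$ is Galois-compatible with the action on prime ideals because $A$ is rational) and the justification that the intersection identity for $X_{A,p_1\cdots p_k}$ follows from linear independence of generalized eigenvectors attached to distinct eigenvalues.
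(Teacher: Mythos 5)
Your proposal is correct and follows the same route the paper takes: the preceding paragraph in the paper already derives $\cP'(A)=\cP'(B)$, $T(X_{A,\fp})=X_{B,\fp}$, and (via Galois equivariance) $T(X_{A,p})=X_{B,p}$, and then simply declares that this ``implies the following lemma.'' You merely make explicit the induction on $k$ using the standard fact that a linear isomorphism commutes with finite intersections of subspaces, together with the recursive definition $X_{A,p_1\cdots p_k}=X_{A,p_1\cdots p_{k-1}}\cap X_{A,p_k}$ and the dimension formula $\dim_K X_{A,p_1\cdots p_k}=\deg h_{A,p_1\cdots p_k}$; this is exactly the filling-in the paper leaves to the reader.
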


\begin{example}
Let $A,B\in\M_5(\bZ)$ be non-singular with characteristic polynomials 
$$
h_A=(t^2+t+2)(t^3+t+6),\quad h_B=(t^2+4)(t^3+t+3).
$$
Then $\cP'=\cP'(A)=\cP'(B)=\{2,3\}$, $t_2(A)=t_2(B)=2$, 
$t_3(A)=t_3(B)=1$. However,  
$$
h_{A,2}=h_A,\quad h_{B,2}=t^2+4,
$$
so that $\deg h_{A,2}\ne\deg h_{B,2}$ and hence $G_A\not\cong G_B$ by Lemma \ref{l:degrees}.
\end{example}

\begin{cor}
If an isomorphism $T:G_A\rar G_B$ is irreducible, then for all the irreducible components $f_1,\ldots,f_k\in\bZ[t]$ $($resp., $g_1,\ldots,g_s\in\bZ[t])$ of the characteristic polynomial of $A$ 
$($resp., of $B)$, all $f_1(0)$, $\ldots,f_k(0)$ $($resp., $g_1(0),\ldots,g_s(0))$ have the same prime divisors $($in $\bZ)$.
\end{cor}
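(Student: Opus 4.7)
The plan is to argue the contrapositive. Assume that the prime-divisor sets of the constant terms of the irreducible components of $h_A$ are not all equal, and show that the isomorphism $T: G_A \to G_B$ is reducible in the sense of Definition \ref{def:1}; the claim for $h_B$ will follow by running the same argument with $A$ and $B$ interchanged (and $T$ replaced by $T^{-1}$). Pick two components $f_1, f_2$ of $h_A$ and a prime $p$ with $p \mid f_1(0)$ and $p \nmid f_2(0)$. I first want to verify $p \in \cP'(A)$: clearly $p \in \cP(A)$ since $p \mid h_A(0)$; and $h_A \equiv t^n \pmod p$ would, by unique factorization in $\bF_p[t]$ applied to $h_A = \prod f_i^{u_i}$, force $p \mid f_i(0)$ for every $i$, contradicting $p \nmid f_2(0)$. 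By Theorem \ref{th:prelim0} we then also have $p \in \cP'(B)$.

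Next I would extract the rational pieces corresponding to $p$. Working in the Galois extension $K$ of $\bQ$ containing all eigenvalues of $A$ and $B$, set $V_{A,p} := X_{A,p} \cap \bQ^n$; since $X_{A,p}$ is $\Gal(K/\bQ)$-stable and $A$-invariant, Galois descent yields a $\bQ$-rational, $A$-invariant subspace of dimension $\deg h_{A,p}$. The factor $h_{A,p}$ is nonconstant (divisible by $f_1$) and a proper divisor of $h_A$ (not divisible by $f_2$), so setting $f = h_{A,p}$, $g = h_A/h_{A,p}$ gives nonconstant $f, g \in \bZ[t]$ with $h_A = fg$. Using that $V_{A,p} \cap \bZ^n$ is a saturated sublattice of $\bZ^n$, I extend a $\bZ$-basis of it to a $\bZ$-basis of $\bZ^n$, producing $S \in \GL_n(\bZ)$ such that $SAS^{-1}$ is block upper-triangular with top block $A'$ of characteristic polynomial $f$; this is exactly what Theorem \ref{th:trig} delivers. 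Lemma \ref{l:degrees} then gives $\deg h_{B,p} = \deg h_{A,p}$ and $T(V_{A,p}) = V_{B,p}$, so the same procedure applied to $V_{B,p}$ produces $L \in \GL_n(\bZ)$ putting $LBL^{-1}$ in the desired block form with blocks of characteristic polynomials $f' = h_{B,p}$ and $g' = h_B/h_{B,p}$, with $\deg f = \deg f'$.

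The final step is to observe that, because $S^{-1}$ and $L^{-1}$ identify their respective coordinate flags with $V_{A,p}$ and $V_{B,p}$, the composition $LTS^{-1}$ sends the coordinate subspace $\bQ^d \oplus \{0\}$ (with $d = \deg f$) to itself, hence is block upper-triangular. Its invertible top-left $d \times d$ block is a $\bQ$-linear isomorphism carrying $G_{A'} = G_f \hookrightarrow G_{SAS^{-1}}$ onto $G_{B'} = G_{f'} \hookrightarrow G_{LBL^{-1}}$, giving $LTS^{-1}(G_f) = G_{f'}$. All conditions of Definition \ref{def:1} are satisfied, so $T$ is reducible, contradicting the hypothesis. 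The main obstacle I anticipate is the integrality step: ensuring $S, L \in \GL_n(\bZ)$ rather than only $\GL_n(\bQ)$, which rests on the saturation of $V_{A,p} \cap \bZ^n$ inside $\bZ^n$ (and likewise for $B$). Everything else is a direct application of Lemma \ref{l:degrees} and the generalized-eigenvector description from Section \ref{s:eigen}.
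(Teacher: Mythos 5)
Your proof is correct and follows essentially the same route as the paper's: both hinge on the factorization $h_A = \tilde h\, h_{A,p}$, Lemma~\ref{l:degrees} giving $T(X_{A,p})=X_{B,p}$, and the block-triangular forms supplied by Theorem~\ref{th:trig} (which, via the paragraph preceding Definition~\ref{def:1}, resolves the integrality concern you flag). The only cosmetic difference is that you argue by contrapositive — fixing a witness prime $p$ with $p\mid f_1(0)$, $p\nmid f_2(0)$ and showing $T$ must be reducible — whereas the paper argues directly that irreducibility of $T$ forces $\tilde h$ to be constant for every $p\in\cP'$; the content is the same.
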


\begin{proof}
Assume $T(G_A)=G_B$, $T\in\GL_n(\bQ)$. By Theorem \ref{th:prelim0},  
$\cP'=\cP'(A)=\cP'(B)$. 
In the above notation, for $p\in\cP'$ and the characteristic polynomial 
$h_A$ (resp., $h_B$) of $A$ (resp., $B$) let $\tilde{h},\hat{h},h_{A,p}, h_{B,p}\in\bZ[t]$ be such that 
$h_A=\tilde{h}h_{A,p}$, $h_B=\hat{h}h_{B,p}$, $p$ does not divide $\tilde{h}(0)\hat{h}(0)$, and $p$ divides $r(0)$ for any irreducible component $r\in\bZ[t]$ of $h_{A,p}h_{B,p}$. It follows from Lemma \ref{l:degrees}, \eqref{eq:jord}, and the paragraph preceding Definition \ref{def:1} that $LTS^{-1}(G_{h_{A,p}})=G_{h_{B,p}}$ for some $L,S\in\GL_n(\bZ)$. Since  
$T$ is irreducible, $\tilde h$ is constant. Since $p\in\cP'$ is arbitrary,
we conclude that for all the irreducible components $f_1,\ldots,f_k\in\bZ[t]$ of $h_A$, $f_1(0),\ldots,f_k(0)$ have the same prime divisors (in $\bZ$). By symmetry, the same holds for $B$.
\end{proof}

\subsection{Galois action}\label{ss:galois}
We explore the action of the Galois group $\Gal(K/\bQ)$ on eigenvalues of non-singular $A,B\in\M_n(\bZ)$ when $G_A\cong G_B$. Let $A,B$ have characteristic polynomials 
$h_A=h_1^{\al_1}\cdots h_k^{\al_k}$, 
$h_B=r_1^{\be_1}\cdots r_s^{\be_s}$, respectively, where $\al_1,\ldots,\al_k,\be_1,\ldots,\be_s\in\bN$, and $h_1,\ldots,h_k\in\bZ[t]$ (resp., $r_1,\ldots,r_s\in\bZ[t]$) are distinct and  irreducible. Let $K\subset\overline{\bQ}$ be a finite Galois extension of $\bQ$ that contains all the eigenvalues of $A$ and $B$. Let $\Sg\subset K$ (resp., $\Sg'\subset K$) denote the set of all distinct eigenvalues of $A$ (resp., $B$) with cardinality denoted by $|\Sg|$, and let 
$\Sg=\Sg_1\sqcup\cdots\sqcup\Sg_k$ (resp., $\Sg'=\Sg'_1\sqcup\cdots\sqcup\Sg'_s$), where each $\Sg_i$ (resp., $\Sg'_j$) is the set of all (distinct) roots of $h_i$ (resp., $r_j$), $i\in\{1,\ldots,k\}$, $j\in\{1,\ldots,s\}$. Thus,
$$
n=\sum_{i=1}^k\al_i|\Sg_i|=\sum_{j=1}^s\be_j|\Sg'_j|,\quad n_i(A)=|\Sg_i|,\quad n_j(B)=|\Sg'_j|,
$$
where $n_i(A)$ (resp., $n_j(B)$) is the number of distinct roots of $h_i$ (resp., $r_j$).

\sbr

Let $T:G_A\rar G_B$ be an isomorphism.
By  Theorem \ref{th:prelim0}, 
$\caR=\caR(A)=\caR(B)$, $\cP=\cP(A)=\cP(B)$,
$\cP'=\cP'(A)=\cP'(B)$, and $t_p=t_p(A)=t_p(B)$ for any prime $p\in\cP$. By assumption, $\cP'\ne\emptyset$ and for any $p\in\cP'$ we have $1\leq t_p\leq n-1$. For a subset 
$M$ of $\Sg$ (resp., $M'$ of $\Sg'$) we denote
\begin{eqnarray*}
U_M&=&\bigoplus_{\la\in M}K(A,\la),\quad M=M_1\sqcup\cdots\sqcup M_k,\\
V_{M'}&=&\bigoplus_{\mu\in M'}K(B,\mu),\quad M'=M'_1\sqcup\cdots\sqcup M'_s,
\end{eqnarray*}
where each $M_i$ (resp., $M'_j$) is a subset of $\Sg_i$ (resp., $\Sg'_j$), and 
$K(A,\la)$ (resp., $K(B,\mu)$) denotes the generalized $\la$-eigenspace of $A$ (resp., generalized $\mu$-eigenspace of $B$). Denote
$$
||M||=\sum_{i=1}^k\al_i|M_i|,\quad ||M'||=\sum_{j=1}^s\be_j|M'_j|.
$$
By Theorem \ref{th:prelim0}, we have $T(X_{A,\fp})=X_{B,\fp}$ for a prime ideal $\fp$ of $\cO_K$ above $p$, {\it i.e.,} in the above notation there exist $M\subset\Sg$, $M'\subset\Sg'$ such that 
\bbe\label{eq:tum1}
T(U_M)=V_{M'},\quad t_p=||M||=||M'||,\quad
t_{p,i}(A)=|M_i|,\quad t_{p,j}(B)=|M'_j|.
\ee
Here, $t_{p,i}(A)$ (resp., $t_{p,j}(B)$) is the number of distinct roots of $h_i$ (resp., $r_j$) divisible by $\fp$. Equivalently, 
$t_{p,i}(A)$ (resp., $t_{p,j}(B)$) is the multiplicity of zero in the reduction of $h_i$ (resp., $r_j$) modulo $p$. 
\begin{lem}\label{l:divide}
Assume $T:G_A\rar G_B$ is an irreducible isomorphism. 
Let $S\subset\Sg$ be a non-empty
subset of $\Sg$ of the smallest cardinality with the property that there exists $S'\subset\Sg'$ with
$$
T(U_S)=V_{S'},\quad S=S_1\sqcup\cdots\sqcup S_k,\quad S'=S'_1\sqcup\cdots\sqcup S'_s,
$$
where each $S_i$ $($resp., $S'_j)$ is a subset of $\Sg_i$ $($resp., $\Sg'_j)$. Then, $S_i\ne\emptyset$, $S'_j\ne\emptyset$ for any 
$i\in\{1,\ldots,k\}$, $j\in\{1,\ldots,s\}$.
Moreover, $||S||=||S'||$, for any $i$,  $p\in\cP'$,
\begin{enumerate}[$(a)$]
\item $||S||\text{ divides }n,\, t_p$, 
\item $|S_i|\text{ divides }n_i(A),\, t_{p,i}(A)$,
\item $\frac{n_i(A)}{|S_i|}=\frac{n}{||S||}$,
\item $\frac{t_{p,i}(A)}{|S_i|}=\frac{t_p}{||S||}$,
\end{enumerate}
and, similarly, for $B$. 
\end{lem}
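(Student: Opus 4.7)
The strategy is to combine the minimality of $|S|$ with the Galois action of $G=\Gal(K/\bQ)$, using the irreducibility of $T$ only to ensure every $S_i$ and $S'_j$ is non-empty. Since $T$, $A$, $B$ are all defined over $\bQ$, the map $T$ commutes with the $G$-action on $K^n$; for any $\sg\in G$ this gives $T(U_{\sg(S)})=V_{\sg(S')}$, and intersecting yields $T(U_{S\cap\sg(S)})=V_{S'\cap\sg(S')}$ (using disjointness of generalized eigenspaces for distinct eigenvalues). The minimality of $|S|$ then forces the dichotomy: for every $\sg\in G$, either $\sg(S)=S$ or $\sg(S)\cap S=\emptyset$; moreover $\sg(S)=S$ iff $\sg(S')=S'$, so the dichotomy transfers to $S'$.

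The first step is to show non-emptiness of each $S_i$ and $S'_j$ by contraposition. Suppose $S_i=\emptyset$ for some $i$; set $\tilde S=\bigcup_{\sg\in G}\sg(S)$ and $\tilde S'=\bigcup_{\sg\in G}\sg(S')$. Since $G$ acts transitively on each $\Sg_{i'}$ (resp.\ $\Sg'_{j'}$) and preserves these partitions, $\tilde S=\bigsqcup_{i':S_{i'}\ne\emptyset}\Sg_{i'}$ is a proper $G$-stable subset of $\Sg$, and similarly for $\tilde S'$. The $K$-subspaces $U_{\tilde S}$ and $V_{\tilde S'}$ descend to proper non-zero $A$- and $B$-invariant $\bQ$-subspaces $W,W'\subset\bQ^n$ of equal dimension with $T(W)=W'$, whose characteristic polynomials $f\mid h_A$ and $f'\mid h_B$ are non-constant. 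I would apply Theorem \ref{th:trig} to the saturated sublattices $W\cap\bZ^n$, $W'\cap\bZ^n$ to produce $S_0,L_0\in\GL_n(\bZ)$ bringing $A$, $B$ into block upper-triangular form aligned with $W$, $W'$; since $T(G_A\cap W)=G_B\cap W'$ while $S_0(G_A\cap W)$ and $L_0(G_B\cap W')$ are identified with $G_f\oplus 0$ and $G_{f'}\oplus 0$ respectively, this exhibits $T$ as reducible, contradicting the hypothesis. The case $S'_j=\emptyset$ is handled identically.

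With $S_i,S'_j\ne\emptyset$ established, $||S||=||S'||$ is immediate from $\dim_K U_S=||S||$, $\dim_K V_{S'}=||S'||$, and $T$ restricting to a $K$-linear isomorphism between them. For (a)--(c), the dichotomy together with transitivity of $G$ on each $\Sg_i$ implies that two translates $\sg(S),\tau(S)$ are equal iff $\sg(S_i)=\tau(S_i)$ for some (equivalently, every) $i$; hence the number of distinct $\sg(S)$'s equals $n_i(A)/|S_i|$ for every $i$, and this common value $N$ yields $n_i(A)=N|S_i|$ and, summing, $n=\sum_i\al_i n_i(A)=N||S||$. For (d), let $\fp\mid p$ with $p\in\cP'$ and $M_\fp=\Sg\cap\fp$; Theorem \ref{th:prelim0} gives $T(U_{M_\fp})=V_{M'_\fp}$, so applying the minimality argument to $\sg(S)\cap M_\fp$ shows each translate is either contained in $M_\fp$ or disjoint from it. Letting $N_p$ count those contained in $M_\fp$, one obtains $t_{p,i}(A)=N_p|S_i|$ for each $i$ and $t_p=N_p||S||$. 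The analogues for $B$ follow by the same analysis applied to the transferred dichotomy on $S'$, using the bijection $\sg(S)\leftrightarrow\sg(S')$ to match the constants $N$ and $N_p$.

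The main obstacle is the first step: converting the $G$-invariant $K$-subspace $U_{\tilde S}$ into an integral block decomposition witnessing Definition \ref{def:1}. This requires Theorem \ref{th:trig} applied to the saturated sublattices $W\cap\bZ^n$, $W'\cap\bZ^n$, together with the verification that $S_0(G_A\cap W)$ coincides with $G_f\oplus 0\subset G_{S_0AS_0^{-1}}$, which follows from the block upper-triangular form of $S_0AS_0^{-1}$ and the fact that its inverse preserves the corresponding coordinate subspace.
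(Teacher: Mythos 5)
Your proof is correct and follows essentially the same route as the paper's: establish the equal-or-disjoint dichotomy for Galois translates of $S$ via minimality, then count translates partitioning $\Sg$ (for (a)--(c)) and $M_\fp$ (for (d)), with non-emptiness of the $S_i$ handled first by contradiction with irreducibility. The only real difference is expositional: where the paper reduces the non-emptiness step to the equivalence recorded near \eqref{eq:jord}, you unwind that equivalence explicitly by Galois-descending $U_{\tilde S}$ to a $\bQ$-subspace $W$, saturating $W\cap\bZ^n$, and applying the proof mechanism of Theorem~\ref{th:trig} to produce the witnesses $S_0,L_0$ of Definition~\ref{def:1} — a useful filling-in of a detail the paper compresses.
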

\begin{proof}
By \eqref{eq:tum1}, $S$ exists and $1\leq ||S||<n$. Assume $T$ is irreducible and there exists $S_i=\emptyset$, 
{\it e.g.}, $S_1=\cdots=S_l=\emptyset$, $S_{l+1},\ldots,S_k$ are non-empty, $l\in\bN$, $1\leq l\leq k-1$, 
$f=h_{l+1}^{\al_{l+1}}\cdots h_k^{\al_k}$, $J=\{j\,\vert\, S'_j\ne\emptyset \}$, $J\ne\emptyset$, and $f'=\prod_{j\in J}r_j^{\be_j}$. From the definition of $S,S'$, we have
\bbe\label{eq:summ}
T\left(\bigoplus_{\la\in S} K(A,\la)\right)=\bigoplus_{\mu\in S'} K(B,\mu).
\ee 
By applying any $\sg\in\Gal(K/\bQ)$ to \eqref{eq:summ} and using the transitivity of the Galois action on roots of irreducible polynomials with rational coefficients, we see that
$$
T\left(\bigoplus_{\la\in\{\text{roots of }f\}} K(A,\la)\right)=\bigoplus_{\mu\in\{\text{roots of }f'\}} K(B,\mu).
$$
By the dimension count, this implies $\deg f'=\deg f<n$ and \eqref{eq:jord} holds. This contradicts the assumption that $T$ is irreducible.
%
Thus, all $S_i\ne\emptyset$ and, analogously, all $S'_j\ne\emptyset$.

\sbr

The Galois group $G=\Gal(K/\bQ)$ acts on 
$\Sg$ by acting on each $\Sg_i$, {\it i.e.}, $\sg(\Sg_i)=\Sg_i$ for any $i\in\{1,\ldots,k\}$, 
$\sg\in G$. 
Note that for any $P,R\subseteq\Sg$, $P',R'\subseteq\Sg'$ and $\sg\in G$ we have
\begin{alignat}{2}
U_P\cap U_R&= U_{P\,\cap\, R}, & \quad  V_{P'}\cap V_{R'} &=V_{P'\,\cap\, R'}, \label{eq:syst11} \\
\sg(U_P)&= U_{\sg(P)}, & \quad  \sg(V_{P'})& =V_{\sg(P')}.\label{eq:syst21} 
\end{alignat}
Let $N\subseteq\Sg$, $N'\subseteq\Sg'$ satisfy 
\bbe\label{eq:tun1}
T(U_N)=V_{N'}.
\ee 
Let $\sg\in G$ be arbitrary. Applying $\sg$ to \eqref{eq:tun1} and using properties
\eqref{eq:syst11}, \eqref{eq:syst21}, we have $T(U_{\sg(N)})=V_{\sg(N')}$, since
$T\in\GL_n(\bQ)$.
Hence,
$T(U_{S\,\cap\,\sg(N)})=V_{S'\,\cap\,\sg(N')}$.
Since $S$ is the smallest with this property, either $S\,\cap\,\sg(N)=S$ or
$S\,\cap\,\sg(N)=\emptyset$. Equivalently, $\sg(S)\,\cap\,N=\sg(S)$ or
$\sg(S)\,\cap\,N=\emptyset$. In particular, taking $N=\tau(S)$
for an arbitrary $\tau\in G$, either $\sg(S)=\tau(S)$ or
$\sg(S)\,\cap\,\tau(S)=\emptyset$. Let 
$$
S=S_1\sqcup\cdots\sqcup S_k,\quad N=N_1\sqcup\cdots\sqcup N_k,\quad \forall S_i,N_i\subseteq\Sg_i,
$$
$i\in\{1,\ldots,k\}$. 
Then for any $\sg\in G$, we have either $\sg(S_i)\,\cap\,N_i=\sg(S_i)$ for all $i$ 
or $\sg(S_i)\,\cap\,N_i=\emptyset$ for all $i$. Analogously, for any $\sg,\tau\in G$, we have either 
$\sg(S_i)=\tau(S_i)$ for all $i$ 
or $\sg(S_i)\,\cap\,\tau(S_i)=\emptyset$ for all $i$. Moreover, since each
$h_i$  is irreducible, $G$ acts transitively on $\Sg_i$. 
This implies that each $N_i$ is a disjoint union of orbits $\sg(S_i)$ of $S_i$, $\sg\in G$ and, furthermore, there exists a subset $H\subseteq G$ depending on $N$ such that 
\bbe\label{eq:yay}
N_i=\bigsqcup_{\sg\in H} \sg(S_i),\quad |N_i|=|H|\cdot |S_i|\,\,\text{ for all }\,\, i.
\ee
Clearly, \eqref{eq:tun1} holds for $N=\Sg$ and also for $N=M$ by \eqref{eq:tum1}. Thus, by \eqref{eq:yay}, there exists $H_1,H_2\subseteq G$ such that
\begin{eqnarray*}
n_i(A)=|H_1||S_i|,\quad n&=&\sum_{i=1}^k\al_i|\Sg_i|=|H_1|\sum_{i=1}^k\al_i |S_i|=|H_1|\cdot ||S||,\\
t_{p,i}(A)=|H_2||S_i|,\quad t_p&=&\sum_{i=1}^k\al_i| M_i|=|H_2|\sum_{i=1}^k\al_i |S_i|=|H_2|\cdot ||S||.
\end{eqnarray*}
Hence, (a), (b), (c), and (d) hold. 
By symmetry, we have analogous formulas for $B$. 
\end{proof}
We now use Lemma \ref{l:divide} in a special case when the greatest common divisor $(n,t_p)$ of $n$ and $t_p$ is one,
{\it e.g.}, when
$t_p=1$, or $t_p=n-1$, or $n$ is prime. The conclusion is that an irreducible isomorphism $T$ between $G_A$, $G_B$ implies that both
 characteristic polynomials $h_A$, $h_B$ are irreducible and $T$ takes any eigenvector of $A$ to an eigenvector of $B$. 

\begin{prop}\label{pr:main31}
Let $A,B\in\M_n(\bZ)$ be non-singular. Assume there exists a prime $p\in\cP'(A)$ with 
$(n,t_p(A))=1$. 
If $T\in\GL_n(\bQ)$ is an irreducible isomorphism from $G_A$ to $G_B$, then both $h_A, h_B$ are irreducible in $\bZ[t]$,
and there exist eigenvalues $\la,\mu\in\overline{\bQ}$ of $A,B$, respectively, such that $K=\bQ(\la)=\bQ(\mu)$. Moreover, 
$\la$ and $\mu$ have the same prime ideal divisors in $\cO_K$, and for an eigenvector ${\bf u}\in({\overline{\bQ}})^n$ of $A$, $T({\bf u})$ is an eigenvector of $B$.
%
\end{prop}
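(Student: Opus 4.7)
The plan is to squeeze Lemma \ref{l:divide} with the hypothesis $(n,t_p(A))=1$ until the characteristic polynomials collapse to irreducibles, and then read off everything else from the minimal sets $S,S'$. First, let $S\subseteq\Sg$ be the minimal non-empty subset provided by Lemma \ref{l:divide}, with $T(U_S)=V_{S'}$. Part (a) of that lemma yields $||S||\mid(n,t_p)=1$, hence $||S||=1$. Since each $S_i\ne\emptyset$ (so $|S_i|\geq 1$) and each $\al_i\geq 1$, the identity $||S||=\sum_i\al_i|S_i|=1$ forces $k=1$, $\al_1=1$, $|S_1|=1$. Thus $h_A=h_1$ is irreducible in $\bZ[t]$; the symmetric argument applied to $B$ gives $h_B$ irreducible.

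Second, write $S=\{\la\}$, $S'=\{\mu\}$. Because $\al_1=1$, $\la$ is a simple eigenvalue of $A$, so $K(A,\la)$ is one-dimensional, spanned by an eigenvector ${\bf u}$; similarly $K(B,\mu)$ is spanned by an eigenvector ${\bf v}$. The relation $T(K(A,\la))=K(B,\mu)$ gives $T{\bf u}=c{\bf v}$ for some nonzero $c$. Any eigenvector of $A$ for an eigenvalue $\sg(\la)$ is a scalar multiple of $\sg({\bf u})$, and since $T$ has rational entries, $T(\sg({\bf u}))=\sg(T{\bf u})=\sg(c)\sg({\bf v})$ is an eigenvector of $B$ for $\sg(\mu)$; this establishes the eigenvector claim. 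For the field equality, pick ${\bf u}$ with entries in $\bQ(\la)$ by solving $(A-\la I){\bf u}={\bf 0}$ over $\bQ(\la)$; then $T{\bf u}$ has entries in $\bQ(\la)$, and $\mu=(BT{\bf u})_i/(T{\bf u})_i$ (for any nonzero coordinate) lies in $\bQ(\la)$. Symmetrically, choosing ${\bf v}$ over $\bQ(\mu)$ and applying $T^{-1}$ gives $\la\in\bQ(\mu)$. Hence $\bQ(\la)=\bQ(\mu)$, which we call $K$.

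The final --- and what I expect to be the main --- obstacle is the claim about prime ideal divisors in $\cO_K$. For this I would pass to a Galois closure $L/\bQ$ containing $K$ and all eigenvalues of $A,B$, with $G=\Gal(L/\bQ)$. For each $p'\in\cP'=\cP'(A)=\cP'(B)$ and each prime $\fP$ of $\cO_L$ above $p'$, Theorem \ref{th:prelim0} yields $T(X_{A,\fP})=X_{B,\fP}$, equivalently $T(U_{N_A(\fP)})=V_{N_B(\fP)}$, where $N_A(\fP)$ (resp.\ $N_B(\fP)$) is the set of eigenvalues of $A$ (resp.\ $B$) lying in $\fP$. Running the Galois-orbit analysis from the proof of Lemma \ref{l:divide} on $(N_A(\fP),N_B(\fP))$ yields a subset $H_\fP\subseteq G$ with $N_A(\fP)=\{\sg(\la):\sg\in H_\fP\}$; the same $H_\fP$ also indexes $N_B(\fP)=\{\sg(\mu):\sg\in H_\fP\}$, because $\sg(S)\subseteq N_A(\fP)$ implies, by applying $T$, that $\sg(S')\subseteq N_B(\fP)$, and the converse follows via $T^{-1}$. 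Thus for every $\sg\in G$, $\sg(\la)\in\fP\iff\sg\in H_\fP\iff\sg(\mu)\in\fP$, i.e.\ $\la\in\sg^{-1}(\fP)\iff\mu\in\sg^{-1}(\fP)$; as $\sg$ ranges over $G$, $\sg^{-1}(\fP)$ covers all primes of $\cO_L$ above $p'$. For $p'\in\cP\setminus\cP'$, the congruence $h_A\equiv h_B\equiv x^n\pmod{p'}$ forces both $\la$ and $\mu$ into every prime above $p'$; for rational primes outside $\cP=\cP(A)=\cP(B)$, the norms $N_{L/\bQ}(\la),N_{L/\bQ}(\mu)$ divide powers of $\det A,\det B$, so neither $\la$ nor $\mu$ lies in any prime of $\cO_L$ above. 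Intersecting with $K$ via $\fq=\fP\cap K$ descends the equivalence to $\cO_K$. The most delicate point is verifying that a common $H_\fP$ indexes both sides of the correspondence, which relies on transporting orbit inclusions back and forth across $T$.
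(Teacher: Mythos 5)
Your first two steps coincide with the paper's: Lemma \ref{l:divide}(a) with $(n,t_p)=1$ forces $\lVert S\rVert=1$, hence $k=\al_1=|S_1|=1$ and irreducibility of $h_A$ (and of $h_B$ by symmetry), and then $T(K(A,\la))=K(B,\mu)$ gives the eigenvector statement; for the field equality the paper simply notes $\mu\in\bQ(\la)$ and concludes by comparing degrees (both fields have degree $n$ over $\bQ$), whereas you run the symmetric $T^{-1}$ argument — a cosmetic difference.

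For the claim about common prime ideal divisors in $\cO_K$ you take a genuinely different route from the paper. The paper imports the argument of \cite[Proposition 4.1]{s}: from $T(G_A)=G_B$ it extracts, for each $m$, an identity $B^{k_m}T=P_mA^m$ with $P_m\in\M_n(\bZ)$, multiplies by $\mathbf{u}$ to get $\mu^{k_m}T\mathbf{u}=\la^mP_m\mathbf{u}$, and then argues by $\fp$-adic valuations that if $\fp\mid\la$ then, since the right side is divisible by arbitrarily high powers of $\fp$ as $m\to\infty$ while $T\mathbf{u}$ is fixed, one must have $\fp\mid\mu$; the reverse inclusion comes by swapping $A$ and $B$. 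Your argument instead stays within the local decomposition machinery of the present paper: it invokes $T(X_{A,\fP})=X_{B,\fP}$ from Theorem \ref{th:prelim0} for primes $\fP$ over $p\in\cP'$, combines it with the Galois twists $T(K(A,\sg\la))=K(B,\sg\mu)$, and uses linear independence of distinct eigenspaces of $B$ to force $\sg\la\in\fP\iff\sg\mu\in\fP$, with the cases $\cP\setminus\cP'$ and primes outside $\cP$ handled by the congruence $h\equiv x^n$ and by norms, respectively. Both are valid. The paper's version is shorter and self-contained modulo the cited proposition; yours makes the local mechanism explicit and is arguably more in the spirit of Theorem \ref{th:prelim0}. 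One point you should make explicit: the claim that a single $H_\fP$ indexes both $N_A(\fP)$ and $N_B(\fP)$ follows because $T$ sends $U_{\sg(S)}$ to $V_{\sg(S')}$ and distinct eigenspaces of $B$ are independent, so $V_{N_B(\fP)}=\bigoplus_{\sg\in H_\fP}V_{\sg(S')}$ forces $N_B(\fP)=\bigsqcup_{\sg\in H_\fP}\sg(S')$ — this is the step your sentence ``the same $H_\fP$ also indexes $N_B(\fP)$'' is gesturing at, and it deserves a line of justification.
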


\begin{proof}
By Lemma \ref{l:divide}, $||S||=1$ and each $S_i$ is non-empty. Hence $k=\al_1=1$, $|S_1|=1$ and $h_A$ is irreducible. By symmetry, $h_B$ is irreducible and $T$ takes an eigenvector of $A$ to an eigenvector of $B$. Assume 
$A{\bf u}=\la{\bf u}$, $B{\bf v}=\mu{\bf v}$ for some $\la,\mu\in\overline{\bQ}$. Without loss of generality, we can assume 
${\bf u}\in\bQ(\la)^n$. From $T{\bf u}={\bf v}$ we have $BT{\bf u}=B{\bf v}=\mu T{\bf u}$. Since $B,T$ are defined over $\bQ$, this implies $\mu\in\bQ(\la)$ and hence $\bQ(\mu)=\bQ(\la)$. 

\sbr

We now show the existence of eigenvalues of $A$, $B$ sharing the same 
prime ideal divisors in the ring of integers $\cO_K$ of $K$. The argument is the same as in the proof of \cite[Proposition 4.1]{s}. We repeat it for the sake of completeness. By the previous paragraph, there exist 
$\mu\in \cO_K$ and an eigenvector ${\bf u}\in \cO_K ^n$ corresponding to an eigenvalue 
 $\la\in \cO_K$ of $A$ such that $T({\bf u})$ is an eigenvector of $B$ corresponding to $\mu$. Since $T(G_A)=G_B$, by definition \eqref{eq:opr} of groups $G_A$, $G_B$, for any $m\in\bN$ we have
\bbe\label{eq:llll}
B^{k_m}T=P_mA^{m},\quad k_m\in\bN\cup\{0\},\,\, P_m\in \operatorname{M}_n(\bZ).
\ee
Let $T=\frac{1}{l}T'$ for some $l\in\bZ-\{0\}$ and non-singular $T'\in \operatorname{M}_n(\bZ)$. Let $\fp$ be a prime ideal of $\cO_K$ 
that divides $\la$. By above, 
$
B(T{\bf u})=\mu (T{\bf u}).
$
Hence, multiplying \eqref{eq:llll} by ${\bf u}$, we get
\bbe\label{eq:divi}
\mu^{k_m}T{\bf u}=B^{k_m}T{\bf u}=P_mA^{m}{\bf u}=P_m\la^m{\bf u},\quad\forall m\in\bN.
\ee
Here $T{\bf u}\ne{\bf 0}$, $T{\bf u}$ does not depend on $m$, and $\fp$ divides $\la$. This implies that $\fp$ divides $\mu$ ({\it e.g.}, this follows from the existence and uniqueness of decomposition of non-zero ideals into prime ideals in the Dedekind domain $\cO_K$). Analogously, it follows from \eqref{eq:divi} that all prime (ideal) divisors of $\la$ also divide $\mu$ (in $\cO_K$). Repeating the same argument with $A$ replaced by $B$ and $\la$ replaced by $\mu$, we see that all prime divisors of $\mu$ also divide $\la$. Thus, $\la$ and $\mu$ have the same prime divisors.
\end{proof}

\begin{example}
We demonstrate how Lemma \ref{l:divide} 
can be used to describe irreducible isomorphisms when $2\leq n\leq 4$. If $n=2,3$, then any irreducible isomorphism between $G_A,G_B$ 
implies $h_A,h_B$ are irreducible by Proposition \ref{pr:main31}. Let $n=4$ and assume there is an irreducible isomorphism between  $G_A,G_B$. Using properties (a)--(d) in Lemma \ref{l:divide} and Proposition \ref{pr:main31}, one can show that either $h_A$ is irreducible or $h_A=h_1h_2$, where $h_1,h_2\in\bZ[t]$ are irreducible of degree $2$ and, analogously, for $h_B$.
In particular, {\em e.g.}, one cannot have $h_A=f_1f_2$, where $f_1,f_2\in\bZ[t]$, $f_1$ is linear, and $f_2$ is irreducible of degree $3$.
\end{example}

\section{Irreducible characteristic polynomials, ideal classes}

We first show that in the case of irreducible characteristic polynomials $h_A$, $h_B$,
it is enough to assume that $T$ takes an eigenvector of $A$ 
to an eigenvector of $B$ for $T(G_A)=G_B$.

\begin{lem}\label{l:suff}
Let $A,B\in\M_n(\bZ)$ be non-singular and let $G_A,G_B$ have characteristics \eqref{eq:char10}, \eqref{eq:char20}, respectively. Assume the characteristic polynomials
of $A$, $B$ are irreducible. 
Assume there exist eigenvalues $\la,\mu\in \cO_K$ corresponding to eigenvectors ${\bf u},{\bf v}\in K ^n$ of $A,B$, respectively, such that $\la$, $\mu$ have the same prime ideal divisors in the ring of integers of $K$. Then
$\cP=\cP(A)=\cP(B)$, $\cP'=\cP'(A)=\cP'(B)$, and $\caR=\caR(A)=\caR(B)$.
If $T\in\GL_n(\caR)$, $T({\bf u})={\bf v}$, 
and $T$ $($resp., $T^{-1})$ satisfies the condition $(A,B,p)$  
$($resp., $(B,A,p))$ for any $p\in\cP'$, then $T(G_A)=G_B$.
\end{lem}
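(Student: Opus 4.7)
The plan is to verify each condition of Theorem \ref{th:prelim0}, since $T(G_A)=G_B$ is exactly characterized by those conditions. The hypotheses already supply $T\in\GL_n(\caR)$ and the column conditions $(A,B,p)$, $(B,A,p)$ for $p\in\cP'$, so what remains to establish is $\cP(A)=\cP(B)$, $\cP'(A)=\cP'(B)$, $\caR(A)=\caR(B)$, the equality of multiplicities $t_p(A)=t_p(B)$ for all $p$, and $T(X_{A,\fp})=X_{B,\fp}$ for every prime ideal $\fp$ of $\cO_K$ above any $p\in\cP'$.

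The key preliminary step is to deduce $\bQ(\la)=\bQ(\mu)$ from $T({\bf u})={\bf v}$. Enlarging $K$ if necessary, I will assume $K/\bQ$ is Galois with group $G$, and let $G_\la,G_\mu\le G$ denote the stabilizers of $\la,\mu$. For $\sigma\in G_\la$, applying $\sigma$ to $A{\bf u}=\la{\bf u}$ shows that $\sigma({\bf u})$ is again a $\la$-eigenvector of $A$; since $h_A$ is irreducible the $\la$-eigenspace is one-dimensional, so $\sigma({\bf u})=c{\bf u}$ for some $c\in K^\times$. The $\bQ$-rationality of $T$ then gives $\sigma({\bf v})=\sigma(T{\bf u})=T(\sigma({\bf u}))=c{\bf v}$; but $\sigma({\bf v})$ is a $\sigma(\mu)$-eigenvector of $B$, so comparing simple eigenspaces forces $\sigma(\mu)=\mu$. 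Hence $G_\la\subseteq G_\mu$, and the symmetric argument with $T^{-1}$ acting on ${\bf v}$ gives the reverse inclusion. Thus $L:=\bQ(\la)=\bQ(\mu)$, of degree $n$ over $\bQ$.

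With this identification, the eigenvalues of $A$ and $B$ are the Galois conjugates $\{\sigma(\la)\}$ and $\{\sigma(\mu)\}$, both indexed by $G/G_\la$. For any prime $\fp\subset\cO_K$ above a rational prime $p$, the chain
\[
\fp\mid\sigma(\la)\iff\sigma^{-1}(\fp)\mid\la\iff\sigma^{-1}(\fp)\mid\mu\iff\fp\mid\sigma(\mu)
\]
applies the shared-prime-divisors hypothesis in its middle step. Counting matching cosets yields $t_p(A)=t_p(B)$ for every $p$; and since $p\in\cP(A)$ iff some prime ideal above $p$ divides $\la$ (equivalently, $p\mid\det A=\pm N_{L/\bQ}(\la)$), the same hypothesis gives $\cP(A)=\cP(B)$, hence $\cP'(A)=\cP'(B)$ and $\caR(A)=\caR(B)$.

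Finally, to check $T(X_{A,\fp})=X_{B,\fp}$ for $p\in\cP'$: by the description in Section \ref{s:eigen}, $X_{A,\fp}$ is spanned over $K$ by those eigenvectors $\sigma({\bf u})$ for which $\fp\mid\sigma(\la)$, and similarly for $B$. The Galois-equivariance $T(\sigma({\bf u}))=\sigma(T{\bf u})=\sigma({\bf v})$ combined with the equivalence above sends this spanning family bijectively onto the corresponding one for $X_{B,\fp}$, yielding the desired identity. All hypotheses of Theorem \ref{th:prelim0} are then satisfied, and the conclusion $T(G_A)=G_B$ follows. The main obstacle is the Galois-equivariance reduction identifying $\bQ(\la)$ with $\bQ(\mu)$; once this is in place, the remaining verifications are bookkeeping driven by the irreducibility of $h_A,h_B$ and the shared-prime-divisors hypothesis.
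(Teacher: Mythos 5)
Your proof is correct and follows essentially the same route as the paper: reduce everything to the hypotheses of Theorem~\ref{th:prelim0}, and use the fact that $A,B,T$ are defined over $\bQ$ (hence commute with the $\Gal(K/\bQ)$-action) to propagate the single equality $T({\bf u})={\bf v}$ and the single prime-divisor hypothesis to all Galois conjugates. Two small remarks.

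First, the ``key preliminary step'' $\bQ(\la)=\bQ(\mu)$ is correct but not actually needed, and the paper omits it. For the coset counting one only needs that the sets $S_A=\{\sg\in G:\fp\mid\sg(\la)\}$ and $S_B=\{\sg\in G:\fp\mid\sg(\mu)\}$ coincide (which follows directly from the prime-divisor hypothesis, with no reference to $T$) and that the stabilizers $G_\la$, $G_\mu$ have the same index $n$ in $G$ (since both characteristic polynomials are irreducible of degree $n$); then $t_p(A)=|S_A|/|G_\la|=|S_B|/|G_\mu|=t_p(B)$ without ever identifying the two subfields. Likewise $T(X_{A,\fp})=X_{B,\fp}$ needs only $T(\sg({\bf u}))=\sg({\bf v})$ together with the equivalence $\fp\mid\sg(\la)\iff\fp\mid\sg(\mu)$, again with no need for a common index set.

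Second, a minor logical point: the lemma asserts $\cP(A)=\cP(B)$, $\cP'(A)=\cP'(B)$, $\caR(A)=\caR(B)$ as a consequence of the prime-divisor hypothesis alone, before the clause introducing $T$. Because you route the argument through $\bQ(\la)=\bQ(\mu)$, which you derive from $T({\bf u})={\bf v}$, your proof of these equalities is conditional on the existence of such a $T$. This is easily repaired by proving them directly as above (or simply via $\det A=\pm N_{\bQ(\la)/\bQ}(\la)$ and the analogous formula for $B$), which is what the paper does.
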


\begin{proof}
By enlarging $K$ if necessary, without loss of generality, we can assume that $K$ is Galois over $\bQ$. For any
$\sg\in\Gal(K/\bQ)$, $\sg(\la)$ and $\sg(\mu)$ have the same prime ideal divisors. Thus, since $\Gal(K/\bQ)$ acts transitively on roots of irreducible polynomials $h_A,h_B\in\bZ[t]$, we have $t_p(A)=t_p(B)$, $\cP(A)=\cP(B)$, 
$\cP'=\cP'(A)=\cP'(B)$, and hence $\caR(A)=\caR(B)$. Furthermore, for $p\in\cP'$, a prime ideal $\fp$ of $\cO_K$ above $p$, and $\sg\in\Gal(K/\bQ)$, $\sg({\bf u})$ (resp., $\sg({\bf v})$) is an eigenvector of $A$ (resp., $B$) corresponding to $\sg(\la)$ (resp., $\sg(\mu)$) and $T(\sg({\bf u}))=\sg({\bf v})$, since $A,B,T$ are defined over $\bQ$. Thus,
$T(X_{A,\fp})=X_{B,\fp}$ and the lemma follows from Theorem \ref{th:prelim0}.
\end{proof}

\begin{rem}
We know that when $G_A\cong G_B$ and $n\geq 4$, not every isomorphism 
between $G_A$ and $G_B$ takes an eigenvector of $A$ to an eigenvector of $B$ (see Example \ref{ex:5} below). Also, in general, if $n>2$, $G_A\cong G_B$, and the characteristic polynomial of $A$ is irreducible, then not necessarily the characteristic polynomial of $B$ is also irreducible
(see Example \ref{ex:6} below).
\end{rem}



We now recall generalized ideal classes introduced in \cite{s}.
Let $A,B\in\M_n(\bZ)$ be non-singular and let 
$\la\in\overline{\bQ}$ be an eigenvalue of $A$ corresponding to an eigenvector 
${\bf u}=\left(\begin{matrix}
u_1 & u_2 & \ldots & u_n
\end{matrix}
\right)^t\in \bQ(\la)^n$  of $A$. For the rest of this section we assume that the characteristic polynomials of $A$, $B$ are irreducible. Denote 
\begin{eqnarray*}
I_{\bZ}(A,\la)&=&\left\{ m_1 u_1+\cdots +m_n u_n\,\vert\,m_1,\ldots,m_n\in\bZ\right\}\subset \bQ(\la), \\
I_{\caR}(A,\la)&=&I_{\bZ}(A,\la)\otimes_{\bZ}\caR\subset \bQ(\la),\,\,\caR=\caR(A),
\end{eqnarray*}
where $\caR$ is given by \eqref{eq:rrr}.
Since $\la{\bf u}=A{\bf u}$ and $A$ has integer entries, $I_{\bZ}(A,\la)$ is a $\bZ[\la]$-module and 
$I_{\caR}(A,\la)$ is an $\caR[\la]$-module. Let 
$\mu\in\overline{\bQ}$ be an eigenvalue of $B$, and let $K$ be a number field 
with ring of integers $\cO_K$ such that $\la,\mu\in\cO_K$. Assume 
$\caR=\caR(A)=\caR(B)$ (which is a necessary condition for $G_A\cong G_B$). There exists 
$T\in\GL_n(\caR)$ such that $T(\bf u)$ is an eigenvector of $B$ corresponding to $\mu$ if and only if 
$$
I_{\caR}(A,\la)=yI_{\caR}(B,\mu),\quad y\in K^{\times},
$$
denoted by 
$[I_{\caR}(A,\la)]=[I_{\caR}(B,\mu)]$. We know that 
$[I_{\caR}(A,\la)]=[I_{\caR}(B,\mu)]$ is among sufficient conditions for $G_A\cong G_B$ for any $n\geq 2$ (Lemma \ref{l:suff} above). In \cite[Theorem 6.6]{s} we prove that this is also a necessary condition when $n=2$. 
Proposition \ref{pr:main3} below extends the result to an arbitrary $n$ under an additional assumption that there exists $t_p$ coprime with $n$ (denoted by $(n,t_p)=1$). In fact, the proposition shows more, namely, than {\em any}  isomorphism takes an eigenvector of $A$ to an eigenvector of $B$. It turns out that $[I_{\caR}(A,\la)]=[I_{\caR}(B,\mu)]$ is not a necessary condition for $G_A\cong G_B$ for an arbitrary $n$ (see Example \ref{ex:5} below, where the condition 
$(n,t_p)=1$ does not hold). The next proposition is a direct consequence of Proposition \ref{pr:main31}, since if the characteristic polynomial of $A$ is irreducible, then clearly, any isomorphism between $G_A$, $G_B$ is irreducible.


\begin{prop}\label{pr:main3}
Let $A,B\in\M_n(\bZ)$ be non-singular. Assume the characteristic polynomial of $A$ is irreducible  and there exists a prime $p\in\cP'(A)$ with $(n,t_p(A))=1$. Let $K\subset \overline{\bQ}$ be a finite extension of $\bQ$ that contains the eigenvalues of both $A$ and $B$.
If $T\in\GL_n(\bQ)$ is an isomorphism from $G_A$ to $G_B$ $($equivalently, $T(G_A)=G_B)$, then there exist eigenvectors ${\bf u},{\bf v}\in K ^n$
corresponding to eigenvalues $\la,\mu\in \cO_K$ of $A,B$, respectively, such that $T({\bf u})={\bf v}$, and $\la$, $\mu$ have the same prime ideal divisors in 
$\cO_K$.
\end{prop}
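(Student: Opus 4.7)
The proof is essentially a one-line reduction to Proposition \ref{pr:main31}, so the plan is to argue exactly why that reduction goes through and to handle the slight mismatch between ``$K$ contains the eigenvalues of $A$ and $B$'' in the current statement and the smaller field $\bQ(\la)=\bQ(\mu)$ appearing in Proposition \ref{pr:main31}.

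First I would show that any isomorphism $T\colon G_A\rar G_B$ is automatically irreducible in the sense of Definition \ref{def:1}. Indeed, reducibility of $T$ requires a factorization $h_A=fg$ with both $f,g\in\bZ[t]$ non-constant; since $h_A$ is irreducible of degree $n\ge 2$, no such factorization exists, so $T$ is irreducible by default.

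Next I would simply invoke Proposition \ref{pr:main31}, applied to the prime $p\in\cP'(A)$ with $(n,t_p(A))=1$ whose existence is part of the hypothesis. That proposition yields eigenvalues $\la,\mu\in\overline{\bQ}$ of $A,B$ with $\bQ(\la)=\bQ(\mu)$, an eigenvector ${\bf u}\in\bQ(\la)^n$ of $A$ such that ${\bf v}:=T({\bf u})$ is an eigenvector of $B$ for $\mu$, and the statement that $\la,\mu$ have the same prime ideal divisors in $\cO_{\bQ(\la)}$. Taking this ${\bf u}$ and setting ${\bf v}=T({\bf u})$ gives the required pair of eigenvectors; after clearing denominators, we may assume $\la,\mu\in\cO_K$ (and in fact already in $\cO_{\bQ(\la)}$).

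The only remaining point is to upgrade ``same prime divisors in $\cO_{\bQ(\la)}$'' to ``same prime divisors in $\cO_K$'', since the current proposition allows $K$ to be any finite extension containing the eigenvalues of both $A$ and $B$. This is routine: if $\fP\subset\cO_K$ is a prime ideal, then $\fp:=\fP\cap\cO_{\bQ(\la)}$ is a prime of $\cO_{\bQ(\la)}$, and $\fP\mid(\la)$ in $\cO_K$ iff $\fp\mid(\la)$ in $\cO_{\bQ(\la)}$, and likewise for $\mu$. Since $\fp\mid(\la)\Leftrightarrow\fp\mid(\mu)$, we obtain $\fP\mid(\la)\Leftrightarrow\fP\mid(\mu)$, finishing the proof. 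I do not anticipate any genuine obstacle here; the entire content of the result is absorbed in Proposition \ref{pr:main31}.
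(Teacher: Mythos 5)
Your proposal is correct and matches the paper's approach: the paper likewise observes that irreducibility of $h_A$ forces any isomorphism $G_A\rar G_B$ to be irreducible in the sense of Definition \ref{def:1} and then applies Proposition \ref{pr:main31} directly. You are slightly more careful than the paper in verifying that the ``same prime ideal divisors'' conclusion lifts from $\cO_{\bQ(\la)}$ to the possibly larger ring $\cO_K$ appearing in the statement, a routine detail the paper leaves implicit but which your contraction argument $\fP\mapsto\fP\cap\cO_{\bQ(\la)}$ handles correctly.
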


Combining Proposition \ref{pr:main3} with Lemma \ref{l:suff} and Theorem \ref{th:prelim0}, we get the following necessary and sufficient criterion for $G_A\cong G_B$ under the additional condition in Proposition \ref{pr:main3}.

\begin{prop}\label{pr:main3iff}
Let $A,B\in\M_n(\bZ)$ be non-singular with irreducible characteristic 
polynomials and let $G_A,G_B$ have characteristics \eqref{eq:char10}, \eqref{eq:char20}, respectively. Assume there exists a prime $p$ with 
$(t_p(A),n)=1$. 
Let $K\subset \overline{\bQ}$ be a finite extension of $\bQ$ that contains the eigenvalues of both $A$ and $B$.
Then $T\in\GL_n(\bQ)$ is an isomorphism from $G_A$ to $G_B$ if and only if there exist eigenvalues $\la,\mu\in \cO_K$ corresponding to eigenvectors ${\bf u},{\bf v}\in K ^n$ of $A,B$, respectively, such that $\la$, $\mu$ have the same prime ideal divisors in $\cO_K$, 
$T\in\GL_n(\caR)$, $T({\bf u})={\bf v}$, and $T$ $($resp., $T^{-1})$ satisfies the condition $(A,B,p)$  
$($resp., $(B,A,p))$ for any $p\in\cP'$.
\end{prop}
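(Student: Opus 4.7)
The plan is to prove the biconditional by splicing together the three results already available: Proposition \ref{pr:main3} supplies the structural necessary condition (existence of an eigenvector pair related by $T$, with eigenvalues sharing prime ideal divisors), Theorem \ref{th:prelim0} supplies the remaining technical necessary conditions ($T\in\GL_n(\caR)$ and the column conditions $(A,B,p)$, $(B,A,p)$), and Lemma \ref{l:suff} provides sufficiency from exactly that combined data. So the proposition is essentially a packaging statement.

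For the forward direction, I would begin by assuming $T(G_A)=G_B$. Since $h_A$ is irreducible by hypothesis, any isomorphism $T\colon G_A\rar G_B$ is automatically irreducible in the sense of Definition \ref{def:1} (there is no nontrivial factorization of $h_A$ to split off). Together with the hypothesis that some $p\in\cP'(A)$ satisfies $(n,t_p(A))=1$, Proposition \ref{pr:main31} (equivalently, its specialization Proposition \ref{pr:main3}) applies and yields eigenvalues $\la,\mu\in\cO_K$ with the same prime ideal divisors in $\cO_K$, together with eigenvectors ${\bf u},{\bf v}\in K^n$ of $A$, $B$, respectively, such that $T({\bf u})={\bf v}$. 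Next, Theorem \ref{th:prelim0} gives $\cP(A)=\cP(B)$, $\caR(A)=\caR(B)$ (so $\caR$ is well-defined), $T\in\GL_n(\caR)$, and the conditions $(A,B,p)$, $(B,A,p)$ on $T$ and $T^{-1}$ for all $p\in\cP'$.

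For the backward direction, I would verify that all hypotheses of Lemma \ref{l:suff} are directly at hand: both $h_A$, $h_B$ are irreducible; eigenvalues $\la,\mu$ with eigenvectors ${\bf u},{\bf v}$ and $T({\bf u})={\bf v}$ are given; $\la$, $\mu$ share the same prime ideal divisors in $\cO_K$; $T\in\GL_n(\caR)$; and the column conditions hold. Lemma \ref{l:suff} then deduces $\cP=\cP(A)=\cP(B)$, $\cP'=\cP'(A)=\cP'(B)$, $\caR=\caR(A)=\caR(B)$ (using transitivity of $\Gal(K/\bQ)$ on the roots of the irreducible polynomials $h_A$, $h_B$), and concludes $T(G_A)=G_B$.

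I do not anticipate any substantive obstacle, since the hard work — the rank/Galois-orbit arithmetic that forces an irreducible isomorphism to send an eigenvector to an eigenvector when some $t_p$ is coprime to $n$ — is already encapsulated in Proposition \ref{pr:main31}. The only minor bookkeeping issue is to note that the condition ``$\la$ and $\mu$ have the same prime ideal divisors in $\cO_K$'' is well-posed only after we know $\caR(A)=\caR(B)$; but this compatibility is built into Lemma \ref{l:suff}'s setup and into Theorem \ref{th:prelim0}, so the reading is consistent in both directions of the biconditional.
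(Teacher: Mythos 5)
Your proposal is correct and matches the paper's own approach exactly: the paper gives no separate proof of this proposition but instead states immediately before it that it follows by ``combining Proposition~\ref{pr:main3} with Lemma~\ref{l:suff} and Theorem~\ref{th:prelim0},'' which is precisely the assembly you carry out (necessity from Proposition~\ref{pr:main3} plus Theorem~\ref{th:prelim0}, sufficiency from Lemma~\ref{l:suff}). Your closing bookkeeping remark is slightly off—the prime-ideal-divisor condition on $\la,\mu$ is well-posed independently of $\caR$; it is only the condition $T\in\GL_n(\caR)$ that presupposes $\caR(A)=\caR(B)$, which Lemma~\ref{l:suff} supplies—but this does not affect the validity of the argument.
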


In the case $n=2$, to decide whether $G_A$ and $G_B$ are isomorphic, we can omit conditions $(A,B,p)$, $(B,A,p)$.

\begin{prop}\cite[Theorem 6.6]{s}\label{pr:main2}
Let $A,B\in\M_2(\bZ)$ be non-singular. Assume the characteristic polynomial of $A$ is irreducible and $\cP'(A)\ne\emptyset$. 
Then $G_A \cong G_B$ if and only if there exist eigenvalues $\la,\mu\in \cO_K$ 
of $A,B$, respectively, such that $\la$, $\mu$ have the same prime ideal divisors in $\cO_K$
and
$$
[I_{\caR}(A,\la)]=[I_{\caR}(B,\mu)],\quad \caR=\caR(A).
$$
\end{prop}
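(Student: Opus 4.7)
The plan is to reduce Proposition~\ref{pr:main2} to the general-$n$ results already developed, by exploiting a special feature of the two-dimensional irreducible case. The key initial observation is that if $h_A$ is irreducible of degree~$2$ and $p\in\cP'(A)$, then the reduction $\bar h_A\pmod p$ cannot be $x^2$ (otherwise $p\in\cP\setminus\cP'$), so it factors as $x(x-c)$ with $c\not\equiv 0\pmod p$. Hence $t_p(A)=1$, and by Hensel's lemma $h_A$ splits into two distinct linear factors over $\bQ_p$, so every prime $\fp\mid p$ in $\cO_K$ satisfies $K_\fp=\bQ_p$. In particular $(n,t_p)=1$, placing us in the setting of Propositions~\ref{pr:main3} and~\ref{pr:main3iff}.

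For the forward implication, Proposition~\ref{pr:main3} yields eigenvectors ${\bf u},{\bf v}\in K^2$ with $T{\bf u}={\bf v}$, corresponding to eigenvalues $\la,\mu\in\cO_K$ sharing the same prime ideal divisors, and $T\in\GL_2(\caR)$ by Theorem~\ref{th:prelim0}. Writing $T{\bf u}={\bf v}$ entrywise, $v_1,v_2\in\caR u_1+\caR u_2=I_{\caR}(A,\la;{\bf u})$, so $I_{\caR}(B,\mu;{\bf v})\subseteq I_{\caR}(A,\la;{\bf u})$; applying the same argument to $T^{-1}$ gives the reverse inclusion, hence the equality of these $\caR$-submodules of $K$, which is exactly $[I_{\caR}(A,\la)]=[I_{\caR}(B,\mu)]$.

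For the reverse implication, the shared-prime-divisors hypothesis together with Galois-transitivity on the roots of the irreducible $h_A$ (and $h_B$, which must also be irreducible by Proposition~\ref{pr:main31} once we produce an irreducible isomorphism) give $\cP'(A)=\cP'(B)$, $\caR(A)=\caR(B)$, and $t_p(A)=t_p(B)=1$ for all $p\in\cP'$. Choose eigenvectors ${\bf u},{\bf v}$ with $\caR u_1+\caR u_2=\caR v_1+\caR v_2$; since $u_1,u_2$ and $v_1,v_2$ are each $\bQ$-linearly independent (as $\la,\mu\notin\bQ$), each pair is an $\caR$-basis of this common rank-$2$ $\caR$-module, so the change-of-basis matrix gives a unique $T\in\GL_2(\caR)$ with $T{\bf u}={\bf v}$. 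By Lemma~\ref{l:suff}, to conclude $T(G_A)=G_B$ it remains to verify the local conditions $(A,B,p)$ and $(B,A,p)$ for each $p\in\cP'$.

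The main (and only) technical obstacle is this last local verification. Because $n-t_p=1$, each condition collapses to a single scalar congruence in $\bZ_p$. Since $K_\fp=\bQ_p$, via the embedding $K\hookrightarrow K_\fp$ the ratios $u_2/u_1$ and $v_2/v_1$ lie in $\bQ_p$ and, after the basis normalization from the proof of Lemma~\ref{l:vect}, can be identified with $\al_{p12}(A)$ and $\al_{p12}(B)$ respectively. A direct substitution of $v_1=T_{11}u_1+T_{12}u_2$, $v_2=T_{21}u_1+T_{22}u_2$ simplifies
\[
T_{22}-T_{12}\al_{p12}(B)\;=\;\frac{\det T}{T_{11}+T_{12}\al_{p12}(A)}\;=\;\det T\cdot\frac{u_1}{v_1},
\]
and the paired condition for $T^{-1}$ simplifies to its reciprocal $\det(T^{-1})\cdot v_1/u_1$. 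Together they are equivalent to $\det T\cdot u_1/v_1\in\bZ_p^{\times}$, i.e.\ a matching of $p$-adic valuations $v_p(\det T)=v_p(v_1)-v_p(u_1)$ at each $\fp\mid p$. This I plan to obtain from the localized $\caR$-ideal equality $I_{\caR}(A,\la;{\bf u})=I_{\caR}(B,\mu;{\bf v})$: under the splitting $K\otimes_\bQ\bQ_p\cong\bQ_p\oplus\bQ_p$ (two primes $\fp,\fp'\mid p$), the pair of localized ideals forces $\min(v_p(u_1),v_p(u_2))=\min(v_p(v_1),v_p(v_2))$ in each factor, and the identity $\det T=(v_1\bar v_2-v_2\bar v_1)/(u_1\bar u_2-u_2\bar u_1)$ (with $\bar{}$ denoting the second embedding) then yields the required valuation identity. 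The symmetric condition for $T^{-1}$ follows for free from $\det T\cdot\det T^{-1}=1$.
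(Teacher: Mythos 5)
The paper does not prove Proposition~\ref{pr:main2} directly (it is cited from \cite[Theorem~6.6]{s}); the comparable argument in this paper is the proof of Lemma~\ref{l:mainn}, which generalizes the statement to $t_p=n-1$. Your necessity argument is correct and parallels the paper's route through Propositions~\ref{pr:main31} and~\ref{pr:main3iff}. The sufficiency part, however, diverges from the paper, and the final step does not close.

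Your plan is to pick eigenvectors ${\bf u},{\bf v}$ with $I_\caR(A,\la;{\bf u})=I_\caR(B,\mu;{\bf v})$ literally equal, take the unique $T\in\GL_2(\caR)$ with $T{\bf u}={\bf v}$, and argue that the ideal equality already forces the local conditions $(A,B,p)$, $(B,A,p)$. This is not true. Both conditions together amount to the induced scaling $\beta=T_{22}-T_{12}\al_{p12}(B)$ on the rank-$1$ reduced quotient lying in $\bZ_p^\times$; writing $\det T=(T_{11}+T_{12}\al_{p12}(A))\cdot\beta$ and observing that the min-valuation matching forces $v_p\bigl(T_{11}+T_{12}\al_{p12}(A)\bigr)=0$, the condition becomes exactly $v_p(\det T)=0$. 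But the ideal equality only gives $\det T\in\caR^\times$, and $\caR^\times$ contains all integer powers of the primes in $\cP$, so $v_p(\det T)$ need not vanish. Concretely, replacing ${\bf v}$ by $p{\bf v}$ leaves $I_\caR(B,\mu;\cdot)$ unchanged (since $p\in\caR^\times$) yet scales $\det T$ by $p^2$, so the same ideal equality is compatible with infinitely many $T$'s, most of which violate the local conditions. Your claimed passage from $\min(v_\fp(u_1),v_\fp(u_2))=\min(v_\fp(v_1),v_\fp(v_2))$ plus the determinant identity to $v_p(\det T)=v_p(v_1)-v_p(u_1)$ is therefore a non sequitur.

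The missing ingredient is a rescaling step. This is precisely what the paper's Lemma~\ref{l:mainn} supplies: starting from any $T$ carrying an eigenvector of $A$ to one of $B$, one replaces $T$ by $p^kT$ for suitable $k$ at each $p\in\cP'$ (each such replacement shifts $v_p(\beta)$ by exactly $k$ and leaves the condition at primes $q\ne p$ unchanged, because $p\in\bZ_q^\times$), so a single $a\in\caR^\times$ makes $aT$ satisfy all the local conditions simultaneously. If you insert that argument after constructing your $T$, your proof goes through and becomes essentially the paper's. As currently written, the proof is incomplete.
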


Proposition \ref{pr:main2} can be generalized to an arbitrary $n$ under an additional condition, which automatically holds when $n=2$. Namely, $t_p=n-1$ for any $p\in\cP'$.

\begin{lem}\label{l:mainn}
Let $A,B\in\M_n(\bZ)$ be non-singular with irreducible characteristic polynomials, $\cP'(A)\ne\emptyset$, and $t_p(A)=n-1$ for any $p\in\cP'(A)$. 
Then $G_A \cong G_B$ if and only if there exist eigenvalues $\la,\mu\in \cO_K$ 
of $A,B$, respectively, such that $\la$, $\mu$ have the same prime ideal divisors in $\cO_K$
and
$$
[I_{\caR}(A,\la)]=[I_{\caR}(B,\mu)].
$$
\end{lem}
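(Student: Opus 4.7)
The plan is to derive Lemma \ref{l:mainn} from Proposition \ref{pr:main3iff}, which is available here because the hypothesis $t_p(A) = n - 1$ forces $\gcd(t_p(A), n) = \gcd(n-1, n) = 1$ for every $p \in \cP'(A)$. For the forward implication, assuming $G_A \cong G_B$, Proposition \ref{pr:main3iff} furnishes a $T \in \GL_n(\caR)$ with $T({\bf u}) = {\bf v}$ for eigenvectors whose eigenvalues $\la, \mu \in \cO_K$ share the same prime ideal divisors; the characterization of ideal classes recalled before Proposition \ref{pr:main3} (following \cite{s}) then translates the existence of such $T$ into $[I_\caR(A, \la)] = [I_\caR(B, \mu)]$.

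For the backward implication, I would start from the hypothesized ideal class equality and shared-prime-divisor condition, apply the same characterization to produce $T \in \GL_n(\caR)$ sending ${\bf u}$ to a $\mu$-eigenvector of $B$, and rescale ${\bf v}$ harmlessly so that $T({\bf u}) = {\bf v}$ exactly. Galois-equivariance of $T$ (which is defined over $\bQ$) combined with the shared-prime-divisor hypothesis then yields, exactly as in the proof of Lemma \ref{l:suff}, the equalities $\cP(A) = \cP(B)$, $\cP'(A) = \cP'(B)$, $t_p(A) = t_p(B)$ for every $p$, $\caR(A) = \caR(B)$, together with $T(X_{A, \fp}) = X_{B, \fp}$ for every $p \in \cP'$ and prime ideal $\fp$ of $\cO_K$ above $p$. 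Thus every hypothesis of Theorem \ref{th:prelim0} is in place except for conditions $(A, B, p)$ on $T$ and $(B, A, p)$ on $T^{-1}$.

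The hypothesis $t_p = n - 1$ is what lets one dispose of these remaining conditions, and verifying them is the main obstacle of the proof. Since the index set $\{t_p + 1, \ldots, n\}$ collapses to the singleton $\{n\}$, condition $(A, B, p)$ becomes a single $p$-adic integrality constraint on the $n$-th column of $T$; equivalently, in the block decomposition from the proof of Theorem \ref{th:main1}, the reduced block $T_2$ is a single scalar in $\bQ_p$ and must lie in $\bZ_p^\times$. The plan is to deduce $v_p(T_2) = 0$ via the identity $v_p(\det T) = v_p(\det T_1) + v_p(T_2)$: the left side is controlled because $\det T \in \caR^\times$, while $v_p(\det T_1)$ admits an explicit description in the ${\bf x}_{pi}$-basis of $D_p(A)$ produced by Lemma \ref{l:vect}, whose generators are, by Lemma \ref{l:div}, built from eigenvectors of $A$ for eigenvalues divisible by $\fp$; the shared-prime-divisor hypothesis forces the two valuations to match. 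Should a direct computation not yield integrality for the initial choice of $T$, one composes with an element of the ${\bf u}$-stabilizer in $\GL_n(\caR)$ to adjust the $n$-th column, handling the finitely many conditions indexed by $p \in \cP'$ simultaneously via approximation. Once the conditions are in place, Theorem \ref{th:prelim0} (equivalently, Lemma \ref{l:suff}) concludes $T(G_A) = G_B$.
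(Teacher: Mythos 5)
Your forward direction is fine and matches the paper: since $t_p = n-1$ is coprime to $n$, Proposition \ref{pr:main3iff} applies and gives the eigenvalue/ideal-class conditions. You also correctly identify the crux of the converse: after producing $T\in\GL_n(\caR)$ with $T({\bf u})={\bf v}$, the only missing hypotheses of Theorem \ref{th:prelim0} are the conditions $(A,B,p)$ and $(B,A,p)$, and because $t_p = n-1$ the reduced block $T_2$ in the $p$-adic decomposition is a single scalar in $\bQ_p$, so each condition amounts to $T_2\in\bZ_p^\times$.

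The gap is in how you propose to get $T_2\in\bZ_p^\times$. Your primary argument — deducing $v_p(T_2)=0$ from $v_p(\det T)=v_p(\det T_1)+v_p(T_2)$ because ``$\det T\in\caR^\times$ controls the left side'' — does not work: $\caR^\times$ consists of $\pm$ products of arbitrary powers of primes in $\cP$, so $v_p(\det T)$ is completely unconstrained, and indeed the initial $T$ produced by the ideal-class equality can easily have $v_p(T_2)\ne 0$ (take $A=B$ and $T=pI_n$ for a toy illustration). Your fallback — composing with an element of the ${\bf u}$-stabilizer in $\GL_n(\caR)$ and ``approximating'' over the finitely many $p\in\cP'$ — is both vaguer and harder than necessary, and it is not what the paper does: a genuine stabilizer of the vector ${\bf u}$ (rather than of the line $K{\bf u}$) over $\bQ$ would fix all of $D_p(A)$ pointwise, which does not obviously give you the freedom to move $T_2$. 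The paper's trick is much simpler: replace $T$ by a \emph{scalar} multiple $aT$ with $a=\prod_{p\in\cP'}p^{k_p}\in\caR^\times$. Scaling clearly preserves $T(D_p(A))=D_p(B)$ and $T\in\GL_n(\caR)$, it multiplies the scalar $T_2$ at $p$ by $p^{k_p}\cdot(\text{unit in }\bZ_p)$ since primes $q\ne p$ are $p$-adic units, so one may choose $k_p=-v_p(T_2)$ independently for each $p$ with no approximation argument at all, and the inverse condition is automatic since $(T_2)^{-1}\in\bZ_p^\times$ once $T_2\in\bZ_p^\times$. You should replace the valuation computation and the stabilizer fallback with this single scaling step.
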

\begin{proof}
By Proposition \ref{pr:main3iff}, it is enough to show the sufficient part. As in the proof of Lemma \ref{l:suff}, 
we have
$$\cP=\cP(A)=\cP(B),\,\, \cP'=\cP'(A)=\cP'(B),\,\,\caR=\caR(A)=\caR(B),
$$
and $t_p=t_p(A)=t_p(B)$ for any prime $p\in\bN$. Note that $[I_{\caR}(A,\la)]=[I_{\caR}(B,\mu)]$ is equivalent to the existence of 
$T\in\GL_n(\caR)$ such that $T(\bf u)$ is an eigenvector of $B$ corresponding to $\mu$ for an eigenvector ${\bf u}$ of $A$ corresponding to $\la$. As in the 
proofs of Theorem \ref{th:prelim0} and Lemma \ref{l:suff}, such $T$ induces an isomorphism between the divisible parts $D_p(A)$ and $D_p(B)$ of $\overline{G}_{A,p}$ and
$\overline{G}_{B,p}$, respectively, for any $p$. 
Under the assumption $t_p=n-1$, $p\in\cP'$, the reduced parts $R_p(A)$ and $R_p(B)$ of $\overline{G}_{A,p}$ and
$\overline{G}_{B,p}$, respectively,  are free $\bZ_p$-modules of rank $1$. Hence, 
there exists $k\in\bZ$ such that for
$T'=p^kT$ we have 
\bbe\label{eq:tt'}
T'(R_p(A))\subseteq D_p(B)\oplus R_p(B)\quad\text{ and }\quad
(T')^{-1}(R_p(B))\subseteq D_p(A)\oplus R_p(A).
\ee
Indeed, as follows from \eqref{eq:dec1} and \eqref{eq:dec2}, 
$T({\bf e}_n)=a+y{\bf e}_n$ for some $a\in D_p(B)$ and $y\in\bQ_p$. Let $y=p^{-k}u$ for some $k\in\bZ$ and $u\in\bZ_p^{\times}$. Then 
$T'({\bf e}_n)=p^ka+u{\bf e}_n$, where $p^ka\in D_p(B)$ and hence $T'({\bf e}_n)\in\overline{G}_{B,p}$, since $\overline{G}_{B,p}=D_p(B)\oplus\bZ_p{\bf e}_n$. Clearly, $T'$ still induces an isomorphism between $D_p(A)$, $D_p(B)$, and $T'\in\GL_n(\caR)$, since $p\in\cP'$. Moreover, for a prime $q$ distinct from $p$, 
$qT'$ also satisfies \eqref{eq:tt'}, since $q\in\bZ_p^{\times}$. 
Since $\cP'$ is finite, it shows that there exists $a\in\caR^{\times}$ such that $aT\in\GL_n(\caR)$ is an isomorphism from $\overline{G}_{A,p}$ to $\overline{G}_{B,p}$ for any $p\in\cP'$ and hence $aT$ is an isomorphism from ${G}_{A}$ to ${G}_{B}$ by
Corollary \ref{cor:loc'}.
\end{proof}

\section{Examples}

\begin{example} One of the easiest examples is when $\cP'=\emptyset$. Let 
$$
A=\left(\begin{matrix}
0 & 4 \\
2 & 0 
\end{matrix}
\right),\,\,
B=\left(\begin{matrix}
0 & 8 \\
1 & 0 
\end{matrix}
\right).
$$
Both $A$ and $B$ have the same characteristic polynomial $x^2-8$, irreducible over $\bQ$, so that $A$ and $B$ are conjugate over $\bQ$ and have the same eigenvalues. There is only one prime $p=2$ that divides $\det A$ and it also divides $\tr A=0$. Hence, by Lemma \ref{l:repr'},
$$
G_A=G_B=<{\bf e}_1,{\bf e}_2,2^{-\infty}{\bf e}_1, 2^{-\infty}{\bf e}_2>.
$$
In general, if 
$h_A\equiv x^n\,(\text{mod }p)$ for any prime $p$ that divides $\det A$, then
$$
G_A=<p^{-k}{\bf e}_i\,\vert\,\,i\in\{1,2,\ldots,n\},\,\,p\,\vert\det A,\,k\in\bN\cup\{0\}>.
$$
\end{example}

\begin{example}\label{ex:rat1}
In this and the next examples we show how Theorem \ref{th:main1} can be effectively used in the case when the characteristic polynomials are not irreducible.
Let 
$$
A=\left(\begin{matrix}
88 & -68 \\
34 & -14 
\end{matrix}
\right),\,\,
B=\left(\begin{matrix}
-192 & 304 \\
-144 & 248 
\end{matrix}
\right).
$$
Here $A$ has eigenvalues $20,54$ and $B$ has eigenvalues $-40,96$. Let
\begin{align*}
\la_1&=20=2^2\cdot 5,\\
\la_2&=54=2\cdot 3^3,\\
\mu_1&=-40=-2^3\cdot 5,\\
\mu_2&=96=2^5\cdot 3.
\end{align*}
Thus, 
\begin{eqnarray*}
\cP&=&\cP(A)=\cP(B)=\{2,3,5\}, \\
\cP'&=&\cP'(A)=\cP'(B)=\{3,5\},\\
t_3&=&t_3(A)=t_3(B)=1,\\
t_5&=&t_5(A)=t_5(B)=1,\\
\caR&=&\caR(A)=\caR(B)=\left\{n2^k3^l5^m\,\,\vert\,\,k,l,m,n\in\bZ\right\}.
\end{eqnarray*}
We have
$$
A=S\left(\begin{matrix}
\la_1 & 0 \\
0 & \la_2 
\end{matrix}
\right)S^{-1},\quad S=\left(\begin{matrix}
1 & 2 \\
1 & 1 
\end{matrix}
\right)=
\left(\begin{matrix}
{\bf u}_1 & {\bf u}_2
\end{matrix}
\right)
\in\operatorname{GL}_2(\bZ).
$$
Thus, in the notation of Lemma \ref{l:repr'}, 
$W_5=W_5(A)=S$, $W_3=W_3(A)=\left(\begin{matrix}
{\bf u}_2 & {\bf u}_1
\end{matrix}
\right)$, and
$$
G_A=<{\bf u}_1,{\bf u}_2,2^{-\infty}{\bf u}_1, 2^{-\infty}{\bf u}_2,
5^{-\infty}{\bf u}_1, 3^{-\infty}{\bf u}_2>.
$$
Also,
$$
G_A=<{\bf e}_1,{\bf e}_2,2^{-\infty}{\bf e}_1, 2^{-\infty}{\bf e}_2,
5^{-\infty}({\bf e}_1+{\bf e}_2), 3^{-\infty}({\bf e}_1+2^{-1}{\bf e}_2)>,
$$
since $2\in\bZ_3^{\times}$.
Thus,
$$
M(A;{\bf e}_1, {\bf e}_2)=\{\al_{512}(A)=1, \al_{312}(A)=2^{-1}\}
$$ is the characteristic of $G_A$ with respect to $\{{\bf e}_1, {\bf e}_2\}$.
Similarly, we find a characteristic of $G_B$. 
One can show that
$$
B=P\left(\begin{matrix}
\mu_1 & 0 \\
0 & \mu_2 
\end{matrix}
\right)P^{-1},\quad P=\left(\begin{matrix}
2 & 19 \\
1 & 18 
\end{matrix}
\right)=\left(\begin{matrix}
{\bf v}_1 & {\bf v}_2
\end{matrix}
\right)\in\operatorname{M}_2(\bZ).
$$
Note that $\det P=17\in\bZ_p^{\times}$ for any $p\in\cP'=\{3,5\}$. Thus, in the notation of Lemma \ref{l:repr'}, 
$W_5=W_5(B)=P$, $W_3=W_3(B)=\left(\begin{matrix}
{\bf v}_2 & {\bf v}_1
\end{matrix}
\right)$, and
\begin{eqnarray*}
G_B&=&<{\bf e}_1,{\bf e}_2,2^{-\infty}{\bf e}_1, 2^{-\infty}{\bf e}_2,
5^{-\infty}{\bf v}_1, 3^{-\infty}{\bf v}_2>= \\
&=&<{\bf e}_1,{\bf e}_2,2^{-\infty}{\bf e}_1, 2^{-\infty}{\bf e}_2,
5^{-\infty}({\bf e}_1+2^{-1}{\bf e}_2), 3^{-\infty}({\bf e}_1+\frac{18}{19}{\bf e}_2)>,
\end{eqnarray*}
since $2\in\bZ_5^{\times}$, $19\in\bZ_3^{\times}$.
Thus,
$$
M(B;{\bf e}_1, {\bf e}_2)=\left\{\al_{512}(B)=2^{-1}, \al_{312}(B)=\frac{18}{19}\right\}
$$ 
is the characteristic of $G_B$ with respect to 
$\{{\bf e}_1,{\bf e}_2\}$. Using Theorem \ref{th:main1}, one can show that $G_A$ is not isomorphic to $G_B$. Namely, one can show that if $T\in\GL_2(\bQ)$ and
$T({\bf u}_i)=m_i{\bf v}_i$, $i=1,2$, for some $m_1,m_2\in\bQ$, then $T\not\in\GL_2(\caR)$. 
\end{example}

\begin{example}\label{ex:rat2}
Let 
$$
C=\left(\begin{matrix}
87 & -67 \\
33 & -13
\end{matrix}
\right),\,\,
B=\left(\begin{matrix}
-192 & 304 \\
-144 & 248 
\end{matrix}
\right),
$$
where $C$ has eigenvalues $\la_1=20,\la_2=54$, and $B$ is the same as in Example \ref{ex:rat1}. We claim that $G_C\cong G_B$.
Indeed, 
$$
C=S\left(\begin{matrix}
\la_1 & 0 \\
0 & \la_2 
\end{matrix}
\right)S^{-1},\quad S=\left(\begin{matrix}
1 & -67 \\
1 & -33 
\end{matrix}
\right)=\left(\begin{matrix}
{\bf w}_1 & {\bf w}_2
\end{matrix}
\right)\in\operatorname{M}_2(\bZ).
$$
We have $\cP=\{2,3,5\}$, $\cP'=\{3,5\}$, $t_3=1$, $t_5=1$. Since $\det S=34\in\bZ_p^{\times}$ for any $p\in\cP'$, by Lemma \ref{l:repr'}, 
$W_5=W_5(C)=S$, $W_3=W_3(C)=\left(\begin{matrix}
{\bf w}_2 & {\bf w}_1
\end{matrix}
\right)$, and
\begin{eqnarray*}
G_C&=&<{\bf e}_1,{\bf e}_2,2^{-\infty}{\bf e}_1, 2^{-\infty}{\bf e}_2,
5^{-\infty}{\bf w}_1, 3^{-\infty}{\bf w}_2>=\\
&=&<{\bf e}_1,{\bf e}_2,2^{-\infty}{\bf e}_1, 2^{-\infty}{\bf e}_2,
5^{-\infty}({\bf e}_1+{\bf e}_2), 3^{-\infty}({\bf e}_1+\frac{33}{67}{\bf e}_2)>,
\end{eqnarray*}
since $67\in\bZ_3^{\times}$.
Thus, 
$$
M(C;{\bf e}_1, {\bf e}_2)=\left\{\al_{512}(C)=1, \al_{312}(C)=\frac{33}{67}\right\}
$$ 
is the characteristic of $G_C$ with respect to 
$\{{\bf e}_1, {\bf e}_2\}$. Using Theorem \ref{th:main1}, one can find $T\in\GL_2(\caR)$ such that 
$T({\bf v}_i)=m_i{\bf w}_i$, $m_i\in\bQ$, $i=1,2$. For example,
$$
T=\left(\begin{matrix}
5 & -9 \\
3 & -5
\end{matrix}
\right),\quad\det T=2\in\caR^{\times},
$$
the conditions in Theorem \ref{th:main1} are satisfied and, hence, $T:G_B\rar G_C$ is an isomorphism.
\end{example}

There are several examples in \cite{s} when $n=2$ and characteristic polynomials are irreducible. We now look at higher-dimensional examples.

\begin{example}\label{ex:aaa} 
In this and the next examples we show two ways to compute characteristics.
 Let $n=3$, $h=t^3+t^2+2t+6$, and 
$$
A=\left(\begin{matrix}
0 & 0 & -6 \\
1 & 0 & -2 \\
0 & 1 & -1 
\end{matrix}
\right),
$$
a rational canonical form of $h$.
Note that $h\in\bZ[t]$ is irreducible in $\bQ[t]$. We will compute 
the characteristic of $G_A$ with respect to the standard basis $\{{\bf e}_1,{\bf e}_2,{\bf e}_3\}$. The calculation is justified by the proof of Theorem \ref{th:trig}.

\sbr

We have $\det A=-6$,  
$\cP=\cP'=\{2,3\}$. 
Let $p=2$. Then
$$
h\equiv t^2\cdot (t+1) \,(\text{mod }2),\quad \overline{G}_{A,p}\cong \bQ^2_p\oplus\bZ_p,\quad t_p=2,
$$
by Proposition \ref{prop:1} above. As follows from the proof of Lemma \ref{lem:gen-n},
to determine a characteristic of $G_A$, we need to find generators of the divisible part $D_p(A)$ of $\overline{G}_{A,p}$, {\it i.e.}, a
$\bZ_p$-submodule of $\overline{G}_{A,p}$ isomorphic to $\bQ_p^2$. By Hensel's lemma,  
$h=(t-\la)g(t)$, where $\la\in\bZ_p^{\times}$ and $g\in\bZ_p[t]$ is of degree $2$. One can show that $g$ is irreducible over $\bQ_p$. Let $\al\in\overline{\bQ}_p$ be a root of $g$. Let ${\bf u}(\al)\in\bZ_p[\al]^3$ denote an eigenvector
of $A$ corresponding to $\al$. We can take
$$
{\bf u}(\al)=\left(\begin{matrix}
 -6 \\
\al(\al+1) \\
\al 
\end{matrix}
\right)=C\left(\begin{matrix}
 1 \\
\al
\end{matrix}
\right),\quad
C=\left(\begin{matrix}
 -6 & 0 \\
6\la^{-1} & -\la \\
0 & 1 
\end{matrix}
\right)\in\M_{3\times 2}(\bZ_p).
$$
We then look for a Smith normal form of $C$:
$$
C=U\left(\begin{matrix}
 -6 & 0 \\
0 & -\la \\
0 & 0 
\end{matrix}
\right),\quad
U=\left(\begin{matrix}
 1 & 0 & 0 \\
-\la^{-1} &1 & 0 \\
0 & -\la^{-1} & 1
\end{matrix}
\right)\in\GL_3(\bZ_p).
$$
The first two columns ${\bf u}_{21}$, ${\bf u}_{22}$ of $U$ give us generators of 
$D_p(A)$:
$$
{\bf u}_{21}=\left(\begin{matrix}
 1  \\
-\la^{-1} \\
0 \end{matrix}
\right),\quad
{\bf u}_{22}=\left(\begin{matrix}
  0 \\
1 \\
-\la^{-1}
\end{matrix}
\right).
$$
Analogously, for $p=3$ we have
$$
h\equiv t\cdot (t^2+t+2) \,(\text{mod }3),\quad
\overline{G}_{A,p}\cong \bQ_p\oplus\bZ_p^2,\quad t_p=1,
$$
by Proposition \ref{prop:1} above. By Hensel's lemma, $h$ has a root $\ga\in p\bZ_p$.  As a generator of $D_p(A)$, we can take an eigenvector 
${\bf u}_{31}={\bf u}(\ga)$ of $A$ corresponding to  
$\ga$. 
By Lemma \ref{l:repr'},
$$
G_A=<{\bf e}_1,{\bf e}_2,2^{-\infty}{\bf u}_{21},2^{-\infty}{\bf u}_{22}, 
3^{-\infty}{\bf u}_{31}>.
$$
We now change the system $\{{\bf u}_{ij}\}$ so that it has the form \eqref{eq:form} with respect to $\{{\bf e}_1, {\bf e}_2,{\bf e}_3\}$. 
For ${\bf x}_{21}={\bf u}_{21}+\la^{-1}{\bf u}_{22}$, ${\bf x}_{22}={\bf u}_{22}$, ${\bf x}_{31}=(-1/6){\bf u}_{31}$, we have
\begin{eqnarray*}
{\bf x}_{21}&=&{\bf e}_1-\la^{-2}{\bf e}_3,\quad p=2,\\
{\bf x}_{22}&=&{\bf e}_2-\la^{-1}{\bf e}_3,\quad p=2, \\
{\bf x}_{31}&=&{\bf e}_1-(1/2)(\ga/3)(\ga+1){\bf e}_2-(1/2)(\ga/3){\bf e}_3, \quad p=3.
\end{eqnarray*}
Note that in ${\bf x}_{31}$, $2$ is a unit in $\bZ_3$ and $3$ divides $\ga$ in $\bZ_3$, so that $1/2,\ga/3\in\bZ_3$. Therefore,
$$
M(A;{\bf e}_1, {\bf e}_2,{\bf e}_3)=\{\al_{213},\al_{223}, \al_{312},\al_{313}\},
$$ 
where
\begin{alignat}{2}
\al_{213} & = -\la^{-2}, & \quad  \al_{312}&=-(1/2)(\ga/3)(\ga+1), \nonumber \\
\al_{223}& = -\la^{-1}, & \quad  \al_{313}&=-(1/2)(\ga/3)\nonumber 
\end{alignat}
is the characteristic of $G_A$ with respect to 
$\{{\bf e}_1, {\bf e}_2,{\bf e}_3\}$. 
\end{example}

\begin{example}\label{ex:aaat}
In this example we show another way to calculate a characteristic. We use Remark \ref{r:char} above that a characteristic can be calculated over an extension 
of $\bQ_p$ for each prime $p$. We find a characteristic of $G_{A^t}$, where $A$ is from Example \ref{ex:aaa} and $A^t$ is the transpose of $A$. Note that if $\de$ is an eigenvalue of $A$, then
${\bf v}(\de)=\left(\begin{matrix}
 1 & \de & \de^2 
\end{matrix}
\right)^t$
is an eigenvector of $A^t$ corresponding to $\de$. We use the notation of  Example \ref{ex:aaa}. For $p=2$, let 
$\al_1,\al_2\in\overline{\bQ}_p$ be (distinct) roots of $g$. By Lemma \ref{l:div}, 
${\bf v}(\al_1),{\bf v}(\al_2)$ are generators of the divisible part of $\overline{G}_{A,p}$ over the ring of integers of a finite extension of $\bQ_p$ that contains $\al_1,\al_2$. We now change $\{{\bf v}(\al_1),{\bf v}(\al_2)\}$ so that it has the form \eqref{eq:form}. Namely, let
\begin{eqnarray*}
{\bf v}_{22}&=&\frac{1}{\al_2-\al_1}({\bf v}(\al_2)-{\bf v}(\al_1))=
\left(\begin{matrix}
 0 & 1 & \al_1+\al_2 
\end{matrix}
\right)^t, \\
{\bf v}_{21}&=&{\bf v}(\al_1)-\al_1{\bf v}_{22}=
\left(\begin{matrix}
 1 & 0 & -\al_1\al_2 
\end{matrix}
\right)^t.
\end{eqnarray*}
Since $\al_1,\al_2,\la$ are roots of $h$ and $h=t^3+t^2+2t+6$, we have
 $\al_1+\al_2+\la=-1$ and $\al_1\al_2\la=-6$. Recall $\la\in\bZ_p^{\times}$. 
Hence,
\begin{eqnarray*}
{\bf v}_{21}&=&{\bf e}_1+6\la^{-1}{\bf e}_3,\\
{\bf v}_{22}&=&{\bf e}_2-(\la+1){\bf e}_3, \\
{\bf v}_{31}&=&{\bf v}(\ga)={\bf e}_1+\ga{\bf e}_2+\ga^2{\bf e}_3.
\end{eqnarray*}
Therefore,
$
M(A^t;{\bf e}_1, {\bf e}_2,{\bf e}_3)=\{\al'_{213},\al'_{223}, \al'_{312},\al'_{313}\},
$ 
where
\begin{alignat}{2}
\al'_{213} & = 6\la^{-1}, & \quad  \al'_{312}&=\ga, \nonumber \\
\al'_{223}& = -(\la+1), & \quad  \al'_{313}&=\ga^2\nonumber 
\end{alignat}
is the characteristic of $G_{A^t}$ with respect to the basis 
$\{{\bf e}_1, {\bf e}_2,{\bf e}_3\}$. 
\end{example}

\begin{example} 
Using Examples \ref{ex:aaa} and \ref{ex:aaat}, we show 
$G_A\cong G_{A^t}$.
Let $$
A=\left(\begin{matrix}
0 & 0 & -6 \\
1 & 0 & -2 \\
0 & 1 & -1 
\end{matrix}
\right),\quad
{\bf u}=\left(\begin{matrix}
 -6 \\
\de(\de+1) \\
\de 
\end{matrix}
\right),\quad 
{\bf v}=\left(\begin{matrix}
 1 \\
\de \\
\de^2 
\end{matrix}
\right),
$$
where ${\bf u}$, ${\bf v}$
are eigenvectors of $A$, $A^t$, respectively, corresponding to an eigenvalue $\de$. 
Thus,
\begin{eqnarray*}
\caR&=&\caR(A)=\caR(A^t)=\left\{n2^k3^l\,\,\vert\,\,n,k,l\in\bZ\right\},\\
I_{\caR}(A,\de)&=&\Span_{\caR}(-6,\de,\de(\de+1))=
\Span_{\caR}(1,\de,\de^2),
\end{eqnarray*}
since $6\in\caR^{\times}$, and 
$$
I_{\caR}(A^t,\de)=
\Span_{\caR}(1,\de,\de^2)=I_{\caR}(A,\de).
$$
We obtain $T\in\GL_3(\caR)$ by expressing coordinates of 
${\bf u}$ in terms of coordinates of ${\bf v}$:
$$
T=\left(\begin{matrix}
-6 & 0 & 0 \\
0 &1& 1 \\
0 & 1 & 0
\end{matrix}
\right),\quad
T^{-1}=\left(\begin{matrix}
-1/6 & 0 & 0 \\
0 &0 & 1 \\
0 & 1 & -1
\end{matrix}
\right).
$$
Note that we were able to compute the characteristics of  both $G_A$, $G_{A^t}$ with respect to the standard basis, without having to change the basis (or, equivalently, conjugate $A$, $A^t$ by matrices in $\GL_3(\bZ)$). 
Therefore, $T$ $($resp., $T^{-1})$ satisfies the condition $(A,B,p)$  
$($resp., $(B,A,p))$ for any $p\in\cP'$, since $2$nd and $3$rd columns of both $T$, $T^{-1}$ consist of integers.
Since $T\in\GL_3(\caR)$, characteristics of both $A$, $A^t$ are with respect to the standard basis, and $A$, $A^t$ share the same eigenvalues, by Proposition \ref{pr:main3iff}, $T:G_{A^t}\rar G_{A}$ is an isomorphism. 
\end{example}

\begin{example}\label{ex:5}
Assume $A,B\in\M_n(\bZ)$ have irreducible characteristic polynomials.
By Proposition \ref{pr:main3iff}, if
$G_A\cong G_B$, then $[I_{\caR}(A,\la)]=[I_{\caR}(B,\mu)]$  under some additional conditions on $A$. In this example we show that this is not true in general. More precisely, 
$A,B\in\M_4(\bZ)$ share the same irreducible characteristic polynomial, $G_A\cong G_B$, but
$[I_{\caR}(A,\la)]\ne [I_{\caR}(B,\mu)]$. In particular, it shows that even when the characteristic polynomials of $A$, $B$ are irreducible and $G_A\cong G_B$, not every isomorphism between $G_A$ and $G_B$ takes an eigenvector of $A$ to an eigenvector of $B$ (unlike {\em e.g.}, the case of a prime dimension $n$). 
Here $n=4$ and $t_p=2$, so that the condition $(t_p,n)=1$ in Proposition \ref{pr:main3iff} does not hold. 

\sbr

Let $h(t)=t^4-2t^3+21t^2-20t+5$, irreducible over $\bQ$, and let $\la\in\overline{\bQ}$ be a root of $h$. 
By \cite{db}, $\cO_K=\bZ[\la]$, $K$ is Galois over $\bQ$, $\Gal(K/\bQ)\cong(\bZ/2\bZ)^2$, and the ideal class group of $K$ is non-trivial. 
Thus, there exists an ideal $J_1$ of $\bZ[\la]$ such that its ideal class $[J_1]$ is not trivial, {\it i.e.}, 
there is no $x\in K$ such that $J_1=x\bZ[\la]$. By \cite{sage}, we can take $J_1$ to be the ideal of $\bZ[\la]$ generated by $7$ and $\la^3-\la^2+20\la-4$ over $\bZ[\la]$, denoted by $J_1=(7,\la^3-\la^2+20\la-4)$. One can also find a $\bZ$-basis of $J_1$, {\it e.g.},
$J_1=\bZ[\om_1,\om_2,\om_3,\om_4]$, where
\begin{eqnarray*}
\om_1&=&7,\\
\om_2&=&2\la^3-3\la^2+41\la-16, \\
\om_3&=&\la^3-\la^2+20\la-4, \\
\om_4&=&-2\la^3+3\la^2-40\la+25.
\end{eqnarray*}
Since $[J_1]$ is non-trivial,
by Latimer--MacDuffee--Taussky Theorem \cite{t}, matrices $A$, $B$ corresponding to $(1)=\bZ[\la]$
and $J_1$, respectively, are not conjugated by a matrix from $\GL_4(\bZ)$. We find 
$A$, $B$ from the condition that
$$
{\bf u}=\left(\begin{matrix}
1 & \la & \la^2 & \la^3
\end{matrix}
\right)^t,\quad
{\bf v}=\left(\begin{matrix}
\om_1 & \om_2 & \om_3 & \om_4
\end{matrix}
\right)^t
$$
are eigenvectors of $A$, $B$, respectively, corresponding to $\la$. Thus,  
$$
A=\left(\begin{matrix}
0 & 1 & 0 & 0 \\
0 & 0 & 1 & 0 \\
0 & 0 & 0 & 1 \\
-5 & 20 & -21 & 2
\end{matrix}
\right),\quad
B=\left(\begin{matrix}
-9 & 7 & 0 & 7 \\
-6 & 4 & 1 & 4 \\
5 & -4 & 1 & -4 \\
-8 & 5 & 1 & 6
\end{matrix}
\right).
$$
Both $A$, $B$ have characteristic polynomial $h(t)=t^4-2t^3+21t^2-20t+5$, $\det A=\det B=5$, $\cP=\cP'=\{5\}$, $t_5=2$, and $[I_{\bZ}(A,\la)]\ne [I_{\bZ}(B,\la)]$. 
We show $[I_{\caR}(A,\la)]\ne[I_{\caR}(B,\la)]$, where $\caR=\{\frac{m}{5^k}\,\vert\,m,k\in\bZ \}$. Equivalently, we show that there is no $x\in K$ such that 
\bbe\label{eq:contr}
x(I_{\bZ}(A,\la)\otimes_{\bZ}\caR)=I_{\bZ}(B,\la)\otimes_{\bZ}\caR, 
\ee
where
\begin{eqnarray*}
I_{\bZ}(A,\la)&=&\bZ[1,\la,\la^2,\la^3]=(1), \\
I_{\bZ}(B,\la)&=&\bZ[\om_1,\om_2,\om_2,\om_3]=J_1.
\end{eqnarray*}
We also demonstrate how the standard methods of working with fractional ideals of $\cO_K$ (such as the prime ideal factorization and divisibility properties) can be used in the case of the ring $\caR$. This suggests the practicality of using generalized ideal classes. Assume there exists 
$x\in K$ satisfying \eqref{eq:contr}. Then $5^kx\in J_1$ for some $k\in\bN\cup\{0\}$. In particular, $y=5^kx\in\bZ[\la]$. Then $y\in J_1$ implies that 
$J_1$ divides the ideal $(y)=y\bZ[\la]$ of $\bZ[\la]$ generated by $y$, {\it i.e.}, 
$(y)=J_1\fA$ for an (integral) ideal $\fA\subseteq\bZ[\la]$ of $\bZ[\la]$. Note that 
$\fA$ is not principal ({\it i.e.}, $\fA\ne x\bZ[\la]$ for any $x\in K$), since the class of $J_1$ is non-trivial. Analogously, \eqref{eq:contr} implies $5^tJ_1\subseteq (y)$ and hence $5^tJ_1=(y)\fA'$ 
for an (integral) ideal $\fA'\subseteq\bZ[\la]$ of $\bZ[\la]$. Combining the two equalities, we get
$$
5^tJ_1=(y)\fA'=J_1\fA\fA'.
$$
Cancelling $J_1$, this implies $(5^t)=\fA\fA'$. Using \cite{sage}, we can check that 
all the prime ideal divisors of the ideal $(5)$ are principal, hence $\fA$ is principal and so is $J_1$, which is a contradiction. This shows $[I_{\caR}(A,\la)]\ne[I_{\caR}(B,\la)]$. Nonetheless, we show next that $G_A\cong G_B$.

\sbr

By \cite{sage}, $(5)=\fp_1^2\fp_2^2$, where $\fp_1,\fp_2$ are prime ideals of 
$\bZ[\la]$, $\fp_1=(\la)$, and there exists $g\in \Gal(K/\bQ)$ of order 
$2$ such that 
$g(\fp_i)=\fp_i$, $i=1,2$. In the notation of Theorem \ref{th:prelim0}, 
$X_{A,\fp_1}=\Span_{K}({\bf u},g({\bf u}))$, 
$X_{B,\fp_1}=\Span_{K}({\bf v},g({\bf v}))$. We look for $f_1,f_2\in K$ such that
$f_1{\bf v}+f_2g({\bf v})\in\caR[\la]$. Using the action of $g$, the condition is equivalent to
the existence of $T\in\GL_4(\caR)$ with $T(X_{A,\fp_1})=X_{B,\fp_1}$, namely,
$f_1{\bf v}+f_2g({\bf v})=T({\bf u})$. Note that any element in $K$ can be written as $\bQ$-linear combination of $1,\la,\la^2,\la^3$, since $K=\bQ(\la)$ of degree $4$ over $\bQ$.  In other words, for any $f_1,f_2\in K$ there is 
$L\in\GL_4(\bQ)$ such that $f_1{\bf v}+f_2g({\bf v})=L({\bf u})$. The goal is to find 
$f_1,f_2\in K$ so that both $L$, $L^{-1}$ have coefficients in $\caR$, {\it i.e.}, the denominators of coefficients of both $L$, $L^{-1}$ are powers of $5$. It turns out that such $f_1,f_2$ exist, namely,
\begin{eqnarray*}
f_1&=&\frac{39}{350}\la^3-\frac{29}{175}\la^2+\frac{739}{350}\la-\frac{5}{14},\\
f_2&=&\frac{61}{350}\la^3-\frac{46}{175}\la^2+\frac{1261}{350}\la-\frac{27}{14},
\end{eqnarray*}
and $f_1{\bf v}+f_2g({\bf v})=T({\bf u})$ with
$$
T=\left(\begin{matrix}
-21 & 40 & -3 & 2 \\
-\frac{72}{5} & \frac{141}{5} & -\frac{11}{5} & \frac{7}{5} \\
0 & 1 & 0 & 0\\
-20 & 40 & -3 & 2
\end{matrix}
\right),\quad \det T=-\frac{1}{5},\quad T\in\GL_4(\caR).
$$
We use Theorem \ref{th:prelim0} to show that $T$ is an isomorphism from $G_A$ to $G_B$, {\it i.e.}, $T(G_A)=G_B$. 
First, we find characteristics of $G_A$, $G_B$. We apply the process described in the proof of  Lemma \ref{l:vect} to vectors ${\bf u}, g({\bf u})$. We have
$$
{\bf u}=\left(\begin{matrix}
1 & \la & \la^2 & \la^3
\end{matrix}\right)^t,\quad 
g({\bf u})=\left(\begin{matrix}
1 & g(\la) & g(\la^2) & g(\la^3)
\end{matrix}\right)^t,
$$
where 
\begin{eqnarray*}
g(\la)&=&-4\la^3+6\la^2-81\la+40,\\
g(\la^2)&=&-4\la^3+5\la^2-80\la+20,\\
g(\la^3)&=&75\la^3-114\la^2+1520\la-770.
\end{eqnarray*}
Applying column operations on $\left(\begin{matrix} {\bf u} & g({\bf u}) \end{matrix}\right)$
corresponding to multiplications by matrices from $\GL_4(\bZ_5)$, we arrive at
$$
\left(\begin{matrix}
1 & 0 & -\de & -2\de+10\\
0 & 1 & 2\de+40 & 3\de+40
\end{matrix}\right)^t,\quad \de=-2\la^3+3\la^2-40\la+20.
$$
Therefore, 
$$
M(A;{\bf e}_1,\ldots,{\bf e}_4)=\{\al_{513}(A),\al_{514}(A),\al_{523}(A), \al_{524}(A)\},
$$ 
where
\begin{alignat}{2}
\al_{513}(A) & = -\de, & \quad  \al_{514}(A)&=-2\de+10, \nonumber \\
\al_{523}(A)& = 2\de+40, & \quad  \al_{524}(A)&=3\de+40.\nonumber 
\end{alignat}
Note that $K_{\fp_1}$ is an extension of $\bQ_5$ of degree $2$
and $\Gal(K_{\fp_1}/\bQ_5)$ is generated by $g$. Since $\de\in\bZ[\la]$, under an embedding 
$K\hrar K_{\fp_1}$, $\de$ becomes an element of the ring of integers of $K_{\fp_1}$. Since
 $\de=\la\cdot g(\la)$, $\de$ is an integral element of $\bQ_5$ and therefore, $\de\in\bZ_5$.
 Therefore, all the elements $\al_{5ij}(A)$ in $M(A;{\bf e}_1,\ldots,{\bf e}_4)$ belong to $\bZ_5$. To find a characteristic of $G_B$, we repeat the above process for vectors
 ${\bf v}, g({\bf v})$. We arrive at
$$
\left(\begin{matrix}
1 & 0 & \frac{1}{7}(1-4\de) & \frac{1}{7}(\de+5) \\
0 & 1 & \de & 0 
\end{matrix}\right)^t,\quad \de=-2\la^3+3\la^2-40\la+20,
$$
and
$$
M(B;{\bf e}_1,\ldots,{\bf e}_4)=\{\al_{513}(B),\al_{514}(B),\al_{523}(B), \al_{524}(B)\},
$$ 
where
\begin{alignat}{2}
\al_{513}(B)& = \frac{1}{7}(1-4\de), & \quad  \al_{514}(B)&=\frac{1}{7}(\de+5), \nonumber \\
\al_{523}(B)& = \de, & \quad  \al_{524}(B)&=0.\nonumber 
\end{alignat}
Note that all $\al_{5ij}(B)\in\bZ_5$. 
We can now check the condition $(A,B,5)$ for $T$ in Theorem \ref{th:prelim0}.
It holds, because $\al(B)_{523}=\de$, 
$\al(B)_{524}=0$ are both divisible by $5$ in $\bZ_5$ (by the choice of $\fp_1$, $\la$ is divisible by $\fp_1$ in $\cO_{\fp_1}$).
Since $T^{-1}$ has integer coefficients, the condition $(B,A,5)$ holds automatically. In Theorem \ref{th:prelim0}, the conditions 
\begin{alignat}{2}
\cP(A)&=\cP(B)=\{5\}, & \quad  & \caR=\caR(A)=\caR(B), \nonumber \\
\cP'(A)&=\cP'(B)=\{5\}, & \quad  & t_5(A)=t_5(B)=2 \nonumber 
\end{alignat}
hold automatically, since $A$, $B$ share the same eigenvalues. Also, $\Gal(K/\bQ)$ acts transitively on the prime ideals $\fp_1,\fp_2$ above $5$, so there exists 
$g'\in\Gal(K/\bQ)$ such that $g'(\fp_1)=\fp_2$. 
By above, $T(X_{A,\fp_1})=X_{B,\fp_1}$, $T\in\GL_4(\caR)$, and applying $g'$, we get
$T(X_{A,\fp_2})=X_{B,\fp_2}$. By Theorem \ref{th:prelim0}, $G_A\cong G_B$, but
$[I_{\caR}(A,\la)]\ne [I_{\caR}(B,\la)]$, even though the characteristic polynomials of $A$, $B$ are irreducible over $\bQ$.

\begin{example}\label{ex:6} 
The motivation behind this example is the following question. Assume the characteristic polynomial of $A$ is irreducible and $G_A\cong G_B$. Is 
necessarily the characteristic polynomial of $B$ also irreducible? This is true for $n=2$ (see \cite[Remark 4.2]{s}) and it turns out that this is not true for an arbitrary $n$. In our example, $n=4$, 
$A,C\in\M_4(\bZ)$ have the same irreducible characteristic polynomial $h(t)=t^4+t^2+9$, and $G_A=G_C$. Let $B=C^2$. Then the minimal polynomial of $B$ is
$t^2+t+9$, so that the characteristic polynomial of $B$ is $(t^2+t+9)^2$, not irreducible. However, 
$G_B=G_C=G_A$. More precisely,
$$
A=\left(\begin{matrix}
0 & 1 & 0 & 0 \\
0 & 0 & 1 & 0 \\
0 & 0 & 0 & 1 \\
-9 & 0 & -1 & 0
\end{matrix}
\right),\quad
C=\left(\begin{matrix}
0 & 1 & -1 & 0 \\
9 & 0 & 2 & 1 \\
9 & 0 & 1 & 1 \\
-18 & -9 & 7 & -1
\end{matrix}
\right),
$$
where $\det A=\det C=9$, $\cP=\cP'=\{3\}$, and $t_3=2$. By Hensel's lemma, there exists
a root $\la\in\overline{\bQ}$ of $h$ such that $\la\in\bZ_3^{\times}$ under 
$\bQ(\la)\hrar \bQ(\la)_{\fp}$, where $\fp$ is a prime ideal of the ring of integers of
$\bQ(\la)$  above $3$.
 One can show that
$$
G_A=G_C=<{\bf e}_1,\ldots,{\bf e}_4,3^{-\infty}({\bf e}_1+\la^2{\bf e}_3),3^{-\infty}({\bf e}_2+\la^2{\bf e}_4)>.
$$
(For example, we can apply the process described in the proof of  Lemma \ref{l:vect} to eigenvectors
$$
{\bf u}_i=\left(\begin{matrix}
1 & \pm\la & \la^2 & \pm\la^3
\end{matrix}
\right)^t,\quad
{\bf v}_i=\left(\begin{matrix}
1 & \pm\la+\la^2 & \la^2 & \pm\la^3-\la^2-9
\end{matrix}
\right)^t,\,\,i=1,2,
$$
of $A$, $C$, respectively, corresponding to $\pm\la$.) Thus, $G_B=G_C=G_A$, the characteristic polynomial of $A$ is irreducible, and the characteristic polynomial of 
$B$ is not irreducible.
\end{example}

\end{example}

\section{applications}

\subsection{$\bZ^n$-odometers} In this section we generalize our results in \cite{s} on application of groups $G_A$ to 
$\bZ^2$-odometers to the $n$-dimensional case. By definition, a
$\bZ^n$-odometer is a dynamical system 
consisting of a topological space $X$ and an action of the group $\bZ^n$ on $X$ (by homeomorphisms). There is a way to construct a 
$\bZ^n$-odometer out of a  subgroup $H$ of $\bQ^n$ that contains $\bZ^n$ \cite[p.\ 914]{gps}.  
Namely, the associated odometer $Y_H$ is the Pontryagin dual of the quotient $H/\bZ^n$, {\it i.e.,} $Y_H=\widehat{H/\bZ^n}$. The action of $\bZ^n$ on $Y_H$ is given as follows.  
Let $\rho$ denote the embedding
$$
\rho:H/\bZ^n\hrar \bQ^n/\bZ^n\hrar \bT^n,\quad \bT^n=\bR^n/\bZ^n.
$$
Identifying Pontryagin dual $\widehat{\bT^n}$ of $\bT^n$ with $\bZ^n$, we have the induced map
$$
\widehat\rho:\bZ^n \rar Y_H=\widehat{H/\bZ^n}.
$$
The action of $\bZ^n$ on $Y_H$ is given by $\widehat\rho$. 
Let $A\in\M_n(\bZ)$ be non-singular. Applying the process to the group $H=G_A$, we get the associated $\bZ^n$-odometer $Y_{G_A}$. For simplicity, we denote $Y_{G_A}$ by $Y_{A}$. 

%

\sbr

In the next lemma we analyze when $G_A$ is dense in $\bQ^n$. The result generalizes the case $n=2$ \cite[Lemma 8.4]{s}. Let $A\in\M_n(\bZ)$ be non-singular
and let $h_A\in\bZ[t]$ be the characteristic polynomial of $A$. 
Let $h_A=h_1h_2\cdots h_s$,
where $h_1,\ldots,h_s\in\bZ[t]$ are irreducible of degrees $n_1,\ldots,n_s$, respectively.
By
Theorem \ref{th:trig} below, there exists $S\in\GL_n(\bZ)$ such that
\bbe\label{eq:kkkk1}
SAS^{-1}=\left(\begin{matrix}
A_{1} & *  & \cdots & * \\
0 & A_{2}  & \cdots & * \\
\vdots & \vdots &  &\vdots \\
0 & 0  & \cdots & A_{s}
\end{matrix}
\right),
\ee
where each $A_i\in\M_{n_i}(\bZ)$ has characteristic polynomial $h_i$, $i\in\{1,2,\ldots,s\}$.

\begin{lem}\label{l:dense}
$G_A$ is dense in $\bQ^n$ if and only if $A_i\not\in\GL_{n_i}(\bZ)$ for all $i\in\{1,2,\ldots,s\}$. Equivalently, $G_A$ is dense in $\bQ^n$ if and only if $\det A_i\ne\pm 1$ for all $i\in\{1,2,\ldots,s\}$ if and only if $h_i(0)\ne\pm 1$ for all $i\in\{1,2,\ldots,s\}$.
\end{lem}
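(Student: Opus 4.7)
The plan is to reformulate density of $G_A$ as triviality of an auxiliary $A$-stable lattice $\Lambda \subseteq \bZ^n$, and then to read off number-theoretic information about $\Lambda$ from the factorization $h_A = h_1 \cdots h_s$. First, the equivalence of the three stated conditions on the $A_i$ is straightforward: since each $A_i \in \M_{n_i}(\bZ)$ is non-singular, $A_i \in \GL_{n_i}(\bZ)$ if and only if $\det A_i = \pm 1$, and $h_i(0) = (-1)^{n_i}\det A_i$ because $h_i$ is the characteristic polynomial of $A_i$. Moreover, conjugation by any $S \in \GL_n(\bZ)$ is a linear homeomorphism of $\bR^n$ with $S(G_A) = G_{SAS^{-1}}$, so density is invariant under such conjugation, and Theorem \ref{th:trig} lets me assume $A$ is already in the block upper triangular form \eqref{eq:kkkk1}.

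Since $\bZ^n \subseteq G_A \subseteq \bQ^n$, density of $G_A$ in $\bQ^n$ is equivalent to density of $G_A/\bZ^n$ in $\bT^n := \bR^n/\bZ^n$, which by Pontryagin duality is equivalent to triviality of the annihilator of $G_A/\bZ^n$ in $\widehat{\bT^n} \cong \bZ^n$. Writing $G_A = \bigcup_{k\geq 0} A^{-k}\bZ^n$ and viewing elements of $\bZ^n$ as row vectors, a direct computation shows that $w \in \bZ^n$ annihilates $G_A/\bZ^n$ if and only if $wA^{-k} \in \bZ^n$ for every $k \geq 0$, equivalently $w \in \Lambda := \bigcap_{k \geq 0}\bZ^n A^k$. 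One checks easily that $\Lambda A = \Lambda$, so $A$ restricts to an automorphism of the finitely generated free $\bZ$-module $\Lambda$, giving $\det(A|_\Lambda) = \pm 1$; moreover $V := \Lambda \otimes_\bZ \bQ \subseteq \bQ^n$ is $A$-invariant, and because the $h_i$ are irreducible in $\bZ[t]$, the characteristic polynomial of $A|_V$ must have the form $\prod_i h_i^{k_i}$ for integers $k_i \geq 0$, with constant term $\prod_i h_i(0)^{k_i} = \pm 1$. Hence if every $h_i(0) \ne \pm 1$, all $k_i$ must vanish, so $V = 0$ and $\Lambda = 0$, proving that $G_A$ is dense.

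Conversely, suppose $h_j(0) = \pm 1$ for some $j$. Applying Theorem \ref{th:trig} to the factorization $h_A = \bigl(\prod_{i \ne j} h_i\bigr) \cdot h_j$ and passing to a further $\GL_n(\bZ)$-conjugate, I may assume $A$ is block upper triangular with the block $A_j \in \GL_{n_j}(\bZ)$ occupying the bottom right. For any $\eta \in \bZ^{n_j}$ and any $k \geq 0$, the row vector $\chi := (0,\ldots,0,\eta) \in \bZ^n$ equals $\mu A^k$ with $\mu := (0,\ldots,0,\eta A_j^{-k})$; here $\mu \in \bZ^n$ because $A_j^{-k} \in \M_{n_j}(\bZ)$. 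Hence $\chi \in \Lambda$, so $\Lambda \ne 0$ and $G_A$ is not dense. The main obstacle is packaging the Pontryagin-duality reformulation and verifying that the intersection $\Lambda$ is $A$-stable; once these are in hand, the factorization of $h_A$ dictates everything, and the explicit construction in the converse direction is essentially forced by the bottom-block structure.
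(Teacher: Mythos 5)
Your proposal is correct and, for the substantive direction (``all $h_i(0)\ne\pm 1$ implies density''), takes a genuinely different route from the paper. The paper's proof invokes \cite[Lemma 8.4]{s} to reduce density to triviality of $\{\mathbf{y}\in\bZ^n:A^{-i}\mathbf{y}\in\bZ^n\ \forall i\}$, then argues by contrapositive via eigenvectors and Galois theory: in the irreducible case, a nonzero $\mathbf{y}$ with $A^{-i}\mathbf{y}\in\bZ^n$ for all $i$ is expanded in an eigenbasis, and the resulting divisibility forces one eigenvalue (hence, by Galois transitivity, every eigenvalue) to be a unit in $\cO_K$, so $\det A=\pm 1$; the reducible case is then handled by a separate induction on the number of irreducible factors. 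Your argument instead reformulates density from first principles as triviality of the annihilator lattice $\Lambda=\bigcap_{k\geq 0}\bZ^n A^k$, notes that $\Lambda A=\Lambda$ so that right multiplication by $A$ is a $\bZ$-automorphism of $\Lambda$ with determinant $\pm 1$, and uses that the characteristic polynomial of the restriction divides $h_A$, hence equals $\prod_i h_i^{k_i}$ with constant term $\prod_i h_i(0)^{k_i}=\pm 1$; when each $|h_i(0)|>1$ this forces all $k_i=0$, so $\Lambda=0$. This is cleaner: it avoids eigenvectors, Galois transitivity, and the case split between irreducible and reducible characteristic polynomial, and it is self-contained rather than citing \cite{s} for the density criterion. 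The converse direction is essentially the paper's explicit construction, transposed to row vectors with the invertible block at the bottom right; you are right to flag that this placement is obtained by applying the iterative construction of Theorem \ref{th:trig} with $h_j$ chosen last, since the theorem by itself does not specify the ordering of the diagonal blocks.
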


\begin{proof}
As in the proof of Lemma 8.4 in \cite{s}, $G_A$ is dense in $\bQ^n$ if and 
only if 
\bbe\label{eq:y}
A^{-i}{\bf y}\in\bZ^n\text{ for any }i\in\bN,\quad {\bf y}\in\bZ^n,
\ee
implies ${\bf y}={\bf 0}$. We first show that if there exists $A_i\in\GL_{n_i}(\bZ)$, then $G_A$ is not dense. Indeed, without loss of generality, we can assume that $A$ itself has the block upper-triangular form \eqref{eq:kkkk1} and that $A_1\in\GL_{n_1}(\bZ)$. Then for any ${\bf y}_0\in\bZ^{n_1}$ and $i\in\bN$, $A_1^{-i}{\bf y}_0\in\bZ^{n_1}$, so that there exists non-zero ${\bf y}=\left(\begin{matrix}
{\bf y}_0 & {\bf 0}
\end{matrix}
\right)^t\in\bZ^n$ satisfying \eqref{eq:y}, and $G_A$ is not dense. 

\sbr

We are now left to show that if $G_A$ is not dense, then there exists $A_i\in\GL_{n_i}(\bZ)$.
We first consider the case when $h_A$ is irreducible. Assume $G_A$ is not dense, hence there exists ${\bf y}\ne{\bf 0}$ satisfying \eqref{eq:y}. 
Note that $A$ is diagonalizable with eigenvectors ${\bf u}_1,\ldots,{\bf u}_n\in\bC^n$, linearly independent over 
$\bC$, corresponding to eigenvectors $\la_1,\ldots,\la_n\in\bC$, respectively. 
Let $M=\left(\begin{matrix}
{\bf u}_1 & \ldots & {\bf u}_n
\end{matrix}
\right)\in\GL_n(\bC)$.
Let $K$ be a finite Galois extension of $\bQ$ that contains all the eigenvalues of $A$ and let 
$\cO_K$ denote its ring of integers. Without loss of generality, we can assume that $M\in\M_n(\cO_K)$, so that $\det M\in\cO_K-\{0\}$. Let ${\bf y}\in\bZ^n$ satisfy \eqref{eq:y}, ${\bf y}=\sum_{j=1}^nc_j{\bf u}_j$, $c_1,\ldots,c_n\in K$, not all are zeroes. Then \eqref{eq:y} implies
$$
A^{-i}{\bf y}=M\cdot\left(\begin{matrix}
c_{1}\la_1^{-i} & c_{2}\la_2^{-i}  & \ldots & c_{n}\la_n^{-i}\end{matrix}
\right)^t\in\bZ^n.
$$
Thus, multiplying the last formula (on the left)  by the adjoint matrix $\tilde M\in\M_n(\cO_K)$ of $M$, we have
\bbe\label{eq:oko}
\det Mc_j\la_j^{-i}\in\cO_K\text{ for any }i\in\bN\text{ and }j\in\{1,2,\ldots,s\}.
\ee
Since there exists $c_k\ne 0$ for some $k\in\{1,\ldots,n\}$ and $\det M\ne 0$, we have $\la_k\in\cO_K^{\times}$, {\it i.e.}, $\la_k$ is a unit in $\cO_K$. Indeed, otherwise there exists a prime ideal $\fp$ of $\cO_K$ dividing $\la_k$. Then, writing, $c_k=\ga_k/\de_k$, $\ga_k,\de_k\in\cO_K-\{0\}$, from \eqref{eq:oko} for $j=k$ we get that non-zero $\det M\ga_k\in\cO_K$ (which does not depend on $i$) is divisible by arbitrary powers $\fp^i$, $i\in\bN$, which is impossible. Since $h_A$ is irreducible by assumption, $\Gal(\overline{\bQ}/\bQ)$ acts transitively on the set of eigenvalues of $A$. Thus, since there is one eigenvalue $\la_k\in\cO_K^{\times}$, all the eigenvalues of $A$ are units in $\cO_K$ and their product $\la_1\la_2\cdots \la_n=\det A$ is a unit in $\bZ$, {\it i.e.}, $\det A=\pm 1$ and $A\in\GL_n(\bZ)$.

\sbr

We now assume that $h_A$ is not irreducible. We need to show that if $G_A$ is not dense, then there exists $A_i\in\GL_{n_i}(\bZ)$. Equivalently, if all 
$A_i\not\in\GL_{n_i}(\bZ)$, then $G_A$ is dense. Assume all 
$A_i\not\in\GL_{n_i}(\bZ)$.
We prove that this implies that $G_A$ is dense by induction on the number of irreducible components of $h_A$; the base of the induction (the case of one irreducible component) is considered in the preceding paragraph. Let $h_A=h_1h_2$, where $h_1,h_2\in\bZ[t]$ are monic polynomials of degrees $n_1,n_2\in\bN$, respectively. By
Theorem \ref{th:trig} below, there exists $T\in\GL_n(\bZ)$ such that
\bbe\label{eq:kkkk}
TAT^{-1}=\left(\begin{matrix}
A_{1} & *   \\
0 & A_{2} 
\end{matrix}
\right),
\ee
where each $A_i\in\M_{n_i}(\bZ)$ has characteristic polynomial $h_i$, $i=1,2$.
Without loss of generality, we can assume that $A$ itself has the block triangular form \eqref{eq:kkkk}. Clearly,
$C\not\in\GL(\bZ)$ for any ``irreducible" block $C$ of $A_1$, $A_2$. Then, by induction, $G_{A_i}$ is dense in $\bQ^{n_i}$, $i=1,2$. Namely, if ${\bf y}\in\bZ^n$ satisfies \eqref{eq:y} and ${\bf y}=\left(\begin{matrix}
{\bf y}_1 & {\bf y}_2
\end{matrix}
\right)^t$, ${\bf y}_i\in\bZ^{n_i}$, $i=1,2$, then $A_2^{-i}{\bf y}_2\in\bZ^{n_2}$
for all $i\in\bN$, and hence ${\bf y}_2={\bf 0}$ by induction. Then, \eqref{eq:y} implies $A_1^{-i}{\bf y}_1\in\bZ^{n_1}$
for all $i\in\bN$, and hence ${\bf y}_1={\bf 0}$ by induction as well. Thus, 
${\bf y}={\bf 0}$ and $G_{A}$ is dense in $\bQ^{n}$.

\sbr

The other two equivalent formulations follow from the facts that $A\in\M_n(\bZ)$ belongs to
$\GL_n(\bZ)$ if and only if $\det A=\pm 1$ and if $h\in\bZ[t]$ is the characteristic polynomial of $A$, 
then $\det A=(-1)^nh(0)$.
\end{proof}

\begin{lem}\label{l:orbit}
Let $A,B\in\M_n(\bZ)$ be non-singular such that 
$G_A$ $($resp., $G_B$$)$ is dense in $\bQ^n$ $($see Lemma $\ref{l:dense})$. Then $\bZ^n$-actions $Y_A$, $Y_B$ are orbit equivalent if and only if $\det A,\det B$ have the same prime divisors.
\end{lem}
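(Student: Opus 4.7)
The plan is to extend the argument of \cite[Lemma 8.5]{s} (the $n=2$ case) to general $n$, using Lemma \ref{l:minmax} to describe $Y_A$ explicitly, and then invoking the orbit-equivalence classification for $\bZ^n$-odometers.

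\emph{Freeness and minimality.} First I would check that, under the density hypothesis, the $\bZ^n$-actions on $Y_A$ and $Y_B$ are free, minimal, and equicontinuous. Density of $G_A$ in $\bQ^n$ (which, by Lemma \ref{l:dense}, holds iff no irreducible block $A_i$ in the block-triangular form of $A$ lies in $\GL_{n_i}(\bZ)$) is equivalent, since $\bZ^n\subseteq G_A$, to density of the image of $\rho: G_A/\bZ^n\hookrightarrow \bT^n$. Pontryagin duality then forces $\widehat\rho : \bZ^n \to Y_A = \widehat{G_A/\bZ^n}$ to be injective with dense image, giving a free minimal equicontinuous $\bZ^n$-action on $Y_A$. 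The same applies to $Y_B$.

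\emph{Explicit profinite structure.} By Lemma \ref{l:minmax},
$$
G_A/\bZ^n \;\cong\; \bigoplus_{p\,\in\,\cP(A)} \bZ(p^\infty)^{t_p(A)},
$$
so Pontryagin dualizing yields a topological group isomorphism
$$
Y_A \;\cong\; \prod_{p\,\in\,\cP(A)} \bZ_p^{t_p(A)},
$$
and analogously for $Y_B$. Thus both systems are free $\bZ^n$-odometers whose underlying profinite groups are concrete products of $p$-adic integers, with ``set of primes'' equal to $\cP(A)$ respectively $\cP(B)$, and with the $\bZ^n$-action given by $\widehat\rho$.

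\emph{Orbit-equivalence classification.} The crucial step is to invoke the classification of orbit equivalence for free minimal equicontinuous Cantor $\bZ^n$-actions (building on the Giordano-Putnam-Skau theorem, its $\bZ^n$-generalization by Giordano-Matui-Putnam-Skau, and the supernatural-number invariant for profinite odometers): two such systems $\prod_p\bZ_p^{s_p}$ and $\prod_p\bZ_p^{t_p}$ are orbit equivalent iff $\{p\,:\,s_p>0\}=\{p\,:\,t_p>0\}$, i.e.\ the multiplicities $t_p$ themselves are orbit-equivalence invisible. Combined with the previous step this gives $Y_A\sim_{OE}Y_B$ iff $\cP(A)=\cP(B)$, which by definition of $\cP$ is the statement that $\det A$ and $\det B$ share the same prime divisors in $\bZ$.

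\emph{Main obstacle.} The substantive content is the last step, and in particular the fact that orbit equivalence of $\bZ^n$-odometers depends only on the prime support, not on the multiplicities $t_p$. For $n=2$ this is \cite[Lemma 8.5]{s}; the higher $n$ case either requires a careful transcription of that argument or a direct citation of the orbit-equivalence theory for free minimal Cantor $\bZ^n$-systems. The remaining ingredients (Pontryagin duality, freeness, the identification of $Y_A$ as $\prod_p\bZ_p^{t_p(A)}$) are bookkeeping once Lemma \ref{l:minmax} and Lemma \ref{l:dense} are in hand.
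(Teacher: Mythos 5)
Your proposal is correct and follows essentially the same approach as the paper, whose proof simply cites \cite[Theorem 1.5]{gps} together with \cite[Lemma 8.5]{s}; your expansion of the bookkeeping (density gives freeness/minimality, Lemma \ref{l:minmax} plus Pontryagin duality gives $Y_A\cong\prod_p\bZ_p^{t_p(A)}$) matches the intended argument. The specific reference you were looking for in the final step is \cite[Theorem 1.5]{gps}, the $\bZ^d$-odometer classification of Giordano--Putnam--Skau (2019), rather than the Giordano--Matui--Putnam--Skau paper on general Cantor minimal $\bZ^d$-systems that you name.
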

\begin{proof}
Follows from \cite[Theorem 1.5]{gps} and \cite[Lemma 8.5]{s}.
\end{proof}

In \cite[Theorem 1.5]{gps}, the authors give a characterization of various equivalences of $\bZ^2$-odometers $Y_H$ in terms of the corresponding groups $H$. In our subsequent paper, we extend their results to the $n$-dimensional case of $\bZ^n$-odometers and apply them for odometers of the form $Y_A$ defined by non-singular matrices $A\in\M_n(\bZ)$.


\section{Similarity to a block-triangular matrix over PID}\label{ap:trig}
In this section we give a proof of the fact that a matrix $A$ over a principal ideal domain $R$ with field of fractions of characteristic zero is similar over $R$ to a block-triangular matrix. 
This is proved in \cite[p. 50, Thm. III.12]{n} for $R=\bZ$ and the same proof works for a general principal ideal domain (PID) with field of fractions of characteristic zero. In particular, when $R=\bZ_p$, the case of our interest. We repeat the 
proof here with a slight modification, which is useful in  calculating examples.
\begin{thm}\label{th:trig}
Let $R$ be a PID with field of fractions of characteristic zero. For any $A\in\M_n(R)$ there exists $S\in\GL_n(R)$ such that
$$
SAS^{-1}=\left(\begin{matrix}
A_{11} & *  & \cdots & * \\
0 & A_{22}  & \cdots & * \\
\vdots & \vdots &  &\vdots \\
0 & 0  & \cdots & A_{tt}
\end{matrix}
\right),
$$
where each $A_{ii}$ is a square matrix with irreducible characteristic polynomial, $i\in\{1,2,\ldots,t\}$, $1\leq t\leq n$.
\end{thm}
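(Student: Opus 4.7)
The plan is to proceed by induction on $n$. The base case $n = 1$ is immediate since $x - a$ is irreducible in $R[x]$ for any $a \in R$. For the inductive step, the strategy is to exhibit a pure, $A$-invariant $R$-submodule $V^* \subseteq R^n$ of some rank $d \geq 1$ on which $A$ acts with \emph{irreducible} characteristic polynomial of degree $d$. Purity will force the quotient $R^n / V^*$ to be a finitely generated torsion-free module over the PID $R$, hence free of rank $n - d$; consequently $R^n = V^* \oplus W$ as $R$-modules (not as $A$-modules in general). In any basis adapted to this decomposition, $A$ takes the block form
$$
\begin{pmatrix} A_{11} & * \\ 0 & A' \end{pmatrix},
$$
with $A_{11} \in \M_d(R)$ the restriction $A|_{V^*}$ (whose characteristic polynomial is the chosen irreducible factor) and $A' \in \M_{n-d}(R)$ the induced operator on the free quotient $R^n/V^*$. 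Applying the inductive hypothesis to $A'$ then yields the full block-triangular form.

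To construct $V^*$: let $K$ be the field of fractions of $R$ and factor $h_A = f \cdot g$ in $R[x]$ with $f$ irreducible of degree $d$ (Gauss's lemma ensures $R[x]$ is a UFD). Because $f$ and $h_A$ share irreducible factors with the minimal polynomial $m_A$, we have $f \mid m_A$, so $f(A)$ is not injective on $K^n$. Pick a nonzero $w \in K^n$ with $f(A)w = 0$ and clear denominators to obtain $v \in R^n \setminus \{0\}$ with $f(A)v = 0$. The minimal polynomial $m_v$ of $v$ relative to $A$ divides $f$ and is nonconstant, so by irreducibility $m_v = f$. Hence $V := R[A]v$ is a free cyclic $R$-submodule of rank $d$ with basis $v, Av, \ldots, A^{d-1}v$, and $A|_V$ has characteristic polynomial exactly $f$ (as minimal and characteristic polynomials coincide for cyclic operators). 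Now let $V^* := (V \otimes_R K) \cap R^n$ be the pure closure of $V$ in $R^n$; then $V^* \otimes_R K = V \otimes_R K$, so $A|_{V^*}$ retains characteristic polynomial $f$.

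The main technical points to check will be that $V^*$ is indeed $A$-invariant and pure in $R^n$, and that consequently $R^n = V^* \oplus W$ as $R$-modules. Both rest on the torsion-freeness of $R^n$: $A$-invariance follows because $V \otimes_R K$ is $A$-invariant and $A$ maps $R^n$ into itself, while purity follows because $V \otimes_R K$ is a $K$-subspace. Freeness of the quotient is then immediate from the structure theorem for finitely generated modules over the PID $R$. The identification of the restricted characteristic polynomial with the irreducible $f$, rather than a proper power, is the most delicate ingredient and is secured by working with a cyclic submodule whose minimal polynomial is already $f$ before taking the pure closure. Once these verifications are in place, the induction on $n$ closes the argument.
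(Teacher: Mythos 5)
Your proof is correct and takes a genuinely different, cleaner route than the paper's. The paper works with an actual eigenvector ${\bf u}\in R[\al]^n$ living over the field extension $L = F(\al)$, expands it against the power basis $\{1,\al,\ldots,\al^{k-1}\}$ to get a coefficient matrix $C\in\M_{n\times k}(R)$ satisfying $AC = CB$, and then runs Smith normal form on $C$ to explicitly produce the conjugating matrix $U\in\GL_n(R)$; verifying that $\al$ is an eigenvalue of the upper-left block is then needed to confirm $k = r$. Your argument stays entirely over the base field of fractions $K$: you take $v\in R^n$ in $\ker f(A)$, observe that the cyclic submodule $V=R[A]v$ is free of rank $d=\deg f$ with $A|_V$ given by the companion matrix of $f$, and then pass to the pure closure $V^* = (V\otimes_R K)\cap R^n$, whose $A$-invariance, purity, and unchanged characteristic polynomial all follow abstractly from torsion-freeness and the structure theorem over a PID. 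The trade-off: your argument is conceptually lighter and avoids both the field extension and the explicit Smith computation, while the paper's more computational proof doubles as an algorithm (the paper in fact invokes it for that purpose, e.g. to construct the matrices $W_p$ in Lemma \ref{l:repr}). One very small point: your justification ``$f$ and $h_A$ share irreducible factors with $m_A$'' is awkwardly phrased; what you mean, and what is correct, is that $m_A$ and $h_A$ have the same irreducible factors over $K$, so the irreducible factor $f$ of $h_A$ divides $m_A$ and hence $\ker f(A)\neq 0$ in $K^n$.
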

\begin{proof}
Let $F$ denote the field of fractions of $R$ and let $h_A\in R[t]$ denote the characteristic polynomial of $A$. If $h_A$ is irreducible, there is nothing to prove. Assume $h_A$ is not irreducible, {\it i.e.}, $h_A=h_1h_2$, where $h_1,h_2\in R[t]$ are monic, and $h_1$  
is irreducible of degree $k$, $1\leq k< n$. Let $\overline{F}$ denote a fixed algebraic closure of $F$, let $\al\in\overline{F}$ be a root of $h_1$, and let 
$L=F(\al)$. Then $L$ is a finite separable extension of $F$ of degree $k$. It is well-known that $L$ is the field of fractions of $R[\al]$. 
Let ${\bf u}\in (\overline{F})^n$ be an eigenvector of $A$ corresponding to $\al$. Without loss of generality, we can assume that ${\bf u}\in R[\al]^n$. Then
$$
{\bf u}=C{\bf\om},\quad {\bf\om}=\left(\begin{matrix} 1 &\al & \ldots & \al^
{k-1} \end{matrix}\right)^t
$$
for some $C\in\M_{n\times k}(R)$. Also, there exists $B\in\M_{k}(R)$ such that $\al{\bf\om}=B{\bf \om}$. Then
$$
A{\bf u}=AC{\bf \om}=\al C{\bf\om}=CB{\bf \om}
$$
and hence $AC=CB$, since
entries of $AC-CB$ belong to $R$ and 
$1,\al,\ldots,\al^{k-1}$ is a basis of $L$ over $F$. Since $R$ is a PID, matrix $C$ has a Smith normal form, {\it i.e.}, there exist $\la_1,\ldots,\la_r\in R-\{0\}$, $U\in\GL_n(R)$, and $V\in\GL_k(R)$ such that
$$
C=UTV,\quad T=\left(\begin{matrix}
\La & 0 \\
0 & 0
\end{matrix}
\right),
$$
where $T\in\M_{n\times k}(R)$, $\La=\diag(\la_1,\ldots,\la_r)$ is a non-singular diagonal matrix, and $r\leq k$. We write
$$
U^{-1}AU=\left(\begin{matrix}
A_1 & A_2 \\
A_3 & A_4
\end{matrix}
\right),
$$
where $A_1\in\M_r(R)$, and $A_2,A_3,A_4$ are matrices over $R$ of appropriate sizes. It follows from $AC=CB$ that
\bbe\label{eq:last}
\left(\begin{matrix}
A_1 & A_2 \\
A_3 & A_4
\end{matrix}
\right)\left(\begin{matrix}
\La & 0 \\
0 & 0
\end{matrix}
\right)V=
\left(\begin{matrix}
\La & 0 \\
0 & 0
\end{matrix}
\right)VB.
\ee
Thus, $A_3\La=0$ and since $\La$ is non-singular, we have $A_3=0$. We now show that $\al$ is an eigenvalue of $A_1$ and hence $k=r$. Indeed, multiplying \eqref{eq:last} by ${\bf \om}$ on the right, we get
\bbe\label{eq:lkl}
\left(\begin{matrix}
A_1 & A_2 \\
A_3 & A_4
\end{matrix}
\right)\left(\begin{matrix}
\La & 0 \\
0 & 0
\end{matrix}
\right)V{\bf \om}=
\left(\begin{matrix}
\La & 0 \\
0 & 0
\end{matrix}
\right)VB{\bf \om}=
\al\left(\begin{matrix}
\La & 0 \\
0 & 0
\end{matrix}
\right)V{\bf \om},
\ee
since $B{\bf \om}=\al{\bf\om}$.
Let ${\bf v}\in\M_{r\times 1}(L)$ denote the first $r$ entries of $V{\bf\om}\in\M_{k\times 1}(L)$ and let ${\bf w}=\La{\bf v}$. Note that ${\bf v}$ is non-zero, since ${\bf \om}$ is a basis and $V$ is non-singular. Also, ${\bf w}$ is non-zero, since $\La=\diag(\la_1,\ldots,\la_r)$ is non-singular.
Then \eqref{eq:lkl} implies 
$$
A_1{\bf w}=\al{\bf w}.
$$
Since ${\bf w}$ is non-zero, $\al$ is an eigenvalue of $A_1$. Hence, $k=r$, 
$h_1$ is the characteristic polynomial of $A_1$, and $h_2$ is the characteristic polynomial of $A_4$. Applying the induction process on $n$, the statement of the theorem holds for $A_4\in\M_{n-k}(R)$ and therefore, holds for $A$. 
\end{proof}

\end{document}